\documentclass[11pt]{amsart}
\usepackage[active]{srcltx}
\usepackage{xifthen}
\usepackage[all]{xypic}
\usepackage{tikz}
\usetikzlibrary{patterns}
\usepackage{hyperref}
\usepackage{enumitem}
\setlist[enumerate]{label = $(\alph*)$, leftmargin = *}
\usepackage{amssymb}
\usepackage{amsmath}
\usepackage{amsthm}
\usepackage[mathcal]{euscript}
\numberwithin{equation}{section}

\newtheorem{thm}{Theorem}[section]
\newtheorem{cor}[thm]{Corollary}
\newtheorem{lem}[thm]{Lemma}
\newtheorem{prop}[thm]{Proposition}
\theoremstyle{definition}
\newtheorem{deff}[thm]{Definition}
\newtheorem{rem}[thm]{Remark}

\newsavebox\ideabox
\newenvironment{idea}
{\begin{equation}\tag{Q}
    \begin{lrbox}{\ideabox}
      \begin{minipage}{\dimexpr\columnwidth-2\leftmargini}
        \setlength{\leftmargini}{0pt}%
        \begin{quote}}
        {\end{quote}
      \end{minipage}
    \end{lrbox}\makebox[0pt]{\usebox{\ideabox}}
  \end{equation}}
\makeatletter
\def\namedlabel#1#2{\begingroup
   \def\@currentlabel{#2}%
   \label{#1}\endgroup
 }
 
\newcommand{\DRH}{{\sf DRH}}
\newcommand{\h}{{\sf H}}
\newcommand{\W}{{\sf W}}
\newcommand{\V}{{\sf V}}
\newcommand{\A}{{\sf A}}
\newcommand{\G}{{\sf G}}
\newcommand{\R}{{\sf R}}
\newcommand{\Sl}{{{\sf Sl}}}
\newcommand{\s}{{{\sf S}}}
\newcommand{\Ab}{{{\sf Ab}}}
\newcommand{\DO}{{{\sf DO}}}

\newcommand{\pseudo}[2]{\overline{\Omega}_{#1} {\sf{#2}}}
\newcommand{\pseudosig}[2]{\Omega^{\sigma}_{#1} {\sf{#2}}}

\newcommand{\lbf}[2][]{{\sf{lbf}}\ifthenelse{\isempty{#1}}{}{_{#1}}(#2)}
\newcommand{\cum}[1]{\vec{c}(#1)}
\newcommand{\card}[1]{\left\lvert#1\right\rvert}
\newcommand\malcev{\mathop{\raise0.5pt\hbox{\footnotesize$\bigcirc$\kern-8.5pt\raise0.5pt\hbox{\tiny$m$}\kern2pt}}}

\newcommand{\Req}{\mathrel{\mathcal R}}
\newcommand{\Deq}{\mathrel{\mathcal D}}
\newcommand{\Heq}{\mathrel{\mathcal H}}

\newcommand{\iC}{\mathcal{C}}
\newcommand{\iS}{\mathcal{S}}

\begin{document}
\title[Reducibility properties for $\DRH$]{Some reducibility properties for pseudovarieties of the
  form~$\DRH$}
\author{C\'elia Borlido}
\address{Centro de Matem\'atica e Departamento de Matem\'atica, Faculdade de Ci\^encias,
Universidade do Porto, Rua do Campo Alegre, 687, 4169-007 Porto,
Portugal}
\email{cborlido@fc.up.pt}
\thanks{2010 Mathematics Subject Classification. Primary 20M07, Secondary 20M05.\\
Keywords and phrases: pseudovariety, $\Req$-class, pointlike equation,
idempotent pointlike equation, graph equation, reducibility.
}
\begin{abstract}
  Let $\h$ be a pseudovariety of groups and $\DRH$ be the pseudovariety
  containing all finite semigroups whose regular $\Req$-classes belong to $\h$.
  We study the relationship between reducibility of~$\h$ and of $\DRH$ with
  respect to several particular classes of systems of equations.
  The classes of systems considered (of pointlike, idempotent pointlike
  and graph equations) are known to play a role in decidability
  questions concerning pseudovarieties of the forms $\V * \W$, $\V
  \vee \W$, and~$\V \malcev \W$.
\end{abstract}
\maketitle
\section{Introduction}
The interest in studying pseudovarieties of semigroups is, in part,
justified by Eilenberg's
correspondence~\cite{MR0530383}, which establishes a
bijection between pseudovarieties of
finite semigroups and varieties of rational languages.
Also, rational languages are a very important object in
Theoretical Computer Science, as they correspond to the languages
recognized by finite state automata.

In turn, pseudovarieties are quite often described as a result of
applying certain operators on pairs of other pseudovarieties, such as the
semidirect product $*$, the join $\vee$, and the Mal'cev product
$\malcev$.
Therefore, it is a natural question to ask whether pseudovarieties of
the form $\V*\W$, $\V\vee\W$, or $\V \malcev \W$ are
\emph{decidable} (meaning that they have a decidable \emph{membership
  problem}).
It is known that $\V$ and $\W$ being decidable is not enough
to have decidability of any of those pseudovarieties~\cite{MR1150933,MR1723477}.
It was the search for sufficient conditions to preserve decidability
under the operator $*$ that led to the definition of
\emph{hyperdecidability}, a stronger notion of decidability~\cite{hyperdecidable}.
Shortly after, the notion of \emph{tameness}~\cite{steinbergJA,MR1750489}
emerged as a method of establishing hyperdecidability of
pseudovarieties.
Briefly, it may be described in two steps:
\emph{decidability of the word problem} and \emph{reducibility}.
Some other variants of stronger versions of decidability may be
found in the literature~(see~\cite{unified_theory} for an overview).

It is also worth mentioning that a particular instance of
hyperdecidability, known as \emph{strong
decidability}, was already considered for several years
under the name of \emph{computable pointlike sets}.
For instance, in 1988 Henckell~\cite{MR968571}  proved that aperiodic
semigroups have
computable pointlike sets or, in other words, that the pseudovariety
$\A$ of aperiodic finite semigroups is strongly decidable.
This study was conducted to produce progress in the question of
decidability of the Krohn-Rhodes complexity for semigroups~\cite{MR0236294}.
Along the same line, Ash~\cite{MR1232670} introduced \emph{inevitable sequences}
in a finite
monoid (for finite groups) in order to prove the Rhodes type II
conjecture~\cite{MR645703}.
Deciding whether a sequence $(s_1, \ldots, s_n)$ from a finite
monoid is inevitable in Ash's sense translates to hyperdecidability of the
pseudovariety~$\G$ of finite groups with respect to the
equation~$x_1\cdots x_n = 1$.
Also, Pin
and Weil~\cite{pinWEIL} described a defining set of identities for a
Mal'cev product, which in turn implies that the decidability of idempotent
pointlike sets may be used as a sufficient condition for decidability of
Mal'cev products of pseudovarieties.
The diversity of motivations behind these works somehow indicates that
hyperdecidability may lead the way to a better understanding of
the structure of finite semigroups.
Indeed, many researchers have shown interest in studying strong
versions of decidability for pseudovarieties (see, for instance,
\cite{MR1859280,JA,reducibility,MR2142087,MR1611659,MR1112302,2007arXiv0706.0248H,MR1818661}).

On the other hand, the pseudovarieties of the form~$\DRH$ have already
been considered in the literature.
In the mid seventies, Sch\"utzenberger~\cite{MR0444824} identified the associated
varieties of rational languages under Eilenberg's correspondence.
Also, more recently, a study on the structure of the free pro-$\DRH$
semigroup was carried out by Almeida and Weil \cite{drh}.
Pseudovarieties of the form~$\DRH$ are the object of our study, in which we 
answer the following question:

\begin{idea}\namedlabel{q1}{Q}
  Given an implicit signature~$\sigma$, what conditions on a
  pseudovariety of groups $\h$
  guarantee that the pseudovariety $\DRH$ is \emph{$\sigma$-reducible with
  respect to a given class~$\iC$ of finite systems of equations} (to be
  precisely described in Subsection~\ref{sec:1})?
\end{idea}

\noindent The classes~$\iC$ considered are precisely those related with
the decidability problems mentioned above. More precisely, we consider
systems of \emph{pointlike equations} ($x_1 = \cdots = x_n$), of \emph{graph
  equations} (equations arising from finite graphs by assigning to
each edge  $x \xrightarrow y z$ the
equation $xy = z$), and of \emph{idempotent
pointlike equations} ($x_1 = \cdots = x_n = x_n^2$).

The paper is organized as follows.
We devote Section \ref{sec:1002} to an overview of
results in the literature that we use in the rest of the
paper.
In particular, in Subsection~\ref{sec:1} we expose
some concepts and results concerning decidability.
The subsequent sections focus on pointlike, graph, and idempotent
pointlike equations, in this order.
We prove in Section \ref{sec:1003} that $\h$ being
$\sigma$-reducible with respect to systems of pointlike equations,
suffices for $\DRH$ to enjoy the same property.
That result is achieved by considering a certain periodicity
phenomenon on the constraints.
Then, in Section \ref{sec:1004}, we study systems of graph equations.
We prove that $\h$ is $\sigma$-reducible with respect to systems of
graph equations if and only if so is $\DRH$.
For that purpose, we borrow from~\cite{Rtame} the notion of \emph{splitting
  point} considered in the setting of the pseudovariety $\R$.
Finally, in the last section, we prove that if $\h$ is
$\sigma$-reducible with respect to systems of graph equations, then
$\DRH$ is $\sigma$-reducible with respect to systems of idempotent
pointlike equations.
The techniques used are somehow similar to the ones used in Section
\ref{sec:1002}.

\section{Preliminaries}\label{sec:1002}

We assume that the reader is familiar with the theory of finite and profinite
semigroups. We refer to~\cite{livro,profinite} for this topic.
For the basics concepts and results on topology, the reader is
referred to~\cite{topologia}.
\subsection{General definitions and notation}

In the sequel, $\V$ and $\W$ stand for arbitrary pseudovarieties of
semigroups, while $\h$ stands for an arbitrary pseudovariety of groups.
We list below the pseudovarieties mentioned in this paper.
\begin{itemize}[label =]
\item $\s$ consists of all finite semigroups;
\item $\Sl$ consists of all finite semilattices;
\item $\G$ consists of all finite groups;
\item $\Ab$ consists of all finite Abelian groups;
\item $\G_p$ consists of all finite $p$-groups (for a prime number $p$);
\item $\G_{sol}$ consists of all finite solvable groups;
\item $\R$ consists of all finite $\Req$-trivial semigroups;
\item $\DRH$ consists of all finite semigroups whose regular
  $\Req$-classes are groups of $\h$;
\item $\DO$ consists of all finite semigroups whose regular
  $\Deq$-classes are orthodox semigroups;
\item $\overline \h$ consists of all finite semigroups whose subgroups
  belong to $\h$.
\end{itemize}

Let $A$ be a finite alphabet. The free $A$-generated pro-$\V$ 
semigroup is denoted $\pseudo AV$.
Whenever $\V$ is not the trivial pseudovariety, it is usual to
identify $A$ with its image under the generating
mapping of $\pseudo AV$, so that the free semigroup $A^+$ is a
subsemigroup of~$\pseudo AV$.
For a subpseudovariety~$\W$ of~$\V$, we represent by~$\rho_\W$
the canonical projection from~$\V$ onto~$\W$, should~$\V$ be clear
from the context.
When $\Sl \subseteq \V$, we denote $\rho_\Sl$ by $c$ and call it the
\emph{content function}.
An \emph{implicit signature} is a set of pseudowords generically
denoted~$\sigma$.
Each pseudoword may be naturally seen as an implicit
operation~\cite[Theorem~4.2]{profinite}.
Hence, each profinite semigroup is endowed with a structure of
$\sigma$-algebra.
We denote by $\pseudosig AV$ the free $A$-generated semigroup
over~$\V$.
Further, we let $\langle\sigma\rangle$ denote the implicit signature
obtained from $\sigma$ through composition of its elements (see
\cite[Proposition~4.7]{profinite}).
The implicit operations corresponding to the elements of $A^+$ are called
\emph{explicit operations}.
The $\omega$-power is the implicit operation~$x^\omega$ that assigns to each
element~$s$ of a finite semigroup the unique idempotent that is a
power of~$s$.
It plays a distinguished role in this paper.
We call \emph{pseudowords over $\V$} (or simply pseudowords, when~$\V =
\s$) the elements of $\pseudo AV$, and \emph{$\sigma$-words over $\V$}
(or simply $\sigma$-words, when~$\V =
\s$) the
elements of $\pseudosig AV$.

If $S$ is a semigroup, then we represent by $S^I$ the monoid with
subsemigroup~$S$, identity~$I$, and underlying set given by $S \uplus \{I\}$.
Based on the identification $A^+ \subseteq \pseudo AV$, we sometimes
call \emph{empty word} the identity element $I \in (\pseudo AV)^I$. We further set $c(I)
= \emptyset$.

Given a formal equality of pseudowords $u = v$, also called
\emph{pseudoidentity}, we write $u =_\V v$ if the interpretations of
$u$ and $v$ coincide on every semigroup of~$\V$. Note that this is
equivalent to having $\rho_\V(u) = \rho_\V(v)$.
All the expressions \emph{$u = v$ modulo~$\V$}, \emph{$\V$ satisfies
  $u = v$}, and \emph{$u = v$ holds in~$\V$} mean that $u =_\V v$.
\subsection{The pseudovariety $\DRH$}

For a complete study of pseudovarieties of the form~$\DRH$, the reader
is referred to~\cite{drh}.
We proceed with the statement of some structural properties of the
free pro-$\DRH$ semigroup that we use later.

It is well known that for every element $u$ of $\pseudo AS$
(respectively, of $\pseudo A{DRH}$) there exists a unique
factorization $u = u_\ell a u_r$, with $u_\ell$ and $u_r$ possibly the
empty word, such that 
$c(u_\ell a)= c(u)$ and $a \notin c(u_\ell)$ (see, for instance, \cite[Proposition
2.1]{palavra} and \cite[Proposition 2.3.1]{drh}).
Such a factorization (both over $\s$ and over $\DRH$) is called the
\emph{left basic factorization of $u$}.

Let $u$ be either a pseudoword or a pseudoword over $\DRH$.
For each $k \ge 1$, we define $\lbf[k]u$ inductively as follows.
If $u = u_{1,\ell} a_1 u_{1,r}$ is the left basic factorization of $u$, then we
set $\lbf[1]u = u_\ell$. For $k > 1$, we set $\lbf[k] u = I$ if
$\lbf[k-1] u = I$, and we set $\lbf[k]u = u_{k-1,\ell}$ if the left
basic factorization of $\lbf[k-1]u$ is given by $\lbf[k-1]u = u_{k-1,
  \ell}a_{k-1}u_{k-1,r}$.
The \emph{cumulative content of $u$}, denoted $\cum u$, is the
ultimate value of the
sequence $(c(\lbf[k]u))_{k \ge 1}$. Observe that this sequence indeed
stabilizes since it forms a descending chain of subsets of some finite
set~$A$.

On the other hand, if we consider the iteration of the left basic
factorization to the leftmost factor, then we obtain uniqueness of the
so-called \emph{first-occurrences factorization}.
We state that fact for later reference.

\begin{lem}\label{c:5}
  Let $u$ be a pseudoword (respectively, a pseudoword
  over~$\DRH$).
  Then, there exists a unique factorization
  $u = a_1u_1 a_2u_2\cdots  a_nu_n$
  over $\s$ (respectively, over $\DRH$) such that $a_i \notin c(a_1u_1
  \cdots a_{i-1}u_{i-1})$, for  $i = 2, \ldots, n$, and $c(u) = \{a_1,
  \ldots, a_n\}$.
\end{lem}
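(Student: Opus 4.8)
The plan is to obtain the first-occurrences factorization by iterating the left basic factorization on the leftmost factor, and to derive uniqueness from the uniqueness of the left basic factorization together with an induction on the size of the content $c(u)$. Concretely, given $u$ (over $\s$ or over $\DRH$), apply the left basic factorization to write $u = u_\ell a_1 u_r$ with $c(u_\ell a_1) = c(u)$ and $a_1 \notin c(u_\ell)$. If $u_\ell = I$ we are at the base case, with $u = a_1 u_1$ where $u_1 = u_r$; note $c(u) = \{a_1\} \cup c(u_1)$ and, since $a_1 \notin c(u_\ell) = \emptyset$ is automatic, the content condition on $u$ forces $a_1$ to be the first letter. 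For the inductive step, apply the same procedure to the leftmost piece $u_\ell$: since $a_1 \notin c(u_\ell)$ and $c(u_\ell) \subsetneq c(u_\ell a_1) = c(u)$, the content strictly decreases, so by induction $u_\ell$ admits a first-occurrences factorization $u_\ell = a_1' v_1 \cdots a_m' v_m$ with the stated properties relative to $c(u_\ell)$. Splicing, $u = a_1' v_1 \cdots a_m' v_m \cdot a_1 \cdot u_r$, and one checks that the concatenated sequence of letters $a_1', \ldots, a_m', a_1$ satisfies the "new letter at each step" condition (for $a_1$ this is exactly $a_1 \notin c(u_\ell) = c(a_1' v_1 \cdots a_m' v_m)$), and that $c(u) = \{a_1', \ldots, a_m', a_1\}$ because $c(u_\ell a_1) = c(u)$. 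Relabelling $n = m+1$, $u_i = v_i$ for $i < n$, $a_n = a_1$, $u_n = u_r$ gives the desired factorization.

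For uniqueness, suppose $u = a_1 u_1 \cdots a_n u_n = b_1 w_1 \cdots b_k w_k$ are two such factorizations. The key observation is that, in either factorization, the prefix $a_2 u_2 \cdots a_n u_n$ (resp. $b_2 w_2 \cdots b_k w_k$) read off after the first letter is precisely the right factor of a \emph{left basic factorization} of $u$: indeed, if we set $x_r := a_2 u_2 \cdots a_n u_n$ and $x_\ell := a_1 u_1$, then $a_2 \notin c(x_\ell)$ would be needed — but that is not quite the left basic factorization of $u$; rather, one should argue that the \emph{first-occurrences} decomposition is obtained by peeling off initial segments, so the correct comparison is with $\lbf[\,\cdot\,]{u}$ applied repeatedly. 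More carefully: both $a_1$ and $b_1$ must equal the first letter of $u$ (the unique letter occurring in the shortest prefix, by the content condition on the index $i=2$ applied with the convention about $a_1$), so $a_1 = b_1$. Then one compares the ``tails'' $a_1 u_1$ and $b_1 w_1$ against the leftmost factor $u_\ell$ of the left basic factorization of $u$ with respect to the \emph{last} new letter, reducing to a factorization of a pseudoword with strictly smaller content and invoking the inductive hypothesis; cancellativity on the left in $(\pseudo AS)^I$ and in $(\pseudo A{DRH})^I$ for the relevant prefixes (which are explicit words) lets us strip equal prefixes. Uniqueness of the left basic factorization (cited in the excerpt) is what powers each step.

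The main obstacle I anticipate is bookkeeping the direction of the iteration: the left basic factorization isolates the \emph{last} first-occurrence letter at the far right of $u_\ell a$, whereas the first-occurrences factorization lists letters in order of \emph{first} appearance from the left. So the induction has to be set up so that peeling off $u_\ell a_1$ from the right of the ``prefix part'' corresponds to removing the last entry $a_n u_n$ of the target factorization, not the first. Getting this correspondence exactly right — and making sure the content conditions match up under the relabelling — is the only delicate point; everything else is a routine induction on $\card{c(u)}$ using the already-established uniqueness of the left basic factorization over $\s$ and over $\DRH$. I would therefore phrase the induction as: ``the factorization of $u$ with $c(u) = \{a_1,\dots,a_n\}$ has $a_n u_n$ equal to the right factor, and $a_1 u_1 \cdots a_{n-1} u_{n-1}$ equal to the left factor, of a left basic factorization of $u$ relative to the letter $a_n$'', then apply uniqueness of that factorization and the inductive hypothesis to $a_1 u_1 \cdots a_{n-1} u_{n-1}$.
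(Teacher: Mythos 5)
Your approach---iterate the left basic factorization to the leftmost factor and induct on $\card{c(u)}$---is exactly the one the paper indicates in the sentence preceding the lemma (the paper states the lemma without further proof), and the cleaned-up version in your final paragraph is correct. One small correction to the middle of your uniqueness discussion: no appeal to left cancellativity is needed, and the parenthetical ``which are explicit words'' is false since the $u_i$ are arbitrary pseudowords; all the work is done by the uniqueness of the left basic factorization, once one observes that $a_1u_1\cdots a_{n-1}u_{n-1}$, $a_n$, and $u_n$ must be, respectively, the left factor, the marker letter, and the right factor of the left basic factorization of $u$ (the defining conditions $c(a_1u_1\cdots a_{n-1}u_{n-1}\,a_n)=c(u)$ and $a_n\notin c(a_1u_1\cdots a_{n-1}u_{n-1})$ follow at once from the hypotheses).
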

We say that $ua$ is an \emph{end-marked pseudoword} provided $a \notin
\cum u$.
Also, the product~$uv$ is \emph{reduced} if~$v$ is nonempty and the first
letter of~$v$ (which is defined, by Lemma~\ref{c:5}) does not belong
to the cumulative content of~$u$.
The following result is used later.
\begin{prop}
  [{\cite[Proposition 4.8]{JA}}]
  \label{p:2}
  The set of all end-marked pseudowords over a finite alphabet
  constitutes a well-founded forest under the partial order
  $\le_{\Req}$.
\end{prop}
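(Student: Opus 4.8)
The plan is to verify the two defining properties in turn: that $\le_{\Req}$ restricts to a partial order on the set of end-marked pseudowords each of whose principal down-sets is a chain (the forest condition), and that this order admits no infinite strictly descending chain (well-foundedness). The workhorse is the iteration of the left basic factorization, together with its uniqueness and the uniqueness of the first-occurrences factorization (Lemma~\ref{c:5}).

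For the forest condition, I would first describe the end-marked pseudowords lying $\le_{\Req}$-below a fixed $w$, i.e.\ its end-marked prefixes. Iterating the left basic factorization of $w$ produces $\lbf[1]{w},\lbf[2]{w},\dots$, and at step~$k$ a first-occurrence letter $a_k$ is split off; since $a_k\notin c(\lbf[k]{w})\supseteq\cum{\lbf[k]{w}}$, each product $\lbf[k]{w}\,a_k$ is end-marked, and one reads off a chain $w\ge_{\Req}\lbf[1]{w}\,a_1\ge_{\Req}\lbf[2]{w}\,a_2\ge_{\Req}\cdots$. The substance is the converse: \emph{every} end-marked prefix of $w$ appears in this list. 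I would prove this by induction on $\card{c(w)}$. By Lemma~\ref{c:5} an end-marked prefix $v$ of $w$ either has $c(v)\subsetneq c(w)$, whence $v$ is a prefix of a later term of the chain and the induction hypothesis applies, or has $c(v)=c(w)$, whence uniqueness of the first-occurrences factorization of $w$ locates $v$ precisely. Two $\Req$-equivalent end-marked prefixes then occupy the same slot, so they coincide; this gives antisymmetry, and hence that we are indeed dealing with a partial order whose down-sets are chains.

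For well-foundedness, suppose for contradiction that $w_1>_{\Req}w_2>_{\Req}\cdots$, so each $w_{n+1}$ is a proper prefix of $w_n$ and hence a prefix of $w_1$. It therefore suffices to show that, among end-marked pseudowords, the prefixes of a fixed one are well-ordered by $\le_{\Req}$. Here the profinite topology must enter. One route: the contents $c(w_n)$ descend inside the finite alphabet and so stabilize, after which comparison of the first-occurrences factorizations of consecutive terms shows that only the last block moves, the descent being inherited by a properly descending chain of prefixes of that block; iterating this — and, when the content fails to drop, recursing into ever-deeper last blocks — one is reduced to handling a descending chain of prefixes within a fixed content, which a compactness argument in $\pseudo AS$ (pass to a convergent subsequence and examine its limit) should rule out. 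An alternative route is to transport the question along $\rho_\R$ to $\pseudo A R$, where the prefixes of an element are exactly its initial segments and hence visibly well-ordered, once one checks that $\rho_\R$ reflects strictness of $\le_{\Req}$ sufficiently.

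I expect this last part — ruling out an infinite descent once the combinatorial bookkeeping with contents and first-occurrence letters has been exhausted — to be the main obstacle. The uniqueness of left basic factorizations suffices for the forest/chain structure, but the descending chain condition is not a formal consequence of it: it genuinely uses the profinite topology, or a structural description of $\pseudo A R$.
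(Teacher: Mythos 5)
The paper does not prove this proposition at all: it is quoted verbatim from \cite[Proposition~4.8]{JA} and used as a black box (its only appearance is in Section~\ref{sec:1004}, to linearly order the splitting points of~$\delta(x)$). So there is no in-paper proof to compare your attempt against, and any verdict has to rest on the attempt's internal soundness.

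On that score, your sketch for the forest half is on the right track: the uniqueness of left basic and first-occurrences factorizations does force any two end-marked prefixes of a fixed pseudoword to be comparable, and the induction on $\card{c(w)}$ is a reasonable way to organize this. The genuine gap is exactly where you flag it, but I would say the problem is more structural than you acknowledge. In the ``recurse into ever-deeper last blocks'' step, nothing guarantees that the recursion makes progress: when the content of the last block of the first-occurrences factorization does not drop, you land back in a situation with the same content parameter, and you have also not checked that the last block inherits the end-marked property, which is what licenses reapplying the hypotheses at all. (The end-marked condition on $w_n = u_n a_n$ constrains $\cum{u_n}$, not the cumulative content of a tail block of $w_n$; carrying the condition through the factorization needs an argument.) The fallback compactness argument is also not a proof as written: passing to a convergent subsequence of prefixes and ``examining its limit'' does not by itself yield a contradiction, because a limit of proper prefixes can perfectly well be a proper prefix of each of them. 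Finally, the alternative via $\rho_\R$ hinges on the claim that $\rho_\R$ reflects strictness of $\le_{\Req}$ on end-marked pseudowords; this is precisely the nontrivial content one would need to import from \cite{JA} (or from \cite{drh}), so citing it as a cheap reduction is circular. In short, the forest part is essentially right, but the well-foundedness part, which is the actual substance of \cite[Proposition~4.8]{JA}, remains unproved in your write-up.
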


We end this subsection with some results concerning identities over
$\DRH$.
They seem to be already used in the
literature, however, since
we could not find the exact statement that fits our purpose, we
include the proofs for the sake of completeness.

\begin{lem}
  \label{sec:13}
  Let $u, v$ be pseudowords. Then, $\rho_{\DRH} (u)$ and
  $\rho_\DRH(v)$ lie in the same $\Req$-class if and only if the
  pseudovariety $\DRH$ satisfies $\lbf[k] u = \lbf[k] v$, for every $k
  \ge 1$.
\end{lem}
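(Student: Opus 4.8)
Both directions should fall out of the way the iterated left basic factorization governs the $\Req$-order on $\pseudo A{DRH}$, together with the well-foundedness from Proposition~\ref{p:2}. The structural input I would borrow from~\cite{drh} is this: for $w\in\pseudo A{DRH}$ with left basic factorization $w=w_\ell a w_r$, the principal right ideal of $w$ equals $(w_\ell a)\cdot\bigl(w_r\,(\pseudo A{DRH})^{I}\bigr)$, and left multiplication by $w_\ell a$ is injective; so the $\Req$-class of $w$ is determined by, and determines, the end-marked prefix $w_\ell a$ together with the $\Req$-class of the tail $w_r$. I would also record the purely formal fact that $\rho_\DRH$ carries the left basic factorization of a pseudoword over $\s$ to that of its image over $\DRH$ (this is well posed because the latter is unique), so that $\rho_\DRH(\lbf[1]u)$, and inductively $\rho_\DRH(\lbf[k]u)$ for every $k$, depends only on $\rho_\DRH(u)$; in particular the distinguished letter at each level of the iterated factorization is a $\Req$-invariant, and so is the content, since $\Sl\subseteq\DRH$.

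For the implication $(\Rightarrow)$, assume $\rho_\DRH(u)\Req\rho_\DRH(v)$. Then $c(u)=c(v)$ by the above, and by the structural fact $\rho_\DRH(u)$ and $\rho_\DRH(v)$ have the same end-marked left basic prefix and $\Req$-equivalent tails; in particular $\DRH\vDash\lbf[1]u=\lbf[1]v$. Since the tails are again $\Req$-related in $\pseudo A{DRH}$, I would iterate — stripping off a common end-marked prefix at each stage — to obtain $\DRH\vDash\lbf[k]u=\lbf[k]v$ for all $k\ge1$, the formal fact of the first paragraph guaranteeing that the object produced at stage $k$ really is $\rho_\DRH(\lbf[k]u)$.

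For the implication $(\Leftarrow)$, assume $\DRH\vDash\lbf[k]u=\lbf[k]v$ for all $k\ge1$, and write the left basic factorizations $u=u_\ell a u_r$, $v=v_\ell b v_r$. Because the distinguished letter and the left factor are each determined by the $\DRH$-class of a pseudoword, the hypotheses yield $\rho_\DRH(u_\ell)=\rho_\DRH(v_\ell)$ and, comparing the distinguished letters across all levels — which enumerate the respective contents — also $c(u)=c(v)$; combined with $a\notin c(u_\ell)=c(v_\ell)\not\ni b$ and $c(u_\ell a)=c(u)=c(v)=c(v_\ell b)$, this forces $a=b$. Hence $\rho_\DRH(u)$ and $\rho_\DRH(v)$ share the end-marked prefix $\rho_\DRH(u_\ell a)$, and since $(zx)(\pseudo A{DRH})^{I}=z\bigl(x(\pseudo A{DRH})^{I}\bigr)$, proving $\rho_\DRH(u)\Req\rho_\DRH(v)$ reduces to proving $\rho_\DRH(u_r)\Req\rho_\DRH(v_r)$. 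This I would establish by an induction controlled by Proposition~\ref{p:2}: one first checks that the hypotheses $\DRH\vDash\lbf[k]u=\lbf[k]v$ descend — after unwinding, through the cumulative content, how the iterated left basic factorizations of $u,v$ restrict to those of $u_r,v_r$ — to the analogous hypotheses for $u_r$ and $v_r$, whose end-marked prefixes lie strictly below those just peeled off, so the inductive hypothesis applies. The base case is the situation where the cumulative content has already been reached and the left basic factorization loops: there $\rho_\DRH(u)$ and $\rho_\DRH(v)$ sit in a regular $\Req$-class, which is a maximal subgroup of $\pseudo A{DRH}$ (hence a member of $\h$), and $\Req$-equivalence is immediate from $c(u)=c(v)$.

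The step I expect to be genuinely delicate is exactly this induction in $(\Leftarrow)$: isolating the right well-founded quantity — descent through $u\mapsto u_r$ need not terminate, so the induction must be run on the end-marked prefixes, whose strictly descending chains for the $\Req$-order are finite by Proposition~\ref{p:2} — and transporting the hypothesis ``all $\lbf[k]$ agree'' cleanly across the passage from $(u,v)$ to $(u_r,v_r)$ while keeping careful track of where the cumulative content, and with it the regular part, takes over.
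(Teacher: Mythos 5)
The paper's proof of $(\Leftarrow)$ avoids the inductive peeling entirely: from $\DRH\vDash\lbf[k]u=\lbf[k]v$ for all $k$, the partial products $\lbf[1]u\cdots\lbf[k]u$ and $\lbf[1]v\cdots\lbf[k]v$ are $\DRH$-equal for each $k$, so a common convergent subnet yields accumulation points $u_0$, $v_0$ with $u_0=_\DRH v_0$; these accumulation points are $\Req$-equivalent modulo $\DRH$ to $u$ and $v$ respectively (compare Proposition~2.1.4 of~\cite{drh} and the discussion of the idempotent designated by such a sequence), and $u\Req u_0 =_\DRH v_0 \Req v$ gives the result at once.

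Your alternative is an inductive peeling controlled by Proposition~\ref{p:2}, and the peeling itself, together with the transport of the hypothesis from $(u,v)$ to $(u_r,v_r)$, is plausible; the gap is at the base case. You claim that once the cumulative content is reached, $\rho_\DRH(u)$ and $\rho_\DRH(v)$ sit in a regular $\Req$-class and ``$\Req$-equivalence is immediate from $c(u)=c(v)$.'' That step fails: a regular $\Req$-class of $\pseudo A{DRH}$ is not determined by the content of its elements. For example $(ab)^\omega$ and $(ba)^\omega$ both have content equal to their cumulative content $\{a,b\}$, yet they lie in distinct regular $\Req$-classes, since already $\lbf[1]{(ab)^\omega}=ab\neq ba=\lbf[1]{(ba)^\omega}$. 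At your base case you do still hold the hypothesis $\DRH\vDash\lbf[k]u=\lbf[k]v$ for all $k$ (if some left basic factor disagreed the peeling would not have reached that stage), and you must actually use it to show the two regular parts land in the \emph{same} regular $\Req$-class. But that identification is exactly what the accumulation-point step supplies and is the nontrivial content of the lemma in this case; inferring it from $c(u)=c(v)$ alone assumes what is to be proved.
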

\begin{proof}
  The sufficient condition follows straightforwardly from the
  definitions of the relation~$\Req$, of
  left basic factorization, and of cumulative content.
  Conversely, suppose that~$\DRH$ satisfies $\lbf[k] u = \lbf[k] v$, for every $k
  \ge 1$. Then, we may choose accumulation points of the sequences
  $(\lbf[1]u\cdots \lbf[k]u)_{k \ge 1}$ and $(\lbf[1]v\cdots
  \lbf[k]v)_{k \ge 1}$, say $u_0$ and $v_0$ respectively, such that
  $\DRH$ satisfies $u_0 = v_0$.
  Since $u_0$ and $v_0$ are both $\Req$-below $u$ and $v$
  modulo~$\DRH$, the result follows from a simple computation.
\end{proof}

The next result may be considered the key ingredient for the
representations of elements of $\pseudo A{DRH}$ presented in
\cite{drh}, of which we implicitly make use.

\begin{prop}
  [{\cite[Proposition 5.1.2]{drh}}]
  \label{p:3}
  Let $\V$ be a pseudovariety such that the inclusions $\h\subseteq \V \subseteq \DO
  \cap \overline \h$ hold. Then, the regular $\Heq$-classes of $\pseudo AV$
  are free pro-$\h$ groups on their content. More precisely, if $e$ is
  an idempotent of $\pseudo AV$ and if $H_e$ is its $\Heq$-class, then
  letting $\psi_e(a) = eae$ for each $a \in c(e)$ defines a unique
  homeomorphism $\psi_e:\pseudo{c(e)}H \to H_e$ whose inverse is the
  restriction of $\rho_{\V, \h}$ to $H_e$.
\end{prop}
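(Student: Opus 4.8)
The plan is to produce mutually inverse continuous homomorphisms $\psi_e\colon\pseudo{c(e)}{H}\to H_e$ and $\rho_{\V,\h}|_{H_e}\colon H_e\to\pseudo{c(e)}{H}$. First, $H_e$ is a profinite group, being the group of units of the profinite monoid $e\,\pseudo{A}{V}\,e$; and since $\V\subseteq\overline\h$, writing $\pseudo{A}{V}$ as an inverse limit of semigroups of $\V$ exhibits $H_e$ as an inverse limit of groups of $\h$, so $H_e$ is a pro-$\h$ group. Two simple facts will be used repeatedly: for $x\in H_e$ one has $exe=x$ (as $e$ is the identity of $H_e$) and $c(x)=c(e)$ (since $x\Req e$, writing $x=et$ and $e=xs$ gives $c(x)=c(e)\cup c(t)$ and $c(e)=c(x)\cup c(s)$). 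The third and crucial ingredient is that $eae\in H_e$ for every $a\in c(e)$; it is proved at the end, and is where the hypothesis $\V\subseteq\DO$ enters.

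Granting that $eae\in H_e$ for $a\in c(e)$, the universal property of the free pro-$\h$ group provides a unique continuous homomorphism $\psi_e\colon\pseudo{c(e)}{H}\to H_e$ with $\psi_e(a)=eae$. In the other direction, $\rho_{\V,\h}(e)$ is an idempotent of the group $\pseudo{A}{H}$, hence equals $1$; and since $c(x)=c(e)$, each $x\in H_e$ lies in the closed subsemigroup of $\pseudo{A}{V}$ generated by $c(e)$, so $\rho_{\V,\h}(x)$ lies in the closed subsemigroup $K$ of $\pseudo{A}{H}$ generated by $c(e)$. Now $K$ is canonically isomorphic to $\pseudo{c(e)}{H}$ (being a retract of $\pseudo{A}{H}$, via the continuous endomorphism fixing $c(e)$ pointwise and sending $A\setminus c(e)$ to $1$), and being a closed subsemigroup of a profinite group it is a subgroup. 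Thus $\rho_{\V,\h}|_{H_e}$ is a continuous homomorphism $H_e\to\pseudo{c(e)}{H}$.

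These two maps are mutually inverse. For a generator $a\in c(e)$, $\rho_{\V,\h}(\psi_e(a))=\rho_{\V,\h}(e)\,\rho_{\V,\h}(a)\,\rho_{\V,\h}(e)=1\cdot a\cdot1=a$, so $\rho_{\V,\h}|_{H_e}\circ\psi_e$ is the identity of $\pseudo{c(e)}{H}$ by the universal property; in particular $\psi_e$ is injective. For surjectivity, take $x\in H_e$ and words $w_n\to x$; since the content function is continuous and $c(x)=c(e)$, we may assume $c(w_n)=c(e)$, so each $w_n$ is a product $b_1\cdots b_k$ of letters of $c(e)$. Using $e^2=e$ one gets $\psi_e(w_n)=(eb_1e)\cdots(eb_ke)=e\,w_n\,e$, whence $x=exe=\lim_n e\,w_n\,e=\lim_n\psi_e(w_n)$, which lies in the compact — hence closed — set $\psi_e(\pseudo{c(e)}{H})$. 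So $\psi_e$ is a continuous bijection between compact Hausdorff spaces, therefore a homeomorphism, with $\rho_{\V,\h}|_{H_e}=\psi_e^{-1}$; uniqueness of $\psi_e$ is part of the universal property.

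It remains to prove that $eae\in H_e$ for $a\in c(e)$, which I expect to be the main obstacle. From $e^2=e$ one checks that $e\,(eae)^n=(eae)^n=(eae)^n\,e$ for all $n\ge1$, so $g:=(eae)^\omega$ satisfies $eg=ge=g$. Since $\Req$-classes of $\pseudo{A}{V}$ are closed, it suffices to show $eae\Req e$: then $e=eae\cdot t$ for some $t$ yields $eae=(eae)^2t$ and hence $(eae)^n=(eae)^{n+1}t$, so $(eae)^n\Req eae\Req e$ for every $n$ and thus $g\Req e$; an idempotent which is $\Req$-equivalent to $e$ and satisfies $eg=ge=g$ must coincide with $e$, so $g=e$ and $eae$ is a unit of $e\,\pseudo{A}{V}\,e$, i.e.\ $eae\in H_e$. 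To establish $eae\Req e$ — the inequality $eae\le_\Req e$ being immediate — I would use Lemma~\ref{c:5} to write $e=uav$ with $a\notin c(u)$; then $e=e^2=(eu)a\,v$ and $eae=(eu)a\,vae$ share the prefix $(eu)a\Req e$, and one verifies, again from Lemma~\ref{c:5}, that $e$ and $eae$ have identical iterated left basic factorizations. The concluding step — that identical iterated left basic factorizations force $\Req$-equivalence in $\pseudo{A}{V}$ whenever $\h\subseteq\V\subseteq\DO\cap\overline\h$ — is the delicate part: for $\V=\DRH$ it is Lemma~\ref{sec:13}, and the general case rests on the structure theory of $\pseudo{A}{V}$ from \cite{drh}, ultimately on the defining pseudoidentities of $\DO$ (for example, when $e=(xy)^\omega$ and $a=x$ one has $eae\cdot t=e$ with $t=(yx)^\omega\,y\,(xy)^{\omega-1}$).
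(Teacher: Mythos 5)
The paper does not prove this proposition; it is quoted verbatim from Almeida--Weil \cite[Proposition~5.1.2]{drh}, so there is no internal proof to compare against. Your proposal is an attempt at reconstructing that external proof, and the scaffolding is sensible: build $\psi_e$ by the universal property, check $\rho_{\V,\h}|_{H_e}\circ\psi_e=\mathrm{id}$ on generators, argue surjectivity by density, and reduce everything to the key claim that $eae\in H_e$. The reduction of $eae\in H_e$ to $eae\Req e$ via $(eae)^\omega$ is correct (using that $\Req$ is a closed relation and that compact semigroups are stable). The identification of the image of $\rho_{\V,\h}|_{H_e}$ with $\pseudo{c(e)}{H}$ via $c(x)=c(e)$ is also fine.

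However, two load-bearing steps are not actually established. First, the surjectivity argument asserts $\psi_e(b_1\cdots b_k)=e\,b_1\cdots b_k\,e$ \emph{from $e^2=e$ alone}; the homomorphism property and $e^2=e$ only give $(eb_1e)\cdots(eb_ke)=eb_1eb_2e\cdots eb_ke$, and the further collapse $eb_1eb_2e\cdots eb_ke=eb_1\cdots b_ke$ requires the $\DO$-identity $eueve=euve$ (for $c(u),c(v)\subseteq c(e)$), which is essentially equivalent to the injectivity of $\rho_{\V,\h}|_{H_e}$, i.e.\ to the statement being proved. For $\V=\DRH$ one could extract it from Lemma~\ref{sec:16}, but the proposition concerns arbitrary $\V$ in the interval $[\h,\DO\cap\overline\h]$, where no such lemma is available in this paper. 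Second, the crucial inequality $e\le_\Req eae$ is only sketched; you yourself flag as ``the delicate part'' the implication from equal iterated left basic factorizations to $\Req$-equivalence in $\pseudo{A}{V}$ for general such $\V$, and defer it to the structure theory of \cite{drh}. In sum, the proposal correctly identifies where the $\DO$ hypothesis must enter and organizes the formal bookkeeping around it, but the two points at which $\DO$ actually does its work --- the collapse of the sandwiched idempotents, and $eae\Req e$ --- are not proved and are precisely what \cite{drh} supplies.
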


Before proving the last result of this subsection, we need to
introduce a definition.
Let~$u$ be a pseudoword such that~$c(u) = \cum u$.
Then, all accumulation points of the sequence
\begin{equation}
  \label{eq:10000}
  (\rho_\DRH(\lbf[1]u)\cdots
\rho_\DRH(\lbf[k]u))_{k \ge 1}
\end{equation}
 lie in the same regular $\Req$-class
\cite[Proposition~2.1.4]{drh}, which in turn, by
definition of~$\DRH$, is a group.
The identity of that group is said to be the \emph{idempotent
  designated by the sequence~\eqref{eq:10000}}.

The following lemma becomes trivial in the particular case of $\DRH = \R$.

\begin{lem}
  \label{sec:16}
  Let $u,v \in \pseudo AS$ and $u_0, v_0 \in (\pseudo AS)^I$ be such
  that $c(u_0) \subseteq \cum u$ and $c(v_0) \subseteq \cum
  v$. Then, the pseudovariety $\DRH$ satisfies $uu_0 = vv_0$ if and
  only if it satisfies $u \Req v$ and if, in addition, the pseudovariety
  $\h$ satisfies $uu_0 = vv_0$. In particular, by taking $u_0 = I =
  v_0$, we get that $u =_\DRH v$ if and only if $u \Req v$
  modulo~$\DRH$ and $u =_\h v$.
\end{lem}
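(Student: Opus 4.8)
The plan is to prove the ``In particular'' clause first and then bootstrap the general statement from it. For the special case $u_0 = I = v_0$: the implication $u =_\DRH v \Rightarrow u \Req v$ modulo $\DRH$ and $u =_\h v$ is immediate, since $\DRH$ satisfies both $\mathcal R$-triviality-type comparisons via Lemma \ref{sec:13} (equal elements are certainly $\Req$-equivalent) and $\h$ is a subpseudovariety of $\DRH$ (so $u =_\DRH v$ forces $u =_\h v$ via the canonical projection $\rho_{\DRH,\h}$). For the converse, suppose $u \Req v$ modulo $\DRH$ and $u =_\h v$. By Lemma \ref{sec:13}, $u \Req v$ modulo $\DRH$ means $\DRH$ satisfies $\lbf[k]u = \lbf[k]v$ for every $k \ge 1$. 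I would then split into two cases according to whether $c(u) = \cum u$ or not.

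First I would handle the case $c(u) = \cum u$ (equivalently $c(v) = \cum v$, which follows from the equalities of left basic factors). Here the element $\rho_\DRH(u)$ lies in a regular $\Req$-class, which by definition of $\DRH$ is a group $H_e$ with identity $e$ the idempotent designated by the sequence \eqref{eq:10000}; the same $e$ works for $v$ since the defining sequences have termwise-equal entries modulo $\DRH$. Now invoke Proposition \ref{p:3} with $\V = \DRH$ (the inclusions $\h \subseteq \DRH \subseteq \DO \cap \overline{\h}$ hold by definition of $\DRH$): the restriction of $\rho_{\DRH,\h}$ to $H_e$ is injective (it is the inverse of the homeomorphism $\psi_e$). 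Since $\rho_\DRH(u)$ and $\rho_\DRH(v)$ both lie in $H_e$ and have the same image in $\pseudo{c(e)}H$ — because $u =_\h v$ — they must be equal, i.e.\ $u =_\DRH v$.

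For the remaining case $c(u) \ne \cum u$, I would use the first-occurrences factorization of Lemma \ref{c:5} together with the left basic factorization to peel off a reduced product $u = u' a u''$ where $a \in c(u) \setminus \cum u$, writing $u'a = \lbf[1]u$; the analogous factorization $v = v' a v''$ holds with $\DRH$ satisfying $u' =_\DRH v'$ (this is $\lbf[1]u =_\DRH \lbf[1]v$ up to the common letter $a$, obtained from equality of all the $\lbf[k]$). Applying Lemma \ref{sec:13} and the description of $\DRH$-identities on reduced products, the problem reduces to proving $u'' =_\DRH v''$; since $c(u'') = \cum u'' = \cum u$ (the letter $a$ was the last ``new'' one), we are back in the previous case, or we iterate. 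Formally this is a well-founded induction on the $\le_\Req$-order of end-marked pseudowords, which is legitimate by Proposition \ref{p:2}.

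Finally, to obtain the general statement from the particular case, I would argue as follows. If $\DRH$ satisfies $uu_0 = vv_0$, then since $c(u_0) \subseteq \cum u$ the product $uu_0$ has the same left basic factors as $u$ at every level (the factor $u_0$ is absorbed into the ``$u_r$'' part throughout), hence $uu_0 \Req u$ modulo $\DRH$, and symmetrically $vv_0 \Req v$; transitivity gives $u \Req v$ modulo $\DRH$, and projecting to $\h$ gives $uu_0 =_\h vv_0$. Conversely, if $u \Req v$ modulo $\DRH$ and $uu_0 =_\h vv_0$, then from $u \Req v$ and Lemma \ref{sec:13} one gets equality of all left basic factors, which together with $c(u_0) \subseteq \cum u$, $c(v_0) \subseteq \cum v$ shows $uu_0 \Req vv_0$ modulo $\DRH$; combined with $uu_0 =_\h vv_0$, the already-proven particular case yields $uu_0 =_\DRH vv_0$. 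The main obstacle I anticipate is the bookkeeping in the case $c(u) \ne \cum u$ — making precise how equality of left basic factors propagates through the reduced factorization and setting up the well-founded induction cleanly — rather than any conceptual difficulty; the case $c(u) = \cum u$ is where the real content lies, and there Proposition \ref{p:3} does essentially all the work.
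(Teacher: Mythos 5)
Your proposal is correct and follows essentially the same route as the paper's proof: the real content lies in the case $c(u)=\cum u$ (the paper's tail factor $u_2$), where Proposition~\ref{p:3}, applied to the $\Heq$-class of the idempotent designated by the sequence~\eqref{eq:10000}, does the work, and the prefix is disposed of via Lemma~\ref{sec:13}. The paper proves the general statement directly, decomposing $u = u_1u_2$ in one step by choosing $m$ with $c(\lbf[m]u) = \cum u$, rather than proving the ``in particular'' clause first and iterating a single-step peel-off. Two small points of bookkeeping in your version: the first marker $a_1$ need not lie in $c(u)\setminus\cum u$ (only $a_1 \notin \cum{u'}$ is guaranteed, which is what makes the product reduced), and after one peel you need not yet have $c(u'')=\cum{u''}$ — as you already hedged with ``or we iterate''; the termination of that iteration is most cleanly justified by the stabilization of the descending chain $(c(\lbf[k]u))_{k\ge 1}$ rather than by Proposition~\ref{p:2}.
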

\begin{proof}
  Let $u$, $v$, $u_0$, and $v_0$ be pseudowords satisfying the
  hypothesis of the lemma. We start by noticing that the inclusions
  $c(u_0) \subseteq \cum u$ and $c(v_0) \subseteq \cum v$ imply,
  respectively, that
  $\lbf[k] u = \lbf[k]{uu_0}$ and $\lbf[k] v = \lbf[k]{vv_0}$, for
  every $k \ge 1$.
  
  Let us suppose that $\DRH$ satisfies $uu_0 = vv_0$.
  Then, for every $k \ge 1$, we have $\lbf[k]u = \lbf[k]{uu_0} =
  \lbf[k]{vv_0} = \lbf[k]v$. 
  By Lemma~\ref{sec:13}, it follows that $u$ and $v$ are $\Req$-equivalent
  modulo $\DRH$.
  The pseudoidentity $uu_0 =_\h vv_0$ follows from the fact
  that $\h$ is a subpseudovariety of $\DRH$.

  Conversely, we assume that $u \Req v$ modulo $\DRH$ and that $uu_0 =
  vv_0$ modulo $\h$. Invoking Lemma~\ref{sec:13} again, we have
  $\lbf[k] u =_\DRH \lbf[k]v$, for every $k \ge 1$.
  If $\cum u = \cum v = \emptyset$, then $u_0 = v_0 = I$ and the
  pseudoidentity $uu_0 =_\DRH vv_0$ is immediate.
  Otherwise, if $\cum u = \cum v \neq \emptyset$, then we let $m$ be
  such that $c(\lbf[m]u) = \cum u$ (and consequently, $c(\lbf[m]v) =
  \cum v$), and we let $u_1 = \lbf[1]u\cdots \lbf[m-1]u$, $v_1 =
  \lbf[1]v\cdots \lbf[m-1]v$, and $u_2, v_2$ be the unique pseudowords
  such that $u = u_1u_2$ and $v = v_1v_2$.
  Note that  $\lbf[k]{u_2} = \lbf[m+k-1]u$, for
  every~$k \ge 1$ and hence,  the equalities $c(u_2) = \cum{u_2} =
  \cum u$ hold (similarly for~$v_2$).
  Since $\lbf[k] u =_\DRH \lbf[k]v$ ($k \ge 1$),
  the idempotent
  designated by the sequence $(\rho_\DRH(\lbf[m]u) \cdots
  \rho_\DRH(\lbf[k]u))_{k \ge m}$
  is the same as the idempotent designated by the sequence
  $$(\rho_\DRH(\lbf[m]v) \cdots \rho_\DRH(\lbf[k]v))_{k \ge m},$$
  say~$e$.
  On the other hand, since $\rho_\h(u_2u_0) = \rho_\h(v_2v_0)$, it
  follows that $\psi_e(\rho_\h(u_2u_0)) = \psi_e(\rho_\h(v_2v_0)) $, where
  $\psi_e$ is the homeomorphism of Proposition~\ref{p:3}.
  Finally, since both $\rho_\DRH(u_2u_0)$ and $\rho_\DRH(v_2v_0)$ lie
  in the $\Heq$-class of~$e$, Proposition~\ref{p:3} yields that they are
  equal (because $\rho_\h$ is the inverse of~$\psi_e$).
\end{proof}

\subsection{Decidability}\label{sec:1}

The \emph{membership problem} for a pseudovariety $\V$ amounts to
determining whether a given finite semigroup belongs to $\V$.
If there exists an algorithm to solve this problem, then the
pseudovariety $\V$ is said to be \emph{decidable}.
As we already referred in  the Introduction,
other stronger notions of decidability have been set up over the
years. They are related with so-called systems of pseudoequations.

Let $X$ be a finite set of \emph{variables}. A \emph{pseudoequation}
is a
formal expression $u = v$ with $u, v \in \pseudo {X} S$. If
$u,v \in \pseudosig {X}S$, then $u = v$ is said to be a
\emph{$\sigma$-equation}. A \emph{finite system of pseudoequations}
(respectively, \emph{$\sigma$-equations}) is a
finite set
\begin{equation}
  \label{eq:35}
  \{u_i = v_i \colon i = 1, \ldots, n\},
\end{equation}
where each $u_i = v_i$ is a pseudoequation (respectively,
$\sigma$-equation). For each
variable $x \in X$, we consider a \emph{constraint} given by a
clopen subset $K_x$ of $\pseudo AS$. 
Then, a
\emph{solution modulo~$\V$}
of the system \eqref{eq:35}
\emph{satisfying the given constraints} is a continuous homomorphism $\delta:
\pseudo {X} S \to \pseudo AS$ such that the following conditions are
satisfied:
\begin{enumerate}[label = (S.\arabic*)]
\item \label{s1} $\delta(u_i) =_\V \delta(v_i)$, for $i = 1, \ldots, n$;
\item \label{s3} $\delta(x) \in K_x$, for
  every variable $x \in X$.
\end{enumerate}
If $\delta(X) \subseteq \pseudosig AS$, then we say that
$\delta$ is a solution modulo $\V$ of \eqref{eq:35} \emph{in
  $\sigma$-words}.

\begin{rem}\label{sec:50}
  It follows from Hunter's Lemma that, for each clopen set $K_x$, there
  exists a finite semigroup $S_x$ and a continuous homomorphism $\varphi_x:
  \pseudo AS \to S_x$ such that $K_x$ is the preimage of
  $\varphi_x(K_x)$ under~$\varphi_x$ (see
  \cite[Proposition~3.5]{profinite}, for instance). It
  is sometimes more
  convenient to think of the constraints of the variables in terms of
  a fixed pair $(\varphi,\nu)$, where $\varphi: \pseudo AS \to S$ is a
  continuous homomorphism into a
  finite semigroup $S$ and $\nu: X \to S$ is a map.
  In that way, the requirement \ref{s3} becomes a finite union
  of requirements of the form 
  ``$\varphi(\delta(x)) = \nu_j(x)$, for every variable $x \in X$'',
  for a certain finite family $(\nu_j:X \to S)_{j} $ of mappings.
  We may also assume, without loss of generality that $S$ has a
  content function (see \cite[Proposition
  2.1]{MR1834943}).
  Moreover, usually, we  wish to allow $\delta$ to take its values
  in $(\pseudo AS)^I$.
  For that purpose, we naturally extend the function $\varphi$ to a 
  continuous homomorphism $\varphi^I: (\pseudo AS)^I \to S^I$ by letting
  $\varphi^I(I) = I$.
  It is worth noticing that this assumption does not lead to trivial
  solutions since the constraints must be satisfied.
  We allow ourselves some flexibility in these points, adopting each
  approach according to which is the most suitable. 
  In the case where we consider the homomorphism $\varphi^I$, we abuse
  notation and denote it by~$\varphi$.
\end{rem}

Given a class $\iC$ of finite systems of pseudoequations, one may pose
the following problem:
\begin{quote}
  determine whether a given system from $\iC$ (together with some
  constraints on variables)
  has a solution modulo $\V$.
\end{quote}
The pseudovariety $\V$ is \emph{$\iC$-decidable}
if the above decision problem is decidable.

An  important instance of a class of systems of equations comes from graphs. 
Let $\Gamma = V \uplus E$ be a directed graph, where $V$ and $E$ are
finite sets, respectively, of \emph{vertices} and \emph{edges}.
We consider $\Gamma$ equipped with two maps $\alpha: E \to V$ and
$\omega: E \to V$, such that an edge $e \in E$ goes from
the vertex $v_1 \in V$ to the vertex $v_2 \in V$ if and only if
$\alpha(e) = v_1$ and $\omega(e) = v_2$.
We may associate to each edge $e \in E$, the
equation $\alpha(e)e = \omega(e)$. We denote by $\iS(\Gamma)$
the finite system of equations obtained in this way from
$\Gamma$. Whenever $\iS$ is a finite system of this form, we say
that $\iS$ is a \emph{system of graph equations}.
We notice that any system of graph equations is of the form $\{x_i y_i
= z_i\}_{i = 1}^N$, where $y_i \neq y_j$ for $i \neq j$ and $y_i
\notin \{x_j, z_j\}$, for all $i,j$.
If $\iC$ is the class of all systems of graph equations, arising from a
graph with $n$ vertices at most,
then $\iC$-decidability deserves the name of
\emph{$n$-hyperdecidability} in
\cite{hyperdecidable}. The pseudovariety $\V$ is \emph{hyperdecidable} if it is
$n$-hyperdecidable for all $n \ge 1$.

When the constraints of the variables $e \in E$ are set to be given
by the clopen subset
$K_e = \{I\}$, the system $\iS(\Gamma)$ is called a
\emph{system of pointlike
  equations}. 
We say that $\V$ is \emph{strongly
  decidable}
if it is decidable for the class of all systems of
pointlike equations.

Next, we present some remarkable results involving these notions that
motivate us to consider the classes
of systems of (idempotent) pointlike and graph equations.

\begin{prop}
  [{\cite[Corollary 4]{hyperdecidable}}]
  \label{4hyperdecidable}
  Every strongly decidable pseudovariety is also decidable.
\end{prop}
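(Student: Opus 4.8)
The plan is to reduce strong decidability to ordinary membership by an enumeration-with-verification argument. Suppose $\V$ is strongly decidable, i.e.\ there is an algorithm that, given a system of pointlike equations together with the constraints $K_x = \{I\}$ for the edge variables, decides whether a solution modulo~$\V$ exists. Let $S$ be a finite semigroup presented by generators and relations, say with generating set $A$; membership of $S$ in $\V$ amounts to deciding whether the canonical continuous homomorphism $\pseudo AS \to S$ factors through $\rho_\V$, equivalently whether every pseudoidentity $u = v$ valid in $\V$ is satisfied by $S$. The key observation is that this condition can be tested by a single, effectively constructible system of pointlike equations over a suitable graph.

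The main step is the following translation. For the given finite semigroup $S$, build a graph $\Gamma$ whose vertices are indexed by the elements of $S$ and whose edges encode the multiplication table: for each pair $s, t \in S$ put an edge from the vertex named $s$ to the vertex named $st$, labelled by a fresh edge variable. Setting the edge constraints to $K_e = \{I\}$ turns $\iS(\Gamma)$ into a system of pointlike equations in the sense defined above, and one imposes on the vertex variables the constraints coming from a continuous homomorphism $\varphi\colon \pseudo AS \to S$ together with the identity map $\nu = \mathrm{id}_S$, in the manner described in Remark~\ref{sec:50}. One then checks that a solution $\delta$ modulo~$\V$ of this system, respecting the constraints, exists if and only if $S \in \V$: a solution assigns to each vertex variable a pseudoword mapping under $\varphi$ to the prescribed element of $S$ while the equations force $\delta$ to respect products modulo~$\V$, so the assignment descends to a relational morphism from $S$ into a member of $\V$ that is in fact a homomorphism, forcing $S \in \V$; conversely, if $S \in \V$ then the inclusion $S \hookrightarrow S$ read off through $\varphi$ supplies the required solution.

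Having the translation, the algorithm for membership in $\V$ is immediate: on input $S$, construct $\Gamma$ and the associated system of pointlike equations with its constraints, run the strong-decidability algorithm on it, and return its answer. Correctness is exactly the equivalence proved in the previous paragraph, and effectivity is clear since $\Gamma$, the edge constraints $\{I\}$, and the vertex constraints $(\varphi, \mathrm{id}_S)$ are all computable from a finite presentation of~$S$.

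The step I expect to be the main obstacle is verifying the equivalence ``a constrained solution of $\iS(\Gamma)$ modulo $\V$ exists $\iff S \in \V$'' with the correct handling of the empty word: the edge constraints $K_e = \{I\}$ force each edge variable to be sent to the identity of $(\pseudo AS)^I$, so that the equation $\alpha(e)\,e = \omega(e)$ collapses to $\delta(\alpha(e)) =_\V \delta(\omega(e))$, and one must check that these collapsed equations, ranging over all edges of the multiplication-table graph, say precisely that the $\varphi$-labelling of vertices is compatible with the $\V$-quotient. This is the content of \cite[Corollary 4]{hyperdecidable}; the remaining bookkeeping — that the construction is uniform in $S$ and that extending $\varphi$ to $\varphi^I$ causes no trivial solutions, as noted in Remark~\ref{sec:50} — is routine.
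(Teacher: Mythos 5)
The paper itself does not prove this proposition; it is cited from \cite[Corollary~4]{hyperdecidable}. Your proposed argument, however, contains a genuine logical error, and I want to flag it clearly.

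Your construction is the multiplication-table graph of $S$: vertices indexed by the elements of $S$, one edge $e_{s,t}$ from vertex $s$ to vertex $st$, constrained by $K_{e_{s,t}} = \{I\}$. The equation $x_s\, e_{s,t} = x_{st}$ then collapses to $\delta(x_s) =_\V \delta(x_{st})$, as you note. But this does \emph{not} say that $\delta$ ``respects products modulo $\V$''; a relational-morphism condition would be $\delta(x_s)\delta(x_t) =_\V \delta(x_{st})$, which is not the equation you get when the edge variable is forced to be $I$. What the collapsed equations actually say is that all vertex variables in a connected component of your graph take $\V$-equivalent values; equivalently, that each connected component of the multiplication-table graph is a $\V$-pointlike subset of $S$. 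Since $S\in\V$ implies the only $\V$-pointlike subsets of $S$ are singletons, and since the component containing $s$ also contains $st$ for every $t$, your system generically has \emph{no} solution when $S\in\V$ and $|S|>1$. The claimed equivalence is therefore essentially reversed: solvability of your system is evidence of \emph{failure} of membership, and the ``converse'' direction (that $S\in\V$ supplies a solution) fails outright.

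The correct route is considerably simpler and does not need the multiplication-table graph. Recall that $S\in\V$ if and only if the projection $\varphi\colon \pseudo AS\to S$ factors through $\rho_\V$, which holds if and only if there is no pair of pseudowords $u,v$ with $\rho_\V(u)=\rho_\V(v)$ but $\varphi(u)\neq\varphi(v)$; equivalently, if and only if no two-element subset $\{s,s'\}$ of $S$ with $s\neq s'$ is $\V$-pointlike. For each such pair, form the system $\{x_1 = x_2\}$, i.e.\ the graph with two vertices $x_1,x_2$ joined by one edge constrained to $\{I\}$, with vertex constraints $K_{x_1}=\varphi^{-1}(s)$ and $K_{x_2}=\varphi^{-1}(s')$. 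This system has a solution modulo $\V$ precisely when $\{s,s'\}$ is $\V$-pointlike. Strong decidability of $\V$ lets you run this check for each of the finitely many unordered pairs $\{s,s'\}\subseteq S$, and $S\in\V$ exactly when every such check answers negatively. This yields an algorithm for membership, which is the content of the proposition.
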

\begin{thm}
  [{\cite[Theorem 14]{hyperdecidable}}]
  \label{14hyperdecidable}
  Let $n$ be a natural number, $\V$ a decidable pseudovariety of rank
  $n$ containing the Brandt semigroup $B_2$, and $\W$ a
  $(n+1)$-hyperdecidable pseudovariety. Then, $\V * \W$ is
  decidable.
\end{thm}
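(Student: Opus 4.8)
The plan is to reduce the membership problem for $\V * \W$ to the decision problem solved by $(n+1)$-hyperdecidability of~$\W$, invoking decidability of~$\V$ only to produce the constraints. The backbone is Tilson's derived category theorem: a finite semigroup~$S$ belongs to $\V * \W$ if and only if there is a relational morphism $\mu\colon S \to T$ with $T \in \W$ whose derived category $D_\mu$ divides a member of~$\V$. Thus membership of~$S$ is an existential statement over pairs $(T,\mu)$, and the task is to make this search both bounded and effective.

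First I would turn the search into a problem about graph equations. Fix a finite generating set~$A$ of~$S$; a relational morphism $\mu\colon S \to T$ with $T$ generated by $\mu(A)$ is induced by a continuous homomorphism $\eta\colon \pseudo AW \to T$, and conversely each such~$\eta$ (equivalently, each clopen congruence on $\pseudo AW$) yields a canonical relational morphism~$\mu_\eta$. The datum $(T,\mu)$ is then encoded by~$\eta$, and the derived category $D_{\mu_\eta}$ is a profinite category whose object set is $T^I$ and whose edges are labelled by elements of~$S^I$ according to the multiplication of~$S$ and to~$\eta$. The requirement that $D_{\mu_\eta}$ divide a member of~$\V$ is a property of the labelled graph underlying $D_{\mu_\eta}$: it asks for a relabelling of its edges by a semigroup of~$\V$ that respects composition, namely a solution, modulo~$\V$, of a system of graph equations whose shape is dictated by~$S$ and whose constraints come from the $S^I$-labels.

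The crucial step is then to bound the number of objects that must be inspected. The object set $T^I$ of $D_{\mu_\eta}$ may be arbitrarily large, but the hypothesis that~$\V$ has rank~$n$ is designed precisely so that divisibility of a category by a member of~$\V$ can already be tested on the full subcategories supported by at most~$n$ objects; the assumption $B_2 \in \V$ is what makes this category-theoretic condition reduce to the semigroup-theoretic one on those small pieces (this is exactly the role of~$B_2$ in~\cite{hyperdecidable}). Transporting this back through~$\mu_\eta$, one gets that $S \in \V * \W$ if and only if a finite system $\iS(\Gamma)$ of graph equations built from~$S$, over a graph~$\Gamma$ with at most $n+1$ vertices, admits a solution modulo~$\W$ satisfying constraints that encode ``being realised inside a semigroup of~$\V$''. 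Since~$\V$ is decidable, those constraints are effectively describable clopen sets, so the instance is legitimate; and since~$\W$ is $(n+1)$-hyperdecidable, the existence of such a constrained solution is decidable. Hence membership in $\V * \W$ is decidable.

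The hard part is the bounding step: verifying that the rank~$n$ hypothesis together with $B_2 \in \V$ genuinely allows one to replace the unbounded derived category by graphs on at most $n+1$ vertices, and checking that the resulting graph-equation system over~$\W$ is faithful to the original membership question in both directions. A secondary technical point is to confirm that the $\V$-constraints extracted along the way are clopen and computable, so that the reduction delivers a valid input to the $(n+1)$-hyperdecidability algorithm; this is routine given decidability of~$\V$.
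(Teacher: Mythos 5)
This theorem is stated in the paper purely as a quoted background result from \cite{hyperdecidable}, with no proof given, so there is nothing internal to compare your sketch against. On its own terms, your sketch finds the standard route --- Tilson's derived category theorem --- which is indeed the strategy used in the cited source, and you are right that the bounding step carries essentially all the weight.

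The reservation is that your account of that bounding step misdescribes how the rank-$n$ and $B_2$ hypotheses actually enter. Rank $n$ means that $\V$ admits a basis of pseudoidentities in at most $n$ variables; the basis theorem for semidirect products converts each such pseudoidentity into a graph condition on a graph with at most $n+1$ vertices, and that is precisely why $(n+1)$-hyperdecidability of $\W$ is the right tool. This is not the same as saying that divisibility by a member of $\V$ can be tested on full subcategories with $n$ objects: the relevant substructures are images of those bounded graphs in the derived category, not full subcategories, and rank is a statement about defining pseudoidentities for $\V$, not a property about restriction of categories. The role of $B_2 \in \V$ is to ensure, via Tilson's results relating the pseudovariety of categories generated by $\V$ to $\V$ itself, that the derived-category membership criterion can be expressed through such graph pseudoidentities in the first place, rather than being a separate device for passing from categories to semigroups on small pieces. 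So the strategic outline is sound and your acknowledgement of the gap is honest, but the mechanism you sketch for the bounding step would need to be replaced by the basis-theorem argument to become a proof.
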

\begin{prop}
  [{\cite[Corollary 5]{silvaJA}}]
  \label{5silvaJA}
  If $\V$ is strongly decidable and $\W$ is
  order-computable, then $\V
  * \W$ is strongly decidable.
\end{prop}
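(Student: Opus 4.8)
The plan is to reduce the strong decidability of $\V * \W$ to the computation of pointlike sets, and then to use Tilson's Derived Category Theorem to transfer the problem from $S$ to a finite derived semigroupoid to which strong decidability of $\V$ can be applied. Throughout, I rely on the standard fact that a pseudovariety $\U$ is strongly decidable if and only if there is an algorithm computing, from a finite semigroup $S$, the family $\mathrm{PL}_\U(S)$ of $\U$-pointlike subsets of $S$: a system of pointlike equations with clopen constraints translates, via a finite image of the constraint homomorphism, into the question whether prescribed subsets of a finite semigroup are $\U$-pointlike. So it suffices to produce an algorithm that, on input a finite semigroup $S$ and a subset $X \subseteq S$, decides whether $X$ is $\V * \W$-pointlike.

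The first step uses order-computability of $\W$ to build, effectively, the finite semigroup $W_0 := \pseudo SW$ together with the canonical relational morphism $\mu \colon S \to W_0$ — concretely, $\mu$ is the subsemigroup of $S \times W_0$ generated by the pairs $(s,\pi(s))$, $s \in S$, where $\pi \colon \pseudo SS \to W_0$ is the canonical projection. One checks in the usual way that $\mu$ is the most general relational morphism from $S$ into $\W$: given any relational morphism $\tau \colon S \to W'$ with $W' \in \W$, selecting a preimage of each element of $S$ and extending over $\pseudo SS$ produces a morphism $W_0 \to W'$ through which $\tau$ factors. Hence, for every relational morphism of $S$ into $\W$ (and, through the Derived Category Theorem, into $\V * \W$) it is enough to work with this single $\mu$.

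The second step forms the derived semigroupoid $\mathcal D_\mu$ of $\mu$. Since $S$ and $W_0$ are finite, $\mathcal D_\mu$ is a finite semigroupoid and is computable from $\mu$, as are all of its local semigroups and its consolidation. I would then invoke the pointlike version of the Derived Category Theorem: $X$ is $\V * \W$-pointlike in $S$ if and only if there is an object $w$ of $\mathcal D_\mu$ such that $X \subseteq \mu^{-1}(w)$ and the family of coterminal arrows of $\mathcal D_\mu$ induced by $X$ is pointlike with respect to the pseudovariety $g\V$ of semigroupoids generated by $\V$. Using that this category-theoretic pointlikeness is, in the present finite setting, controlled by the finite local semigroups of $\mathcal D_\mu$ (equivalently, by its consolidation), the condition reduces to deciding whether finitely many explicit subsets of finitely many explicit finite semigroups are $\V$-pointlike — which is possible because $\V$ is strongly decidable. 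As there are only finitely many objects $w$ to inspect, membership of $X$ in $\mathrm{PL}_{\V * \W}(S)$ is decidable; letting $X$ range over all subsets of $S$ yields $\mathrm{PL}_{\V * \W}(S)$, and therefore $\V * \W$ is strongly decidable.

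The step I expect to be the main obstacle is the last one: the Derived Category Theorem is naturally phrased with $g\V$, whereas strong decidability of $\V$ refers only to semigroups, so one must justify that for the purpose of deciding pointlikeness it suffices to test $\V$-pointlikeness inside the finite local semigroups of $\mathcal D_\mu$ — that is, that no essentially ``global'' obstruction can arise here. Carrying out that reduction precisely, and checking that order-computability of $\W$ is exactly the hypothesis needed to make $\mu$, hence $\mathcal D_\mu$, effectively available, is where the real content lies; the remaining manipulations with relational morphisms and constraints are routine.
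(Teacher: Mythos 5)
The paper does not prove this statement; it quotes it directly from Almeida--Silva, ``On the hyperdecidability of semidirect products of pseudovarieties'' (cited as~\cite{silvaJA}, Corollary~5), so there is no internal proof to compare your argument against. What follows is therefore an assessment of the sketch on its own merits.

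Your outer scaffolding is sound. Strong decidability is indeed equivalent to an algorithm computing $\V$-pointlike subsets; order-computability of $\W$ does exactly give an explicitly computable $W_0=\pseudo S W$ together with the canonical relational morphism $\mu$, and your verification that $\mu$ is universal among relational morphisms of $S$ into $\W$ (factor through a choice function and the freeness of $\pseudo S S$) is correct. Forming the finite, computable derived semigroupoid $\mathcal D_\mu$ is also the right next move, and is the approach taken in the literature you are implicitly reconstructing.

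The gap is exactly where you suspected it would be, but it is larger than you suggest, and your own description of the escape route is not correct as stated. Two separate issues. First, the ``pointlike version of the Derived Category Theorem'' that you invoke — that $X$ is $\V*\W$-pointlike if and only if some coterminal family of arrows of $\mathcal D_\mu$ over a single object is $g\V$-pointlike — is not a standard statement and would itself need a proof; Tilson's theorem is a membership criterion for $S$, and transferring it to a criterion for pointlike subsets involves chasing relational morphisms of $S$ into arbitrary members of $\V*\W$ through the derived construction, not merely restating the theorem with the word ``pointlike'' inserted. Second, and more decisively, the reduction you propose from $g\V$-pointlikeness in $\mathcal D_\mu$ to $\V$-pointlikeness in finite semigroups is not justified, and your parenthetical ``controlled by the finite local semigroups of $\mathcal D_\mu$ (equivalently, by its consolidation)'' conflates two inequivalent things. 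Testing the local semigroups is a test against $\ell\V$, while $g\V\subsetneq\ell\V$ in general (this is the Tilson $g\V$-versus-$\ell\V$ phenomenon), so the local test is too weak. On the other hand, the consolidation of $\mathcal D_\mu$ has an adjoined zero and need not lie in $\V$ even when $\mathcal D_\mu\in g\V$, so a blunt ``$\V$-pointlike in the consolidation'' criterion is not obviously equivalent either and requires a careful argument about how relational morphisms into one-object members of $g\V$ interact with the zero. Until that bridge is actually built, the argument does not close; as you noted yourself, this is where all the real content of the Almeida--Silva proof lives, and at present your sketch asserts rather than establishes it.
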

\begin{thm}
  [{\cite[Theorem 15]{hyperdecidable}}]
  \label{15hyperdecidable}
  Let $\V$ be a hyperdecidable (respectively, strongly decidable)
  pseudovariety and let $\W$ be an order-computable
  pseudovariety. Then, $\V \vee \W$ is hyperdecidable (respectively,
  strongly decidable).
\end{thm}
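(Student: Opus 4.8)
The plan is to reduce any hyperdecidability (respectively, strong decidability) query for $\V\vee\W$ to finitely many such queries for $\V$, using that $\V\vee\W$ is generated by $\V\cup\W$ and that the order-computability of $\W$ makes the free pro-$\W$ semigroup on each finite alphabet finite and effectively constructible. The one structural fact I would rely on is that a pseudoidentity holds in $\V\vee\W$ precisely when it holds in both $\V$ and $\W$: since $\V\vee\W$ consists of the divisors of products $S\times T$ with $S\in\V$ and $T\in\W$, satisfaction in $\V\vee\W$ is the conjunction of satisfaction in $\V$ and in $\W$. Fixing a finite alphabet $A$ and writing $T=\pseudo AW$ for the (finite, computable) free pro-$\W$ semigroup, this says $u=_{\V\vee\W}v$ if and only if $u=_\V v$ and $\rho_\W(u)=\rho_\W(v)$ in $T$.

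Next I would set up the reduction. Let $\iS(\Gamma)=\{x_iy_i=z_i\}_{i=1}^N$ be a system of graph equations on a variable set $X$, with constraints presented (as in Remark~\ref{sec:50}, after passing to the monoid versions if the value $I$ is allowed) by a continuous homomorphism $\varphi\colon\pseudo AS\to S$ into a finite semigroup and a map $\nu\colon X\to S$. Form $\Phi=\langle\varphi,\rho_\W\rangle\colon\pseudo AS\to S\times T$, which is effectively given since $\rho_\W$ is determined by its values on $A$ and $T$ is computable. I claim that $\iS(\Gamma)$ has a solution modulo $\V\vee\W$ satisfying $(\varphi,\nu)$ if and only if there is a map $\theta\colon X\to T$ with $\theta(x_i)\theta(y_i)=\theta(z_i)$ in $T$ for all $i$ and such that $\iS(\Gamma)$ has a solution modulo $\V$ satisfying the constraint $(\Phi,\mu_\theta)$, where $\mu_\theta(x)=(\nu(x),\theta(x))$. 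To see the forward direction, from a solution $\delta$ modulo $\V\vee\W$ I would take $\theta=\rho_\W\circ\delta|_X$; then $\theta$ satisfies the displayed equalities in $T$ because $\delta$ solves the system modulo $\W$, and $\delta$ itself solves the system modulo $\V$ with $\Phi(\delta(x))=(\varphi(\delta(x)),\rho_\W(\delta(x)))=\mu_\theta(x)$. For the converse, given $\theta$ and a solution $\delta$ modulo $\V$ with constraint $(\Phi,\mu_\theta)$, the constraint forces $\rho_\W\circ\delta|_X=\theta$, so $\rho_\W(\delta(x_iy_i))=\theta(x_i)\theta(y_i)=\theta(z_i)=\rho_\W(\delta(z_i))$; thus the equations hold modulo $\W$ as well as modulo $\V$, hence modulo $\V\vee\W$, while the first coordinate of $\Phi$ records that $\delta$ meets $(\varphi,\nu)$.

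Granting the claim, decidability follows: there are only finitely many maps $\theta\colon X\to T$, for each one the condition $\theta(x_i)\theta(y_i)=\theta(z_i)$ is a finite check in the computable semigroup $T$, and for every surviving $\theta$ the remaining question is a hyperdecidability query for $\V$ on the same system (the underlying graph is unchanged, so the rank is unchanged) with the effectively presented constraint $(\Phi,\mu_\theta)$; running the algorithm for $\V$ on each and taking the disjunction decides the original query. The strongly decidable case is identical, taking $\iS(\Gamma)$ to be a system of pointlike equations: the edge constraints $K_e=\{I\}$ force $\delta(e)=I$ and hence $\theta(e)=I$, so the reduced problem for $\V$ is again a system of pointlike equations with the augmented vertex constraints, and strong decidability of $\V$ applies. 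I expect the genuine content to lie not in any hard estimate but in the bookkeeping around the claim: verifying that the ``$\W$-shadow'' $\theta$ of a solution is well defined and ranges over the finite set of maps $X\to T$, that pinning down $\theta$ through the constraint $(\Phi,\mu_\theta)$ forces the equations to hold modulo $\W$ automatically, and that the relevant class of systems (graph, respectively pointlike) and the admissible form of the constraints are preserved by the reduction.
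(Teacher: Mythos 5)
The paper does not prove this theorem; it is quoted verbatim from Almeida's \emph{Hyperdecidable pseudovarieties and the calculation of semidirect products} (cited as \cite{hyperdecidable}), so there is no in-paper proof to compare against. Your argument is nonetheless correct and is essentially the standard one: the key observation that $u=_{\V\vee\W}v$ iff $u=_\V v$ and $u=_\W v$ (because $\V\vee\W$ is generated by $\V\cup\W$) lets you fold the finite, effectively computable semigroup $\pseudo AW$ into the constraint homomorphism, enumerate the finitely many possible $\W$-shadows $\theta\colon X\to\pseudo AW$, discard those failing the edge equations in $\pseudo AW$, and hand each survivor to the hyperdecidability (resp.\ strong decidability) oracle for $\V$ with the enriched constraint $\bigl(\langle\varphi,\rho_\W\rangle,\mu_\theta\bigr)$; the graph (hence the rank) and, in the pointlike case, the $K_e=\{I\}$ edge constraints are preserved, so the class of systems is unchanged. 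This matches the intended reduction in \cite{hyperdecidable}.
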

\begin{thm}[{\cite[Theorem 4.1]{pinWEIL} and \cite[Theorem 4.2]{tameness}}]
  \label{t:1}
  If $\V$ is decidable and $\W$ is $\iC$-decidable for $\iC$
  consisting of systems of the form $x_1 = \cdots = x_n = x_n^2$, then
  $\V \malcev \W$ is decidable.
\end{thm}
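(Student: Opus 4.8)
The plan is to deduce the result from the description, due to Pin and Weil, of a basis of pseudoidentities for a Mal'cev product, thereby reducing the membership problem for $\V \malcev \W$ to finitely many instances of the membership problem for $\V$ together with finitely many instances of the decision problem witnessing the $\iC$-decidability of $\W$.

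First, I would fix a basis $\{u_\iota = v_\iota : \iota \in I\}$ of pseudoidentities for $\V$ and recall from~\cite[Theorem 4.1]{pinWEIL} that $\V \malcev \W$ is then defined by all pseudoidentities obtained by substituting, in a member $u_\iota = v_\iota$ of this basis whose variables are listed as $x_1, \ldots, x_n$, each $x_j$ by a pseudoword $w_j$, subject to the requirement that $(w_1, \ldots, w_n)$ be a solution modulo $\W$ of the system $x_1 = \cdots = x_n = x_n^2$, i.e.\ that $\W$ satisfies $w_1 = \cdots = w_n = w_n^2$; note that this system belongs to $\iC$. Next, given a finite semigroup $S$, say generated by a finite set $A$, I would let $\varphi : \pseudo AS \to S$ be the corresponding projection, extended to $(\pseudo AS)^I$ as in Remark~\ref{sec:50}. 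Because $\varphi$ is a continuous homomorphism, the value in $S$ of an implicit operation applied to a tuple of pseudowords depends only on the $\varphi$-images of the entries; combining this with the stability of pseudoidentities under substitution and with the surjectivity of $\varphi$, a routine verification shows that $S$ satisfies all of the pseudoidentities just described if and only if the following holds: for every subset $\{s_1, \ldots, s_n\}$ of $S$ such that the system $x_1 = \cdots = x_n = x_n^2$ admits a solution $\delta$ modulo $\W$ with $\varphi(\delta(x_i)) = s_i$ for all $i$, the subsemigroup $\langle s_1, \ldots, s_n\rangle$ of $S$ belongs to $\V$. Hence $S \in \V \malcev \W$ precisely when this condition is met.

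This criterion involves only finitely many conditions, since $S$ has only finitely many subsets. For each of them, whether the corresponding system of $\iC$ is solvable modulo $\W$ under the constraints $\varphi(\delta(x_i)) = s_i$ can be decided because $\W$ is $\iC$-decidable, and whether the spanned subsemigroup lies in $\V$ can be decided because $\V$ is decidable. Carrying out these finitely many tests settles membership in $\V \malcev \W$.

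The step I expect to be the main obstacle is extracting the precise form of Pin and Weil's pseudoidentities, and in particular checking that the admissible substitution tuples are exactly the solutions modulo $\W$ of the systems of $\iC$ (so that, for instance, the $w_j$ must be equal to one another modulo $\W$, not merely individually idempotent modulo $\W$). A route that avoids quoting~\cite{pinWEIL} is to argue directly with relational morphisms: every $\W$-pointlike configuration of $S$ is realised through the relational morphism $S \rightsquigarrow \pseudo AW$ induced by $\varphi$ and $\rho_\W$, whose idempotent preimages are exactly the subsemigroups $\langle s_1, \ldots, s_n\rangle$ appearing above, and one then invokes the compactness argument of~\cite[Theorem 4.2]{tameness} to replace the profinite semigroup $\pseudo AW$ by a suitable finite quotient in $\W$. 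Either way one arrives at the same reduction.
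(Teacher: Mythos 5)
The paper does not prove this theorem; it imports it with a citation to Pin--Weil~\cite[Theorem~4.1]{pinWEIL} and Almeida's survey~\cite[Theorem~4.2]{tameness}, so there is no ``paper's own proof'' to compare against. Your reconstruction is correct and follows exactly the route those references take: Pin--Weil's basis of pseudoidentities for $\V \malcev \W$ (obtained by substituting into a basis for $\V$ tuples of pseudowords that are simultaneously $\W$-equivalent and $\W$-idempotent, i.e.\ solutions modulo $\W$ of the systems $x_1 = \cdots = x_n = x_n^2$), followed by the observation that the achievable tuples in a fixed finite $S$ are closed under forming further products, so that satisfying the Pin--Weil pseudoidentities is equivalent to requiring $\langle s_1,\ldots,s_n\rangle \in \V$ for each achievable tuple, and this is a finite list of tests each reducible to one instance of $\V$-membership plus one instance of the $\iC$-decidability of $\W$ with constraints $\varphi(\delta(x_i)) = s_i$. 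The only wrinkle is the casual conflation of subsets with tuples, but since achievability is invariant under reordering and repetition of the $s_i$'s, nothing is lost. Your worry about the exact form of the admissible substitutions is resolved correctly: it is indeed simultaneous $\W$-equality together with $\W$-idempotency (not idempotency alone), which is precisely what makes the class $\iC$ of idempotent pointlike systems the right one.
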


We call systems of equations of the form
exhibited in Theorem \ref{t:1} \emph{systems of
  idempotent pointlike equations}.

However, since the semigroups $\pseudo AV$ are very often uncountable,
it is in general hard to say whether a pseudovariety $\V$ is
$\iC$-decidable, for a given class of systems $\iC$.
That was the motivation for the emergence of the next few concepts.

Given a class $\iC$ of finite systems of $\sigma$-equations, we say that a pseudovariety
$\V$ is \emph{$\sigma$-reducible with respect to $\iC$} (or simply,
\emph{$\sigma$-reducible for $\iC$})
provided a solution
modulo $\V$ of a system in $\iC$ guarantees the existence of a
solution modulo $\V$ of that system given by $\sigma$-words.
The pseudovariety $\V$ is said to be \emph{$\sigma$-reducible}
if it
is $\sigma$-reducible for the class of finite systems of graph
equations and it is
\emph{completely $\sigma$-reducible}
if it is $\sigma$-reducible for the class of all
finite systems of $\sigma$-equations.
The following result involves the notion of reducibility.
\begin{prop}
  [{\cite[Proposition 10.2]{unified_theory}}]
  \label{p:6}
  If $\V$ is $\sigma$-reducible with respect to the equation $x = y$,
  then $\V$ is $\sigma$-equational.
\end{prop}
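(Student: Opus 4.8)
The plan is to recall that $\V$ is \emph{$\sigma$-equational} precisely when it admits a basis of $\sigma$-identities, and to produce such a basis directly from the hypothesis. Write $\V_\sigma$ for the pseudovariety defined by \emph{all} the $\sigma$-identities valid in $\V$, that is, by the set of all formal equalities $u = v$ with $u, v \in \pseudosig AS$ (over the finite alphabets $A$) satisfying $u =_\V v$. Since every semigroup in $\V$ satisfies each such $\sigma$-identity, one always has $\V \subseteq \V_\sigma$; and $\V$ is $\sigma$-equational if and only if $\V = \V_\sigma$, because any basis of $\sigma$-identities for $\V$ consists of $\sigma$-identities valid in $\V$ and hence is contained in the defining set of $\V_\sigma$. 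So it suffices to prove the reverse inclusion $\V_\sigma \subseteq \V$, i.e.\ that every finite semigroup $S \notin \V$ fails some $\sigma$-identity valid in $\V$.

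So let me fix such an $S$. By Reiterman's theorem, $\V$ is defined by a set of pseudoidentities, so from $S \notin \V$ we obtain a pseudoidentity $u = v$, with $u, v \in \pseudo AS$ for some finite alphabet $A$, such that $u =_\V v$ while $S$ does not satisfy $u = v$; the latter provides a continuous homomorphism $\varphi \colon \pseudo AS \to S$ with $\varphi(u) \neq \varphi(v)$. If the constraint formalism requires it, we may moreover assume, by Remark~\ref{sec:50}, that $S$ has a content function.

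Now I would feed this data into the reducibility hypothesis. Consider the one-equation system $x = y$ over the alphabet $A$, with the constraint pair $(\varphi, \nu)$ where $\nu(x) = \varphi(u)$ and $\nu(y) = \varphi(v)$. The continuous homomorphism $\delta \colon \pseudo{\{x,y\}}S \to \pseudo AS$ determined by $\delta(x) = u$ and $\delta(y) = v$ is a solution modulo $\V$ of this system satisfying the given constraints: condition~\ref{s1} amounts to $u =_\V v$, and condition~\ref{s3} amounts to $\varphi(u) = \nu(x)$ and $\varphi(v) = \nu(y)$, all of which hold by construction. By $\sigma$-reducibility of $\V$ with respect to the equation $x = y$, the system then has a solution in $\sigma$-words: there are $u', v' \in \pseudosig AS$ with $u' =_\V v'$, $\varphi(u') = \varphi(u)$ and $\varphi(v') = \varphi(v)$.

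Finally, the restriction of $\varphi$ to $\pseudosig AS$ is a continuous homomorphism into $S$ that separates $u'$ and $v'$, since $\varphi(u') = \varphi(u) \neq \varphi(v) = \varphi(v')$; hence $S$ does not satisfy the $\sigma$-identity $u' = v'$, whereas $\V$ does, because $u' =_\V v'$. Thus $S \notin \V_\sigma$, and as $S$ was an arbitrary finite semigroup outside $\V$, this gives $\V_\sigma \subseteq \V$, whence $\V = \V_\sigma$ and $\V$ is $\sigma$-equational. The whole argument is a translation of definitions; the only points requiring a little care are the passage from ``$S \notin \V$'' to a pseudoidentity valid in $\V$ but failing in $S$ (together with a witnessing continuous homomorphism), and the observation that exactly this data is an instance of the one-equation system whose solvability modulo $\V$ the hypothesis lets us witness by $\sigma$-words — I do not expect any real obstacle here.
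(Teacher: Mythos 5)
Your argument is correct. Note that the paper does not prove this proposition; it is cited from Almeida's survey (Proposition~10.2 of the ``unified theory'' reference), and what you wrote is exactly the standard argument one finds there: from $S \notin \V$ extract a pseudoidentity $u = v$ valid in $\V$ and a continuous $\varphi \colon \pseudo AS \to S$ separating $u$ and $v$, feed the one-equation system $x = y$ with constraints $(\varphi,\nu)$, $\nu(x)=\varphi(u)$, $\nu(y)=\varphi(v)$, into $\sigma$-reducibility, and observe that the resulting $\sigma$-words $u',v'$ give a $\sigma$-identity valid in $\V$ that $\varphi$ shows fails in $S$, so $S \notin \V_\sigma$. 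The only small blemish is the aside about assuming $S$ has a content function via Remark~\ref{sec:50}: replacing $S$ by a larger semigroup with a content function would change the semigroup you are trying to exclude from $\V_\sigma$, so that reduction is not actually available here. Fortunately you never use it, as the constraint formalism of the paper requires no content function, so the proof stands as written.
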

Since we are aiming to achieve decidability results for $\V$, it is
reasonable 
to require that~$\V$ is recursively enumerable and that
$\sigma$ is \emph{highly computable},
meaning that it is
a recursively enumerable set and that all of its elements are computable
operations.
Henceforth, we make this assumption without further mention.
Also, we should be able to decide whether two given
$\sigma$-words have the same value over $\V$, the so-called
\emph{$\sigma$-word problem}.
When $\sigma = \kappa$ is the canonical implicit signature consisting
of the multiplication and of the $(\omega-1)$-power, it is possible to
characterize decidability of the $\kappa$-word problem for
pseudovarieties of the form $\DRH$ in terms of the same
property for $\h$.
\begin{thm}[{\cite[Chapter 3]{phd}}]\label{t:7}
  Let $\h$ be a pseudovariety of groups. Then, the pseudovariety
  $\DRH$ has decidable $\kappa$-word problem if and only if so has $\h$.
\end{thm}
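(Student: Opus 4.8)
The plan is to prove the two implications separately; the implication from $\DRH$ to $\h$ is the soft direction, while the converse carries the real content.

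\emph{From $\DRH$ to $\h$.} Suppose $\DRH$ has decidable $\kappa$-word problem and let $u,v$ be $\kappa$-words over a finite alphabet $A$. If $c(u)\cup c(v)=\emptyset$ then $u=v=I$ and there is nothing to decide; otherwise set $B:=c(u)\cup c(v)$ and let $e$ be an idempotent $\kappa$-word with $\cum e=c(e)=B$, for instance $e=\bigl(\prod_{b\in B}b\bigr)^{\omega}$. Since $c(u),c(v)\subseteq B=\cum e$, I would invoke Lemma~\ref{sec:16} with its ``$u$'' and ``$v$'' both taken equal to $e$ and its ``$u_0$'', ``$v_0$'' taken to be $u$ and $v$; it yields
\[
eu=_{\DRH}ev\iff eu=_{\h}ev,
\]
the intervening requirement ``$e\Req e$ modulo $\DRH$'' being trivially met. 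Now $\rho_{\h}(e)$ is an idempotent of the free pro-$\h$ group $\pseudo BH$, hence its identity, so the right-hand side says exactly $\rho_{\h}(u)=\rho_{\h}(v)$, that is, $u=_{\h}v$. Therefore $u=_{\h}v$ if and only if $eu=_{\DRH}ev$, and the latter is decidable by hypothesis, $eu$ and $ev$ being effectively computable $\kappa$-words. Hence $\h$ has decidable $\kappa$-word problem.

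\emph{From $\h$ to $\DRH$.} Suppose now $\h$ has decidable $\kappa$-word problem and let $u,v$ be $\kappa$-words over $A$ (the case where one of them is empty being immediate). As $\rho_{\h}$ is a continuous homomorphism, it respects the operations of $\kappa$, so deciding $u=_{\h}v$ is an instance of the $\kappa$-word problem for $\h$ and thus decidable. By Lemma~\ref{sec:16}, applied with $u_0=v_0=I$, it remains to decide $u\Req v$ modulo $\DRH$, which by Lemma~\ref{sec:13} means checking $\lbf[k]u=_{\DRH}\lbf[k]v$ for every $k\ge1$. The plan is to replace this a priori infinite system by a finite, effective one. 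First I would show that, for a $\kappa$-word $u$, the iterated left basic factors $\lbf[k]u$ are again $\kappa$-words that can be computed from $u$, and that the sequence $(\lbf[k]u)_{k\ge1}$ is \emph{eventually periodic} in the following sense: there exist computable $N,p\ge1$ such that, for $k\ge N$, the word $\lbf[k+p]u$ is obtained from $\lbf[k]u$ by right-multiplication by a fixed factor, all taking place inside one regular $\Heq$-class of $\pseudo A{DRH}$ --- which, by Proposition~\ref{p:3}, is a free pro-$\h$ group on its content --- this factor being read off from the $\omega$- and $(\omega-1)$-powers occurring in $u$. Granting this, the conditions $\lbf[k]u=_{\DRH}\lbf[k]v$ for all $k$ reduce to: the finitely many of them with $k<N+p$ --- each an instance of the $\kappa$-word problem for $\DRH$, to which one applies a well-founded induction (the pertinent well-foundedness being furnished by Proposition~\ref{p:2}) --- together with the equality, modulo $\h$, of the finitely many group words that encode the periodic parts, decidable by the hypothesis on $\h$ through Proposition~\ref{p:3}. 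Combining this with the decidability of $u=_{\h}v$ gives decidability of $u=_{\DRH}v$.

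The hard part will be the second implication, and inside it the claim that the sequence of iterated left basic factors of a $\kappa$-word is effectively eventually periodic. Proving this requires a careful analysis of how the left basic factorization cuts through the $\omega$- and $(\omega-1)$-powers of a $\kappa$-word, the isolation of the finitely many ``group parameters'' so produced, and the verification that all but finitely many of the conditions $\lbf[k]u=_{\DRH}\lbf[k]v$ collapse to equalities of those parameters modulo $\h$. Once this combinatorial description of the $\Req$-classes of $\kappa$-words modulo $\DRH$ is available, the theorem follows by feeding it into Lemmas~\ref{sec:13} and~\ref{sec:16}; by comparison, the first implication is routine.
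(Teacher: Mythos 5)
The paper itself gives no proof of Theorem~\ref{t:7}; it is cited from the author's thesis \cite[Chapter~3]{phd}, so there is no in-paper argument to compare yours with. On its own terms, your first implication is correct and clean: picking $e=\bigl(\prod_{b\in B}b\bigr)^{\omega}$ so that $\cum{e}=c(e)=B$ and applying Lemma~\ref{sec:16} with both of its ``$u,v$'' slots filled by $e$ and with $u_0,v_0$ the given $\kappa$-words does reduce the $\kappa$-word problem over $\h$ to that over $\DRH$.

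The second implication, however, has gaps beyond the one you acknowledge. The central claim, that the sequence $(\lbf[k]{u})_{k\ge1}$ attached to a $\kappa$-word $u$ is effectively eventually periodic, is asserted rather than proved, and it is precisely the technical heart of the result; moreover its very statement is garbled. You say $\lbf[k+p]{u}$ is obtained from $\lbf[k]{u}$ by right-multiplication by a fixed factor, whereas what one in fact expects is $\lbf[k+p]{u}=_{\DRH}\lbf[k]{u}$: for instance, for $u=(a^{\omega}b^{\omega})^{\omega}$ the sequence is $a^{\omega}b,\ b^{\omega-1}a,\ a^{\omega-1}b,\ b^{\omega-1}a,\ a^{\omega-1}b,\ \dots$; the right-multiplication picture belongs to the partial products $\lbf[1]{u}\cdots\lbf[k]{u}$, not to the individual factors. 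More seriously, the recursion you set up does not terminate, and Proposition~\ref{p:2} does not rescue it. Each $\lbf[k]{u}=u_k'a_k$ satisfies $a_k\notin c(u_k')$, so $\lbf[1]{\lbf[k]{u}}=\lbf[k]{u}$; feeding the pair $\lbf[k]{u},\lbf[k]{v}$ back into Lemmas~\ref{sec:16} and~\ref{sec:13} therefore returns the very same pair, and the well-founded order of Proposition~\ref{p:2} offers no descent: iterated left basic prefixes move you towards the root of the forest, and a word whose last letter is a first occurrence is a fixed point of the operation. The quantity that actually decreases is the content. By uniqueness of the left basic factorization over $\DRH$ one has $\lbf[k]{u}=_{\DRH}\lbf[k]{v}$ if and only if $a_k=b_k$ and $u_k'=_{\DRH}v_k'$, and $\card{c(u_k')}<\card{c(u)}$; so one must strip the marker before recursing and induct on $\card{c(u)}$. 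Without this refinement, and without an actual proof of effective eventual periodicity (which in the thesis is obtained through an automata-theoretic description of $\kappa$-terms over $\DRH$, in the spirit of the Almeida--Zeitoun treatment of $\R$), the second direction is not established.
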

We say that $\V$
is \emph{$\sigma$-tame with respect to $\iC$},
for a highly computable
implicit signature~$\sigma$, if it is $\sigma$-reducible for~$\iC$ and
has decidable $\sigma$-word problem.
We say that $\V$ is \emph{$\sigma$-tame}
(respectively, \emph{completely
  $\sigma$-tame}) when it is $\sigma$-tame with respect to the class of
finite systems of graph equations (respectively, to the class of all
finite systems of $\sigma$-equations).
\begin{thm}
  [{\cite[Theorem 10.3]{unified_theory}}]\label{t:5}
  Let $\iC$ be a recursively enumerable class of finite systems of
  $\sigma$-equations, without parameters. If $\V$ is a pseudovariety
  which is $\sigma$-tame with respect to $\iC$, then $\V$ is
  $\iC$-decidable.
\end{thm}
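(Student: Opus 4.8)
\medskip

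The plan is to produce, from the standing hypotheses ($\V$ recursively enumerable, $\sigma$ highly computable, $\iC$ recursively enumerable and parameter-free) together with the assumption that $\V$ is $\sigma$-tame with respect to $\iC$ (i.e. $\V$ has decidable $\sigma$-word problem and is $\sigma$-reducible with respect to $\iC$), an algorithm deciding whether a given system $\iS = \{u_i = v_i \colon i = 1, \ldots, n\}$ from $\iC$, together with constraints on its finite set of variables $X$, admits a solution modulo $\V$ satisfying the constraints. Following Remark~\ref{sec:50}, I encode the constraints by a continuous homomorphism $\varphi \colon \pseudo AS \to S$ into a fixed finite semigroup together with a finite family of maps $\nu_j \colon X \to S$, so that the input is entirely finitary (this is where parameter-freeness of $\iC$ is used). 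The algorithm runs two semi-procedures in parallel, one searching for a positive answer, the other for a negative one.

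The positive semi-procedure enumerates all assignments $\delta \colon X \to (\pseudo AS)^I$ in which each $\delta(x)$ is a $\sigma$-word or the empty word; this is possible because, $\sigma$ being highly computable, the set of $\sigma$-words over $A$ is recursively enumerable. For each such $\delta$ it checks that $\varphi(\delta(x)) = \nu_j(x)$ for all $x$, for some $j$, and that $\delta(u_i) =_\V \delta(v_i)$ for every $i$. The first check is effective since the operations of $\sigma$ are computable, so $\varphi(\delta(x))$ is obtained by evaluating the $\sigma$-word $\delta(x)$ in the finite semigroup $S$; the second is effective by decidability of the $\sigma$-word problem for $\V$. The procedure halts with ``yes'' as soon as some $\delta$ passes both checks. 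By $\sigma$-reducibility of $\V$ with respect to $\iC$, whenever $\iS$ has a solution modulo $\V$ satisfying the constraints it has one in $\sigma$-words, so this semi-procedure halts precisely when the answer is ``yes''.

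The negative semi-procedure uses recursive enumerability of $\V$ to run through all pairs $(T, \psi)$ with $T$ a finite semigroup of $\V$ and $\psi \colon \pseudo AS \to T$ a continuous homomorphism --- finitely many $\psi$ for each $T$, as $A$ is finite. For each pair it forms the (effectively computable) finite subsemigroup of $T \times S$ generated by the images of $A$ under $(\psi, \varphi)$ and decides, by a finite search, whether there is an assignment of values to $X$ compatible with some $\nu_j$ through $\varphi$ and such that the induced homomorphism $\pseudo XS \to T$ sends $u_i$ and $v_i$ to the same element for every $i$ (evaluating the $\sigma$-words $u_i$ and $v_i$ in the finite semigroup $T$ is again effective). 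If for some $(T, \psi)$ there is no such assignment, the procedure halts with ``no''. Such an answer is correct because a genuine solution $\delta$ modulo $\V$ satisfying the constraints would, via $(\psi, \varphi) \circ \delta$, produce such an assignment, using that $\psi$ factors through $\rho_\V$ since $T \in \V$.

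It remains to see that exactly one of the two semi-procedures halts; since the positive one halts iff a solution exists, the point is that the negative one halts whenever there is \emph{no} solution modulo $\V$ satisfying the constraints, and I expect this to be the main obstacle. The set of constraint-compatible assignments $X \to (\pseudo AS)^I$ is a product of clopen sets, hence compact, and for each pair $(T, \psi)$ the set of those assignments for which $\psi$ identifies all the $u_i$ with the corresponding $v_i$ is closed, because $T$ is finite (so Hausdorff) and evaluation of a fixed $\sigma$-word is continuous in the assignment. If no solution exists, the intersection of all these closed sets --- over all $(T, \psi)$ --- is empty, since equality modulo $\V$ of two pseudowords is detected by the continuous homomorphisms of $\pseudo AS$ into finite semigroups of $\V$; by compactness some finite subfamily already has empty intersection, and bundling the corresponding finitely many homomorphisms into a single homomorphism into their product, which again belongs to $\V$, yields a pair $(T, \psi)$ on which the negative semi-procedure halts. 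Beyond this compactness argument, what remains is the routine verification that every effectiveness ingredient invoked above (enumerability of $\V$ and of the $\sigma$-words, decidability of the $\sigma$-word problem, computability of evaluation in finite semigroups and of the relevant finite subsemigroups) is genuinely available under the standing assumptions. Once all of this is in place, $\V$ is $\iC$-decidable.
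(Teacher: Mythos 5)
The paper does not prove Theorem~\ref{t:5}; it is cited from \cite[Theorem~10.3]{unified_theory} and used as a black box. Your reconstruction is correct and follows the standard argument in that source: a positive semi-procedure that enumerates $\sigma$-word assignments and verifies them via the decidable $\sigma$-word problem (with $\sigma$-reducibility guaranteeing completeness), run in parallel with a negative semi-procedure that enumerates finite members of $\V$ and their $A$-generated quotients, with termination of the negative branch secured by the compactness/inverse-limit argument you give (the only cosmetic slip is attributing the reduction of constraints to finite data to parameter-freeness, when that is really Hunter's Lemma; parameter-freeness is what you in fact use to evaluate $u_i,v_i$ effectively in finite semigroups and to keep $\delta(u_i)$ a $\sigma$-word).
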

Despite being a stronger requirement, it is sometimes easier to prove
that a given pseudovariety is tame with respect to~$\iC$, rather than
its
$\iC$-decidability.

We end this subsection with a list of decidability results concerning
some pseudovarieties of groups, to which we refer later.

\begin{thm}\label{t:6}
  We have the following:
  \begin{itemize}
  \item the pseudovariety ${\sf Ab}$ is completely $\kappa$-tame
    (\cite{MR2142087});
  \item the pseudovariety ${\sf G}$ is $\kappa$-tame
    (\cite{MR1112302} and \cite[Theorem 4.9]{steinbergJA}),
    but it is not completely $\kappa$-reducible (\cite{MR1485465});
  \item for every extension closed pseudovariety of groups
    $\h$,
    there
    exists an implicit signature $\sigma(\h)$ such that $\h$ is
    $\sigma(\h)$-reducible (\cite{MR1859280});
  \item no proper subpseudovariety of $\G$ containing a
      pseudovariety $\G_p$ (for a
      certain prime~$p$) is $\kappa$-reducible (Proposition \ref{p:6}
      and \cite{MR0179239});
  \item no proper non locally finite subpseudovariety of ${\sf Ab}$ is
    $\kappa$-reducible (\cite{MR2364777}).
  \end{itemize}
\end{thm}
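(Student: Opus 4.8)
The statement is a compilation of facts that are, for the most part, already available in the literature; the plan is therefore to run through its five items in order, indicating for each the reference it is drawn from and supplying the short bridging argument in the two items where one is needed.

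The first three items are direct appeals to the literature. Complete $\kappa$-tameness of $\Ab$ is the main result of \cite{MR2142087}. For the second item, $\kappa$-tameness of $\G$ unfolds, by definition, into decidability of the $\kappa$-word problem for $\G$---that is, solvability of the word problem in free $\kappa$-groups, obtained via \cite{MR1112302}---together with $\kappa$-reducibility of $\G$ with respect to finite systems of graph equations, which is \cite[Theorem 4.9]{steinbergJA} (ultimately a consequence of Ash's inevitability theorem); the failure of complete $\kappa$-reducibility of $\G$ is witnessed by the system of $\kappa$-equations produced in \cite{MR1485465}, which admits a solution modulo $\G$ but none in $\kappa$-words. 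The third item is \cite{MR1859280}: for an extension-closed $\h$ one enlarges $\kappa$ to an implicit signature $\sigma(\h)$ tailored to $\h$, with respect to which reducibility holds.

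The fourth item I would obtain by contradiction, with the prime $p$ fixed as in \cite{MR0179239}. Suppose some proper subpseudovariety $\h$ of $\G$ with $\G_p \subseteq \h$ were $\kappa$-reducible---that is, $\kappa$-reducible for the class of finite systems of graph equations. The single equation $x = y$ can be encoded as the system of graph equations given by $xe = z$ and $yf = z$, where the auxiliary edge variables $e$ and $f$ are constrained to $\{I\}$ and the other variables carry compatible constraints; hence $\h$ would also be $\kappa$-reducible with respect to the equation $x = y$, and Proposition \ref{p:6} would force $\h$ to be $\kappa$-equational, i.e.\ definable by a set of $\kappa$-pseudoidentities. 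This contradicts \cite{MR0179239}, from which no such $\h$ admits a basis of $\kappa$-pseudoidentities. The fifth item is of the same nature and is read off directly from \cite{MR2364777}, where a proper, non locally finite subpseudovariety of $\Ab$ is shown not to be $\kappa$-reducible (equivalently, if one prefers, through the absence of a $\kappa$-equational description together with Proposition \ref{p:6}).

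There is no genuine difficulty here, the statement being a survey: the only point requiring more than a bare citation is the passage, in the fourth and fifth items, from $\kappa$-reducibility for systems of graph equations to $\kappa$-reducibility for the equation $x = y$, and from there---via Proposition \ref{p:6}---to $\kappa$-equationality; this is the (mild) crux, the delicate part being to invoke the cited negative results in exactly the form that denies the existence of a $\kappa$-basis. Everything else amounts to matching each clause of the statement with its source.
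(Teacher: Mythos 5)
The paper itself gives no proof of Theorem~\ref{t:6}; it is a survey statement whose entire justification is the set of citations listed inline, so there is no "paper proof" to diverge from. Your proposal matches the intended reading: the first, third, and fifth items are direct citations, the second item splits $\kappa$-tameness of $\G$ into word-problem decidability plus graph-reducibility with an explicit counterexample for complete reducibility, and for the fourth item you correctly supply the only bridging step — encode $\{x=y\}$ as a graph system with the auxiliary edges constrained to $\{I\}$, deduce $\kappa$-equationality from Proposition~\ref{p:6}, and contradict Baumslag's residual $p$-finiteness result. This is exactly the chain the parenthetical "(Proposition~\ref{p:6} and~\cite{MR0179239})" is signalling.
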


\section{Pointlike equations}\label{sec:1003}

Throughout this section, we shall assume that
$\sigma$ contains a non-explicit operation. In other words, that means
that $\langle\sigma\rangle \neq \langle\{\_\cdot\_\}\rangle$.
Clearly, that is the
case of the canonical implicit signature $\kappa$.

Propositions~\ref{4hyperdecidable} and~\ref{p:6}
motivate us to take for $\iC$ in Question~\eqref{q1} the class of all finite systems of
pointlike equations.
To guarantee that $\DRH$ is $\sigma$-reducible
for $\iC$, it suffices to suppose that $\h$ is $\sigma$-reducible
for~$\iC$ as well.

\begin{thm}\label{pointlike}
  Let $\sigma$ be an implicit signature
  containing a non-explicit operation, and
  assume that $\h$ is a pseudovariety of groups that is
  $\sigma$-reducible
  for finite systems of pointlike equations. Then, the pseudovariety
  $\DRH$ is also $\sigma$-reducible for finite systems of pointlike
  equations.
\end{thm}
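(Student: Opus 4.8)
The plan is to reduce, via the structure theory of $\pseudo A{DRH}$ recalled above, to a statement about a single chain of equal variables, and then to peel off first occurrences of letters by induction. A system of pointlike equations is a system $\iS(\Gamma)$ in which every edge variable is constrained to $\{I\}$; evaluating the edges at $I$, a solution modulo $\DRH$ makes all vertex variables of a fixed connected component of $\Gamma$ share one value in $\pseudo A{DRH}$. Treating components separately and encoding the constraints by a single pair $(\varphi,\nu)$ with $\varphi\colon\pseudo AS\to S$ into a finite semigroup carrying a content function (Remark~\ref{sec:50}), it suffices to prove: if $w_1,\dots,w_n\in(\pseudo AS)^I$ satisfy $w_1=_\DRH\cdots=_\DRH w_n$ and $\varphi(w_i)$ is prescribed, then there are $\sigma$-words $w_1',\dots,w_n'$ with $w_1'=_\DRH\cdots=_\DRH w_n'$ and $\varphi(w_i')=\varphi(w_i)$. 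I shall use freely the two decompositions of $\DRH$-equality coming from Lemmas~\ref{sec:13} and~\ref{sec:16}: that $u=_\DRH v$ amounts to $u\Req v$ modulo $\DRH$ together with $u=_\h v$, and that $u\Req v$ modulo $\DRH$ is exactly the coincidence of $\lbf[k]u$ and $\lbf[k]v$ modulo $\DRH$ for all $k$.

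First I would induct on $\card{c(\bar w)}$, where $\bar w:=\rho_\DRH(w_i)$ is the common value. If $\bar w=I$ then, content being a $\DRH$-invariant, each $w_i=I$ and we are done. Otherwise take the first-occurrences factorizations $w_i=a_1v_{i,1}a_2v_{i,2}\cdots a_mv_{i,m}$ of Lemma~\ref{c:5}; projecting to $\DRH$ and using both the uniqueness in Lemma~\ref{c:5} over $\DRH$ and the $\DRH$-invariance of content, one sees that the letter sequence $a_1,\dots,a_m$ does not depend on $i$ and that, for each $j$, the family $(v_{i,j})_i$ is again a family of mutually $\DRH$-equal pseudowords. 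For $j<m$ one has $c(v_{i,j})\subseteq\{a_1,\dots,a_j\}\subsetneq c(\bar w)$, so the induction hypothesis applies to $(v_{i,j})_i$; likewise for $j=m$ when $c(v_{i,m})\subsetneq c(\bar w)$. Reassembling $w_i':=a_1v_{i,1}'\cdots a_mv_{i,m}'$ yields a $\sigma$-word with $\varphi(w_i')=\varphi(w_i)$ (because $\varphi$ is a homomorphism and the two factorizations match letter by letter) whose $\DRH$-value, being a product of pieces with $i$-independent $\DRH$-values, is itself $i$-independent.

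The one remaining case — and the main obstacle — is the \emph{regular tail}: $c(v_{i,m})=c(\bar w)=:D$, so the tails $t_i:=v_{i,m}$ satisfy $c(t_i)=\cum{t_i}=D$ and are mutually $\DRH$-equal. By the discussion preceding Lemma~\ref{sec:16}, each $\rho_\DRH(t_i)$ lies in the maximal subgroup $H_e$ of the idempotent $e$ designated by $(\rho_\DRH(\lbf[1]{t_i})\cdots\rho_\DRH(\lbf[k]{t_i}))_k$ — one and the same $e$, since the $\lbf[k]{t_i}$ agree modulo $\DRH$ — and by Proposition~\ref{p:3}, $H_e$ is a free pro-$\h$ group on $D$ with $\rho_{\DRH,\h}|_{H_e}$ inverse to $\psi_e$. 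Here the periodicity phenomenon on the constraints enters: along the left-basic-factorization skeleton of $t_i$ the images of the factors, both modulo $\DRH$ and under $\varphi$, are eventually periodic (the latter just because $S$ is finite), so only finitely much of the skeleton is active. Replacing the strictly smaller-content factors $\lbf[j]{t_i}$ by $\sigma$-words via the induction hypothesis and bundling the periodic part into an $\omega$-power built from a non-explicit operation of $\sigma$ — this is where the hypothesis on $\sigma$ is essential, a finite word being unable to represent a regular element of full content — one produces a $\sigma$-word $\pi_i'$ whose $\DRH$-value is a common regular idempotent $e'$ with $c(e')=\cum{e'}=D$ and with $\varphi(\pi_i')$ reproducing the contribution of the skeleton of $t_i$. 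The complementary ``group part'' is then handled by invoking $\sigma$-reducibility of $\h$ for the pointlike system $z_1=\cdots=z_n$ over the alphabet $D$, constrained through $\varphi$ so as to be solved modulo $\h$ by the relevant part of the $t_i$; this yields $\sigma$-words $h_i'$ over $D$, mutually $\h$-equal. Setting $t_i':=\pi_i'h_i'$, one gets $\varphi(t_i')=\varphi(t_i)$, and since $c(h_i')\subseteq D=\cum{e'}$, the computation in the proof of Lemma~\ref{sec:16} together with Proposition~\ref{p:3} gives $\rho_\DRH(t_i')=\psi_{e'}(\rho_\h(h_i'))$, which is independent of $i$; in particular the $t_i'$ are mutually $\DRH$-equal.

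Feeding these tail replacements back into the reassembly of the second paragraph closes the induction. The delicate point is precisely the regular-tail case: one must simultaneously reproduce by a $\sigma$-word the skeleton that designates the ambient $\Heq$-class, exploit the eventual periodicity of the finite-semigroup constraints to pin down all of $\varphi$ with bounded data, and keep the $\h$-reduction of the group part compatible with both, so that the reassembled words come out mutually $\DRH$-equal and not merely $\Req$-equivalent modulo $\DRH$. Everything else — the first-occurrences recursion and the bookkeeping of the $\varphi$-values — is routine given Lemmas~\ref{sec:13} and~\ref{sec:16} and Proposition~\ref{p:3}.
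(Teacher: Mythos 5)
Your strategy matches the paper's overall plan: reduce to a single chain $x_1 = \cdots = x_n$, induct on $\card{c(\delta(x_1))}$, peel off factors of strictly smaller content, and dispose of the regular remainder by bundling a block of it into a non-explicit operation $\eta$ while invoking $\sigma$-reducibility of $\h$ on the pointlike system $z_1 = \cdots = z_n$, concluding with Lemma~\ref{sec:16}. However, your case analysis has a genuine gap.

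After taking the first-occurrences factorization $w_i = a_1 v_{i,1} \cdots a_m v_{i,m}$ of Lemma~\ref{c:5}, you dispatch the $v_{i,j}$ with $j<m$, and also $v_{i,m}$ when $c(v_{i,m}) \subsetneq D := c(\bar w)$, by the induction hypothesis, and you then assert that the only remaining case is $c(v_{i,m}) = D$, which you claim forces $c(v_{i,m}) = \cum{v_{i,m}} = D$. That implication is false. The last factor $v_{i,m}$ is precisely the right factor in the first left basic factorization of $w_i$, so $\cum{v_{i,m}} = \cum{w_i}$, and nothing prevents $c(v_{i,m}) = c(w_i)$ while $\cum{w_i} \subsetneq c(w_i)$. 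A concrete instance: for $w = abc^\omega ab$ the first-occurrences tail is $v_3 = c^{\omega-1}ab$ with $c(v_3) = \{a,b,c\} = c(w)$, yet $\cum{v_3} = \emptyset$. In this situation neither the induction hypothesis applies (the content has not dropped) nor does your regular-tail argument: $\rho_\DRH(v_3)$ is not in a group $\Heq$-class, so Proposition~\ref{p:3} is unavailable and there is no idempotent $e$ to anchor the argument.

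What is missing is the paper's first case, $\cum{\delta(x_1)} \neq c(\delta(x_1))$. There one iterates the left basic factorization finitely many times, out to $\ell = \min\{p : c(\delta(x_1)_p') \subsetneq c(\delta(x_1))\}$, and only the remainder after $\ell$ blocks has strictly smaller content; the induction is applied to that remainder. Your single first-occurrences pass does not reach this point, and iterating it on the tail would itself require a termination argument (say via the well-founded forest of Proposition~\ref{p:2}). A smaller secondary issue: you assert that $\rho_\DRH(\pi_i')$ is a regular idempotent $e'$; for $\pi_i'$ built as a finite prefix followed by $\eta(\cdot)$ this is generally false, and forcing idempotence (e.g.\ by an outer $\omega$-power) would make $\varphi(\pi_i')$ idempotent too, which need not match the prescribed constraint. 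The paper sidesteps this by only recording $\Req$-equivalence of the reassembled words modulo $\DRH$ together with their $\h$-equality, and then invoking Lemma~\ref{sec:16}; it never needs the skeleton replacement to be idempotent.
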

\begin{proof}
  Let $\iS =  \{x_{k,1} = \cdots = x_{k, n_k}\}_{k = 1}^N$ be a finite
  system of pointlike equations in the set of variables~$X$ with
  constraints given by the pair $(\varphi,\nu)$.
  Without loss of generality, we may assume that, for all $k,\ell
  \in \{1, \ldots, N\}$, with $k \neq \ell$, the subsets of variables
  $\{x_{k,1}, \ldots, x_{k, n_k}\}$ and $\{x_{\ell,1}, \ldots, x_{\ell,
    n_\ell}\}$ do not intersect. Further, with this assumption, we may
  also take $N = 1$. The general case is obtained by treating each
  system of equations $x_{k,1} = \cdots = x_{k, n_k}$ separately.
  Write $\iS = \{x_1 = \cdots = x_n\}$ and
  suppose that the continuous homomorphism $\delta:
  \pseudo XS \to (\pseudo AS)^I$ is a solution modulo $\DRH$ of $\iS$. To prove
  that $\iS$ also has a solution in $\sigma$-words we argue by
  induction on $m = |c(\delta(x_{1}))|$.%
  
  If $m = 0$, then $\delta(x_{i}) = I$ for every $i= 1, \ldots, n$ and
  $\delta$ is already a solution in $\sigma$-words.
 
  Suppose that $m > 0$ and that the statement holds for every system
  of pointlike equations with a smaller value of the parameter.
  Whenever the $p$-th iteration of the left basic factorization of
  $\delta(x_i)$ is nonempty, we write $\lbf[p]{\delta(x_i)} =
  \delta(x_i)_pa_{i,p}$ and we let $\delta(x_i)_p'$ be such that
  $$\delta(x_i) = \lbf[1] {\delta(x_i)} \cdots \lbf[p]{\delta(x_i)} \delta(x_i)_p'.$$
  Notice that the uniqueness of left basic factorizations in
  $\pseudo A{DRH}$ entails the following properties
  \begin{equation}
    \begin{split}
      a_{1,p} & = \cdots =  a_{n,p};
      \\  \delta(x_{1})_p & =_\DRH \cdots =_\DRH \delta(x_{n})_p;
      \\ \delta(x_{1})_{p}' & =_\DRH \cdots =_\DRH \delta(x_{n})_{p}'.
    \end{split}\label{eq:20}
  \end{equation}
  If $\cum {\delta(x_1) }\neq c(\delta(x_1))$, then we set $\ell =
  \min\{p \ge 1\colon c(\delta(x_1)_p') \subsetneqq c(\delta(x_1))\}$.
  Otherwise, since $S$ is finite, there exist indices $k < \ell$ such
  that, for all $i = 1, \ldots, n$, we have
  \begin{align}
    \varphi(\lbf[1]{\delta(x_i)}\cdots \lbf[k]{\delta(x_i)}) =
    \varphi(\lbf[1]{\delta(x_i)}\cdots \lbf[\ell]{\delta(x_i)}).\label{eq:1}
  \end{align}
  Let $\eta \in \langle{\sigma}\rangle$ be a non-explicit
  operation. Without loss of generality, we may assume that $\eta$
  is a unary operation. In particular, since $S$ is finite, there
  is an
  integer $M$ such that $\varphi(\eta(s)) = s^{M}$ for every $s \in S$.
  Then, equality \eqref{eq:1} yields
  \begin{equation}
    \begin{split}
      \varphi(\delta(x_{i})) &= \varphi(\lbf[1]{\delta(x_i)} \cdots
      \lbf[k]{\delta(x_i)} \cdot \eta(\lbf[k+1]{\delta(x_i)} \cdots
      \lbf[\ell]{\delta(x_i)}) \delta(x_{i})_{k}').
    \end{split}\label{eq:21}
  \end{equation}
  Now, consider a new set of variables
  $X' = \{x_{i,p},x_i' \colon i = 1, \ldots, n; \: p = 1, \ldots, \ell \}$
  and a new system of pointlike equations
  \begin{equation}
    \iS' =
    \begin{cases}
      \{x_{1,p} = \cdots = x_{n, p}\}_{p= 1}^\ell
      \cup \{x_1' = \cdots = x_n'\}, \quad\text{ if $\cum {\delta(x_1) }\neq
        c(\delta(x_1))$}
      \\ \{x_{1,p} = \cdots = x_{n, p}\}_{p= 1}^\ell,
      \quad\text{ if  $\cum{\delta(x_1)} = c(\delta(x_1))$}
    \end{cases}\label{eq:43}
  \end{equation}
  By \eqref{eq:20}, the continuous homomorphism
  $\delta': \pseudo {X'}S \to (\pseudo AS)^I$ assigning $\delta(x_i)_p$
  to each variable $x_{i,p}$ and $\delta(x_i)_\ell'$ to each variable
  $x_i'$ is a solution modulo $\DRH$ of $\iS'$,
  with constraints given by $(\varphi,\nu')$, where $\nu'(x_{i,p}) =
  \varphi(\delta(x_i)_p)$, and  $\nu'(x_{i}') =
  \varphi(\delta(x_i)_k')$ ($i = 1, \ldots, n$ and $p = 1, \ldots, \ell$).
  Moreover, whatever is the system $\iS'$ considered in \eqref{eq:43},
  we decreased the induction parameter.
  By induction hypothesis, there exists a solution modulo
  $\DRH$ of $\iS'$ in $\sigma$-words, say $\varepsilon'$, keeping the
  values of the variables under $\varphi$.
  We distinguish between the case where $\cum {\delta(x_1) }\neq
  c(\delta(x_1))$ and the case where  $\cum{\delta(x_1)} = c(\delta(x_1))$. In
  the former, it is easy to check that the continuous homomorphism
  \begin{align*}
    \varepsilon: \pseudo XS &\to (\pseudo AS)^I
    \\ x_{i} &\mapsto \varepsilon'(x_{i,1})a_{i,1} \cdots
               \varepsilon'(x_{i,\ell})a_{i,\ell} \varepsilon'(x_i')
  \end{align*}
  is a solution modulo $\DRH$ of $\iS$.
  In the latter case, we consider the system of pointlike equations $$\iS_0 =
  \{x_1' = \cdots = x_n'\}.$$
  From \eqref{eq:20}, it follows that
  $\delta'$ is a solution modulo $\h$ of $\iS_0$. As we are taking for
  $\h$ a pseudovariety that is $\sigma$-reducible for systems of
  pointlike equations,
  there exists a solution modulo $\h$ of $\iS_0$, say
  $\varepsilon''$, keeping
  the values of the variables under $\varphi$.
  Let $\varepsilon: \pseudo XS \to (\pseudo AS)^I$ be given by
  $$\varepsilon(x_{i}) = \varepsilon'(x_{i,1})a_{i,1} \cdots
  \varepsilon'(x_{i,k})a_{i,k} \cdot\eta(\varepsilon'(x_{i,k+1})a_{i,k+1}
  \cdots \varepsilon'(x_{i,\ell})a_{i,\ell})
  \varepsilon''(x_i').$$
  Since $\varepsilon'$ is a
  solution modulo $\DRH$ of $\iS'$, $\eta$ is non-explicit, and
  we are assuming that the semigroup $S$ has a content function, it
  follows that, for all $i,j \in \{1, \ldots , n\}$, the
  pseudowords~$\varepsilon(x_i)$
  and~$\varepsilon(x_j)$ are $\Req$-equivalent modulo~$\DRH$.
  On the other hand, for all $i, j \in \{1, \ldots, n\}$, the
  following equalities are valid in~$\h$:
  \begin{align*}
    \varepsilon(x_{i})
    & = \varepsilon'(x_{i,1})a_{i,1} \cdots
      \varepsilon'(x_{i,k})a_{i,k} \cdot \eta(\varepsilon'(x_{i,k+1})a_{i,k+1}
      \cdots \varepsilon'(x_{i,\ell})a_{i,\ell})
      \varepsilon''(x_i')
    \\ & = \varepsilon'(x_{j,1})a_{j,1} \cdots
         \varepsilon'(x_{j,k})a_{j,k} \cdot \eta(\varepsilon'(x_{j,k+1})a_{j,k+1}
         \cdots \varepsilon'(x_{j,\ell})a_{j,\ell})
         \varepsilon''(x_j')
    \\ &= \varepsilon(x_{j}) .
  \end{align*}
  The second equality holds because $\varepsilon'$ and
  $\varepsilon''$ are solutions modulo $\h$ of $\iS'$ and $\iS_0$, respectively.
  Therefore, Lemma \ref{sec:16} yields that $\DRH$ satisfies
  $\varepsilon(x_i) = \varepsilon(x_j)$. It remains to verify that the
  given constraints are still satisfied. But that is
  straightforwardly implied by~\eqref{eq:21}.
\end{proof}
\begin{rem}
  \label{sec:17}
  We observe that the construction performed in the proof of the previous
  theorem not only gives a solution modulo $\DRH$ in
  $\sigma$-terms of the original pointlike system of equations, but it
  also provides a solution keeping the cumulative content of each
  variable.
\end{rem}

As a consequence of Propositions \ref{p:6} and Theorem~\ref{pointlike}, we have the following.

\begin{cor}\label{c:12}
  If a pseudovariety of groups $\h$ is $\sigma$-reducible with respect
  to the equation $x = y$, then $\DRH$ is $\sigma$-equational.\qed
\end{cor}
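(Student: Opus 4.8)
This follows by combining Proposition~\ref{p:6} with the argument behind Theorem~\ref{pointlike}, as I now outline. The plan is to establish that $\DRH$ is $\sigma$-reducible with respect to the equation $x = y$, for then Proposition~\ref{p:6} yields at once that $\DRH$ is $\sigma$-equational. The subtle point is that Theorem~\ref{pointlike} does not literally apply: its hypothesis asks $\h$ to be $\sigma$-reducible for \emph{all} finite systems of pointlike equations, whereas here we are only given $\sigma$-reducibility of~$\h$ with respect to the single equation $x = y$. What I would do is revisit the proof of Theorem~\ref{pointlike} and observe that, once the input system is reduced to a single chain of length two, that argument only ever calls upon the $\sigma$-reducibility of~$\h$ for systems that are again single equations of the form $x = y$.

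Concretely, take $\iS$ to be the system $x_1 = x_2$ (that is, $n = 2$ and $N = 1$ in the notation of that proof), together with constraints given by a pair $(\varphi, \nu)$, and let $\delta$ be a solution modulo~$\DRH$. One runs the same induction on $m = |c(\delta(x_1))|$, keeping the chain length equal to $2$ at every stage. The auxiliary system $\iS'$ built in~\eqref{eq:43} is then a finite union of chains of length~$2$, each carrying a strictly smaller value of the induction parameter, so it is handled by the induction hypothesis exactly as in the original proof (treating each chain separately, as in the reduction to $N = 1$). The $\sigma$-reducibility of~$\h$ is invoked once, and only in the case $\cum{\delta(x_1)} = c(\delta(x_1))$, to solve the system $\iS_0 = \{x_1' = \cdots = x_n'\}$; for $n = 2$ this is simply an equation $x' = y'$, so our hypothesis on~$\h$ supplies the required solution $\varepsilon''$ in $\sigma$-words. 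All the remaining steps---the construction of $\varepsilon$ from $\varepsilon'$ and $\varepsilon''$, the $\Req$-equivalence argument via Lemma~\ref{sec:16}, and the verification that the constraints $(\varphi, \nu)$ are preserved---go through unchanged; note that, as assumed throughout Section~\ref{sec:1003}, $\sigma$ contains a non-explicit operation, so the operation~$\eta$ used there is available.

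Thus the construction in the proof of Theorem~\ref{pointlike}, specialised to $n = 2$, converts a solution modulo~$\DRH$ of $x = y$ into one given by $\sigma$-words, i.e.\ $\DRH$ is $\sigma$-reducible with respect to $x = y$; Proposition~\ref{p:6} then gives that $\DRH$ is $\sigma$-equational. I do not expect a genuine obstacle here: the only thing that really needs checking is the bookkeeping claim that, for a length-two chain, no step of that proof covertly requires $\sigma$-reducibility of~$\h$ for a system involving more than two variables---which, as indicated above, it does not.
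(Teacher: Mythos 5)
Your proof follows the same route the paper has in mind---combine Proposition~\ref{p:6} with the machinery behind Theorem~\ref{pointlike}---but you correctly flag a real subtlety that the paper glosses over: the corollary's hypothesis (that $\h$ is $\sigma$-reducible with respect to the single equation $x = y$) is strictly weaker than the hypothesis of Theorem~\ref{pointlike} as stated (that $\h$ is $\sigma$-reducible for \emph{all} finite systems of pointlike equations), so that theorem cannot literally be invoked. Your repair---rerun the inductive construction for $n = 2$ and check that the auxiliary system~$\iS'$ decomposes into disjoint length-two chains each of strictly smaller parameter, while the one appeal to reducibility of $\h$ (to solve $\iS_0 = \{x_1' = x_2'\}$ in the case $\cum{\delta(x_1)} = c(\delta(x_1))$) is itself an instance of $x = y$---is exactly what is needed, and is correct. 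In effect you are proving the sharper intermediate statement that $\DRH$ is $\sigma$-reducible with respect to $x = y$ whenever $\h$ is, after which Proposition~\ref{p:6} applies; this is more than the paper spells out, and a reader who takes the paper's one-line justification at face value would indeed be missing precisely the bookkeeping you supply.
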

As far as we are aware, all known examples of pseudovarieties of groups that
are $\sigma$-reducible with respect to systems of pointlike equations
are also $\sigma$-reducible.
For that reason, for now, we skip such examples, since
they illustrate stronger results in the next section.
We just point out the case of the pseudovariety ${\sf Ab}$ (recall
Theorem \ref{t:6}).
It is interesting to observe that, although $\overline \Ab$ is not a
$\kappa$-equational pseudovariety \cite[Theorem~3.1]{MR1980404}, by
Corollary \ref{c:12} the
pseudovariety ${\sf DRAb} = {\sf DRG}\cap \overline \Ab$~is.

On the other hand, taking into account Theorem~\ref{t:7}, we also have
the following.

\begin{cor}\label{c:13}
  If $\h$ is a pseudovariety of groups that is $\kappa$-tame with
  respect to finite systems of pointlike equations, then so is
  $\DRH$.\qed
\end{cor}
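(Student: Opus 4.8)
The plan is to read off the claim from two facts already established in the excerpt: Theorem~\ref{pointlike} and Theorem~\ref{t:7}. Recall that being $\kappa$-tame with respect to a class $\iC$ of finite systems of $\sigma$-equations means two things simultaneously, namely being $\kappa$-reducible with respect to $\iC$ and having decidable $\kappa$-word problem. Hence it suffices to verify each of these two properties for $\DRH$ when $\iC$ is the class of all finite systems of pointlike equations, under the hypothesis that $\h$ enjoys both of them.

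For the reducibility part, I would first observe that the canonical implicit signature $\kappa$ contains a non-explicit operation, namely the $(\omega-1)$-power, so that $\langle\kappa\rangle \neq \langle\{\_\cdot\_\}\rangle$ and the hypothesis of Theorem~\ref{pointlike} is satisfied with $\sigma = \kappa$. Since $\h$ being $\kappa$-tame for finite systems of pointlike equations entails in particular that $\h$ is $\kappa$-reducible for that class, an application of Theorem~\ref{pointlike} immediately gives that $\DRH$ is $\kappa$-reducible for finite systems of pointlike equations as well.

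For the word problem, I would simply invoke Theorem~\ref{t:7}: the hypothesis provides that $\h$ has decidable $\kappa$-word problem, and Theorem~\ref{t:7} transfers this property to $\DRH$. Combining the two conclusions, $\DRH$ is both $\kappa$-reducible for finite systems of pointlike equations and has decidable $\kappa$-word problem, that is, it is $\kappa$-tame with respect to finite systems of pointlike equations. There is no genuine obstacle in this argument — all the content lies in Theorems~\ref{pointlike} and~\ref{t:7} — and the only point worth making explicit is the trivial check that $\kappa$ qualifies as a signature with a non-explicit operation, so that Theorem~\ref{pointlike} is indeed applicable.
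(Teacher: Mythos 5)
Your argument is exactly the one the paper intends: combine Theorem~\ref{pointlike} (which transfers $\kappa$-reducibility for pointlike systems from $\h$ to $\DRH$, noting that $\kappa$ contains the non-explicit $(\omega-1)$-power) with Theorem~\ref{t:7} (which transfers decidability of the $\kappa$-word problem). The paper states this corollary without a proof body precisely because it is this immediate conjunction, and your write-up matches that reasoning.
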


Since, by Theorem \ref{t:5}, $\kappa$-tame pseudovarieties are
hyperdecidable (with respect to a certain class $\iC$), another
application comes from Proposition~\ref{5silvaJA} and Theorem \ref{15hyperdecidable}.
\begin{cor}
  Let $\h$ be a pseudovariety of groups that is $\kappa$-tame with
  respect to systems of pointlike equations and $\V$ an order
  computable pseudovariety. Then, both $\DRH * \V$ and
  $\DRH \vee \V$ are strongly decidable pseudovarieties.
  \qed
\end{cor}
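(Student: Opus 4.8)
The plan is to chain together the results already assembled in this section and in the Preliminaries, since the statement is a formal consequence of them. First I would observe that, by Corollary~\ref{c:13}, the hypothesis that $\h$ is $\kappa$-tame with respect to finite systems of pointlike equations guarantees that $\DRH$ is itself $\kappa$-tame with respect to the same class $\iC$ of systems. The signature $\kappa$ is highly computable, and $\DRH$ is recursively enumerable (this is part of the standing hypotheses; alternatively it follows from Theorem~\ref{t:7}, which also yields decidability of the $\kappa$-word problem for $\DRH$). Moreover, the class $\iC$ of all finite systems of pointlike equations $x_1 = \cdots = x_n$ is recursively enumerable and involves no parameters. Hence Theorem~\ref{t:5} applies and gives that $\DRH$ is $\iC$-decidable, which by definition means exactly that $\DRH$ is strongly decidable.

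With $\DRH$ strongly decidable and $\V$ order computable, the two conclusions follow by direct appeals to the cited results: Proposition~\ref{5silvaJA} (with $\DRH$ in the role of the strongly decidable factor and $\V$ in the role of the order computable factor) yields that $\DRH * \V$ is strongly decidable; and the ``strongly decidable'' clause of Theorem~\ref{15hyperdecidable}, applied with $\DRH$ and the order computable pseudovariety $\V$, yields that $\DRH \vee \V$ is strongly decidable.

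The only place where a little care is needed — and which I would make explicit — is the verification that the hypotheses of Theorem~\ref{t:5} hold in the present setting: that the class of pointlike equations is parameter-free and recursively enumerable, and that $\DRH$ is recursively enumerable. Both are immediate from the way pointlike systems are parametrised by $n$ and from the running assumptions on $\h$ (which transfer to $\DRH$). Apart from this bookkeeping, there is no substantial obstacle: the argument is a purely formal composition of Corollary~\ref{c:13}, Theorem~\ref{t:5}, Proposition~\ref{5silvaJA} and Theorem~\ref{15hyperdecidable}.
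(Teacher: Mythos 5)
Your argument is correct and follows exactly the route the paper indicates just before the corollary: Corollary~\ref{c:13} to get $\kappa$-tameness of $\DRH$ for pointlike systems, Theorem~\ref{t:5} to pass from tameness to strong decidability, and then Proposition~\ref{5silvaJA} and Theorem~\ref{15hyperdecidable} for the semidirect product and join, respectively. The extra bookkeeping you include (that the class of pointlike systems is recursively enumerable and parameter-free, that $\kappa$ is highly computable, and that the $\kappa$-word problem for $\DRH$ is decidable via Theorem~\ref{t:7}) is exactly what is needed to apply Theorem~\ref{t:5} and is consistent with the paper's standing assumptions.
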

\section{Graph equations}\label{sec:1004}
With the aim of proving tameness, we now let $\iC$ be the class
of all systems
of graph equations. Results on tameness of $\DRH$ also allow us to
know more about pseudovarieties of the form $\V * \DRH$ and $\DRH \vee
\V$ for certain pseudovarieties $\V$ (recall
Theorems~\ref{14hyperdecidable} and~\ref{15hyperdecidable}).
We prove
that, for an implicit signature $\sigma$ containing a non-explicit
operation,
if $\h$ is a $\sigma$-reducible pseudovariety of groups, then so is
$\DRH$.
To this end, we drew inspiration from~\cite{Rtame}.
Moreover, we assert the converse statement, which holds for every~$\sigma$.

Henceforth, we fix a finite graph $\Gamma = V  \uplus E$ and a solution 
$\delta: \pseudo {\Gamma}S \to (\pseudo AS)^I$ modulo~$\DRH$
of $\iS(\Gamma)$ such that, for every $x \in \Gamma$ the pseudoword
$\delta(x)$ belongs to the clopen subset $K_x$ of $(\pseudo AS)^I$.
We further denote by~$1$ the identity element of $\pseudo AH$.

Let $y$ be an edge of $\Gamma$, and let $x = \alpha(y)$ and $z = \omega(y)$.
If
$c(\delta(y)) \nsubseteq \cum{\delta(x)}$ then, by Lemma~\ref{c:5}, we
have unique factorizations $\delta(y) = u_yav_y$ and
$\delta(z) = u_zav_z$ such that $c(u_y) \subseteq
\cum{\delta(x)}$, $a \notin \cum{\delta(x)}$ and the pseudovariety
$\DRH$ satisfies both $\delta(x)u_y = u_z$ and $v_y =v_z$.
We refer to these factorizations as \emph{direct
  $\DRH$-splittings associated with the edge~$y$} and we say that $a$
is the corresponding \emph{marker}. We call \emph{direct
  $\DRH$-splitting 
  points}
the triples $(u_y, a,v_y)$ and $(u_z, a,v_z)$.

The first remark spells out the relationship between the notion of a
$\DRH$-splitting factorization defined above and the notion of a
splitting factorization in the context of~\cite{Rtame}
(in~\cite{Rtame}, a splitting factorization is defined as being an
$\R$-splitting factorization). It is a consequence of Lemma~\ref{c:5}
applied to the pseudovariety $\DRH$ and
to the pseudovariety $\R$.

\begin{rem}\label{sec:5}
  Let $y \in E$ be such that
  $c(\delta(y))
  \nsubseteq \cum{\delta(\alpha(y))}$. Consider factorizations $\delta(y) =
    u_y a v_y$ and $\delta(\omega(y)) = u_z a v_z$, with $c(u_y)
  \subseteq \cum{\delta(\alpha(y))}$ and $a \notin
  \cum{\delta(\alpha(y))}$, such that $\DRH$ satisfies
  $\delta(\alpha(y))u_y = u_z$, as
  above.
  Then, these factorizations
  are direct $\R$-splittings (note that $\delta$ is also a
  solution modulo $\R$ of $\iS(\Gamma)$ and so, it makes sense to
  refer to $\R$-splitting factorizations) if and only if they are direct
  $\DRH$-splittings.
\end{rem}

We also define the
\emph{indirect $\DRH$-splitting points} as follows.
Let $t \in \Gamma$ and suppose that we have a factorization $\delta(t)
= u_tav_t$, with $a \notin \cum{u_t}$. Then, one of the following
three situations may occur.
\begin{itemize}
\item If there is an edge $y \in E$ such that $\alpha(y) = t$ and
  $\omega(y) = z$, then there is also a factorization $\delta(z) =
  u_zav_z$ with $\DRH$ satisfying $u_t = u_z$ and $v_t\delta(y) =
  v_z$.
  In fact, this is a consequence of the pseudoidentity $\delta(t)
  \delta(y) = \delta(z)$ modulo $\DRH$, which holds for every edge
  $t \xrightarrow y z$ in $\Gamma$.
\item Similarly, if there is an edge $y \in E$ such that $\alpha(y) =
  x$ and $\omega(y) = t$ (and so, $\DRH$ satisfies $\delta(x)\delta(y)
  = \delta(t)$), then the factorization of $\delta(t)$ yields either a factorization
  $\delta(x) = u_xav_x$ such that $\DRH$ satisfies $u_x = u_t$ and
  $v_x\delta(y) = v_t$,
  or a factorization $\delta(y) = u_yav_y$
  such that $\DRH$ satisfies $\delta(x) u_y = u_t$ and $v_y =  v_t$.
\item On the other hand, if $t$ is itself an edge, say $\alpha(t) = x$
  and $\omega(t) = z$, and if $\delta(x)u_ta$ is an end-marked
  pseudoword, then the factorization of $\delta(t)$ determines a
  factorization $\delta(z) = u_zav_z$, such that $\DRH$ satisfies
  $\delta(x)u_z = u_t$ and $v_z = v_t$.
\end{itemize}
These considerations make clear the possible propagation of the
$\DRH$-direct 
splitting points. If the mentioned factorization of $\delta(t)$ comes
from a $\DRH$-(in)direct splitting factorization obtained through the
successive factorization of the values of edges and vertices under
$\delta$ in the way described above, then we say that each of the
triples $(u_x, a,x_x)$, $(u_y, a, v_y)$, and $(u_z, a, v_z)$ is an
indirect $\DRH$-splitting point
\emph{induced by the (in)direct
  $\DRH$-splitting point $(u_t, a, v_t)$}.
In Figure \ref{fig:3} we schematize a propagation of
splitting points arising from the direct $\DRH$-splitting point associated
with the edge $y_1$. We represent pseudowords by boxes, markers of splitting points by
dashed lines and factors with the same value modulo $\DRH$ with the
same filling pattern.
\begin{figure}[htpb]
  \centering
  \begin{tikzpicture}
    \footnotesize
    \draw [->] (0,0) arc (135:45:60pt);
    \node at (-22pt,-10pt) {$\delta(x_1) = $};
    \draw (-5pt,-15pt) rectangle (5pt,-5pt);
    \node at (18pt,25pt) {$\delta(y_1) = $};
    \draw (35pt, 21pt) rectangle (50pt,31pt);
    \draw[dashed,very thick] (40pt,33pt) -- (40pt,19pt);
    \node at (58pt,-10pt) {$\delta(x_2) = $};
    \draw (75pt,-15pt) rectangle (95pt,-5pt);
    \draw[dashed,very thick] (85pt,-17pt) -- (85pt,-3pt);
    \draw [->] (85pt,-20pt) arc (-135:-45:60pt);
    \node at (138pt,-10pt) {$\delta(x_3) = $};
    \draw (155pt,-15pt) rectangle (190pt,-5pt);
    \draw[dashed,very thick] (165pt,-17pt) -- (165pt,-3pt);
    \node at (118pt,-45pt) {$\delta(y_2) = $};
    \draw (135pt,-50pt) rectangle (150pt,-40pt);
    \draw [<-] (175pt,0pt) arc (135:45:60pt);
    \node at (248pt,-10pt) {$\delta(x_4) = $};
    \draw (265pt,-15pt) rectangle (275pt,-5pt);
    \node at (198pt,25pt) {$\delta(y_3) = $};
    \draw (215pt, 21pt) rectangle (240pt,31pt);
    \draw[dashed,very thick] (225pt,33pt) -- (225pt,19pt);
    \fill[pattern=north west lines, pattern color=gray] (40pt,21pt)
    rectangle (50pt,31pt);
    \fill[pattern=north west lines, pattern color=gray] (85pt,-15pt)
    rectangle (95pt,-5pt);
    \fill[pattern=crosshatch dots, pattern color=gray] (75pt,-15pt)
    rectangle (85pt,-5pt);
    \fill[pattern=crosshatch dots, pattern color=gray] (155pt,-15pt)
    rectangle (165pt,-5pt);
    \fill[pattern=crosshatch dots, pattern color=gray] (215pt,21pt) rectangle (225pt,31pt);
  \end{tikzpicture}
  \caption{Example of propagation of a direct splitting point.}
  \label{fig:3}
\end{figure}
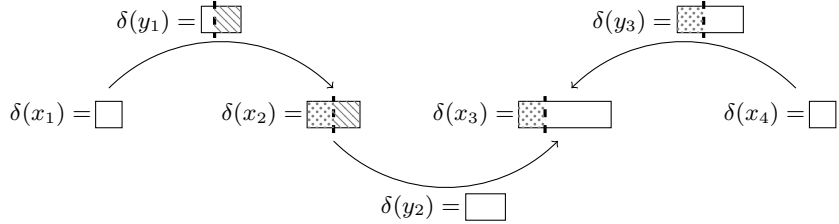

Yet again, we obtain a nice relationship between the indirect
$\DRH$-splitting points just defined and the indirect splitting
points introduced in \cite{Rtame} (which are the
indirect $\R$-splitting points). The reason is precisely the same as
in Remark \ref{sec:5}, together with the definition of indirect
splitting points.

\begin{rem}\label{sec:4}
  Let $t_0 \in \Gamma$ and $\delta(t_0) = u_0av_0$ be a direct
  $\R$-splitting factorization and consider $\{(u_i, a, v_i)\}_{i =
    1}^n \subseteq (\pseudo AS)^I \times A \times (\pseudo AS)^I$. Then,
  the following are equivalent:
  \begin{enumerate}
  \item $(u_i, a, v_i)$ is an indirect $\R$-splitting point induced
    by $(u_{i-1}, a, v_{i-1})$, for $i = 1, \ldots, n$;
  \item $(u_i, a, v_i)$ is an indirect $\DRH$-splitting point induced
    by $(u_{i-1}, a, v_{i-1})$, for $i = 1, \ldots, n$.
  \end{enumerate}
\end{rem}

The following lemma ensures that a direct $\R$-splitting point does
not propagate infinitely many times.
\begin{lem}
  [{\cite[Lemma 5.14]{Rtame}}] \label{sec:6}
  Given a solution $\delta$ over $\R$ of a system of graph equations,
  there is only a finite number of splitting points in the values of
  variables under $\delta$.
\end{lem}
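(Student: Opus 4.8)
The plan is to show that every $\R$-splitting point occurring in the values of the variables sits in one of finitely many ``propagation trees'' rooted at a direct $\R$-splitting point, and that each such tree is finite.

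\emph{Direct splitting points.} For each edge $y$ of $\Gamma$ one either has $c(\delta(y))\subseteq\cum{\delta(\alpha(y))}$, and then $y$ contributes nothing, or else Lemma~\ref{c:5} applied over $\R$ provides a \emph{unique} factorization $\delta(y)=u_yav_y$ with $c(u_y)\subseteq\cum{\delta(\alpha(y))}$ and $a\notin\cum{\delta(\alpha(y))}$, hence a unique direct $\R$-splitting factorization and a unique marker $a=a(y)$. Since $E$ is finite, there are at most $2|E|$ direct $\R$-splitting points, each carrying a well-defined marker.

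\emph{Reduction to infinite branches.} Next I would record two elementary facts, both immediate from the definition of an \emph{induced} $\R$-splitting point: (i) an induced $\R$-splitting point has the same marker as the one inducing it, so along any chain of ``induces'' the marker is constant; and (ii) a fixed $\R$-splitting point $(u_t,a,v_t)$ at a variable $t$ induces only finitely many $\R$-splitting points in one step, the possibilities being indexed by the finitely many edges of $\Gamma$ incident to $t$ together with the at most two alternatives of the second item and the single alternative of the third item of the definition. Declaring the direct $\R$-splitting points to be roots and joining each $\R$-splitting point to those it directly induces therefore exhibits the set of all $\R$-splitting points occurring in the values of the variables as the union of at most $2|E|$ finitely branching trees. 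By König's Lemma it now suffices to prove that none of these trees contains an infinite branch.

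\emph{Killing infinite branches.} Suppose $\big((u_i,a,v_i)\text{ at }t_i\big)_{i\ge0}$ were an infinite branch, necessarily with a common marker $a$. The idea is to attach to each of its members an end-marked pseudoword $p_ia$ over $\R$ that records the position of the marker inside the ``fully expanded'' context at that stage — concretely $p_i=u_i$ when $t_i$ is a vertex and $p_i=\delta(\alpha(t_i))\,u_i$ when $t_i$ is an edge, so that $a\notin\cum{p_i}$ and $p_ia$ is a node of the well-founded forest of Proposition~\ref{p:2} — and to deduce a contradiction by producing an infinite strictly $\le_{\Req}$-decreasing subsequence of $(p_ia)_i$. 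Inspecting the three induction cases, each step of the branch either preserves $p_ia$ modulo $\R$ or moves it strictly downwards in that forest (the downward moves occurring at the ``third item''-type steps, where some $\delta(\alpha(\cdot))$ is absorbed on the left of the prefix). The main obstacle — where the real work lies — is to rule out the degenerate possibility that the branch eventually performs only ``modulo-$\R$-preserving'' steps forever: one handles this by a finer bookkeeping of how the marker circulates through the finitely many variables of $\Gamma$, showing that between two consecutive downward moves only boundedly many steps can occur. This yields the required strictly decreasing subsequence and contradicts Proposition~\ref{p:2}; hence every branch is finite, and together with König's Lemma this proves that there is only a finite number of $\R$-splitting points in the values of the variables under $\delta$.
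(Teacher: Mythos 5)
Your overall architecture (finitely many direct splitting points, finite one-step branching, then rule out infinite propagation) is reasonable, but the step where you yourself locate ``the real work'' does not go through, and the reduction to it is also flawed. First, the ``induces'' relation is not acyclic: for a direct splitting associated with an edge $y\colon x \to z$, the point $(u_z,a,v_z)$ of $\delta(z)$ induces back $(u_y,a,v_y)$ of $\delta(y)$ (second item of the definition of indirect splitting points), which in turn induces $(u_z,a,v_z)$ again (third item). So the unfolded propagation tree genuinely has infinite branches even though the set of splitting points is finite; ``no infinite branch'' is false as a target, and K\"onig's Lemma cannot be applied as you propose. Second, and more fundamentally, with your own bookkeeping ($p_i=u_i$ at a vertex, $p_i=\delta(\alpha(t_i))u_i$ at an edge) there are \emph{no} strictly decreasing steps: in the third-item case the induced point satisfies $u_z =_{\R} \delta(\alpha(t))u_t$, so the new $p$ coincides with the old one modulo $\R$, and the same invariance holds in the other two cases. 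Every propagation step preserves $p_i a$ modulo $\R$, so the ``degenerate possibility'' you defer to a finer bookkeeping is the \emph{only} case, and Proposition~\ref{p:2} does no work in your argument.

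The missing idea is that this very invariance is what yields finiteness, with no well-founded descent needed: all splitting points in the propagation component of a fixed direct splitting point share the same associated end-marked pseudoword $pa$ modulo $\R$, and, by uniqueness of reduced factorizations, each variable $t \in \Gamma$ carries at most one splitting point whose associated end-marked pseudoword equals a given $pa$ modulo $\R$. Hence each component has at most $\card V + \card E$ elements and the total number of splitting points is bounded by $2\card E\,(\card V+\card E)$. (Note also that the present paper does not prove this lemma but quotes it from \cite{Rtame}; Proposition~\ref{p:2} is invoked only afterwards, to linearly order the finitely many splitting points of each $\delta(x)$, not to establish their finiteness.)
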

As an immediate consequence of Lemma \ref{sec:6} and of
the relationship between (in)direct $\R$-splitting points and
(in)direct $\DRH$-splitting points made explicit
in Remarks~\ref{sec:5} and~\ref{sec:4} we have the following:
\begin{cor}
  Given a solution $\delta$ over $\DRH$ of a system of graph equations,
  there is only a finite number of splitting points in the values of
  variables under $\delta$. \qed
\end{cor}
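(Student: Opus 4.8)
The statement is meant to follow at once from Lemma~\ref{sec:6} via the correspondences recorded in Remarks~\ref{sec:5} and~\ref{sec:4}, and this is the route I would take. First note that $\R\subseteq\DRH$: the only regular $\Req$-classes of an $\Req$-trivial semigroup are trivial groups, and the trivial group lies in every pseudovariety of groups~$\h$. Consequently any pseudoidentity valid modulo~$\DRH$ is also valid modulo~$\R$, so the fixed homomorphism~$\delta$, being a solution modulo~$\DRH$ of $\iS(\Gamma)$, is simultaneously a solution over~$\R$ of the same system; in particular it is legitimate to speak of the $\R$-splitting points occurring in the values $\delta(x)$, $x\in\Gamma$.

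Next I would check that the set of $\DRH$-splitting points of~$\delta$ and the set of its $\R$-splitting points are literally the same set of triples $(u,a,v)$. By Remark~\ref{sec:5}, a factorization of the prescribed shape associated with an edge~$y$ is a direct $\R$-splitting if and only if it is a direct $\DRH$-splitting, so the two notions agree on direct splitting points. For the induced (indirect) ones, I would argue by induction along the propagation process: starting from a direct splitting point, Remark~\ref{sec:4} says that a chain $(u_0,a,v_0),(u_1,a,v_1),\dots,(u_n,a,v_n)$ consists of successively induced indirect $\R$-splitting points exactly when it consists of successively induced indirect $\DRH$-splitting points. Since every indirect splitting point of~$\delta$ is reached from a direct one by finitely many such steps, the two collections coincide.

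Finally, Lemma~\ref{sec:6} applied to~$\delta$ regarded as a solution over~$\R$ of $\iS(\Gamma)$ bounds the number of its $\R$-splitting points, and by the previous paragraph this number equals the number of its $\DRH$-splitting points, which is therefore finite. I do not anticipate a genuine obstacle here; the only point requiring care is to be explicit that ``splitting points of~$\delta$'' means the union of the direct ones with all triples obtained from them by the propagation rules, and to observe that Remarks~\ref{sec:5} and~\ref{sec:4} together account for every step of that propagation, so that the $\DRH$- and $\R$-collections are equal on the nose rather than merely in bijection.
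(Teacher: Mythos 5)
Your argument is correct and is exactly the route the paper takes: the corollary is stated with an immediate $\qed$ because it follows from Lemma~\ref{sec:6} together with the identification of $\DRH$-splitting points with $\R$-splitting points furnished by Remarks~\ref{sec:5} and~\ref{sec:4}. Your write-up merely spells out (accurately) the small points the paper leaves implicit, namely that $\R\subseteq\DRH$ so $\delta$ is also a solution over $\R$, and that the two collections of splitting points coincide by induction along the propagation chains.
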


Taking into account Remarks \ref{sec:5} and \ref{sec:4}, from now on
we say
(in)direct splitting point (respectively, factorization) instead of
(in)direct $\DRH$-splitting point (respectively, factorization).

Let $\Gamma$ be a finite graph and consider the system of
equations $\iS(\Gamma)$.
For each
variable $x \in \Gamma$, let $\{(u_{x,i}, a_{x,i}, v_{x,i})\}_{i =
  1}^{m_x}$ be the (finite) set of splitting points of $\delta(x)$. By
definition, each pseudoword $u_{x,i}a_{x,i}$ is an end-marked prefix
of $\delta(x)$. By Proposition \ref{p:2}, we may assume,
without loss of generality, the following relations:
$$u_{x,1}a_{x,1} >_{\Req} u_{x,2}a_{x,2} >_{\Req} \cdots >_{\Req}
  u_{x, m_x}a_{x,m_x} >_{\Req}\delta(x).$$
Hence, by Lemma~\ref{c:5}, we have a reduced factorization
\begin{equation}
  \begin{split}
    \delta(x) &= \delta(x)_0
    \cdot \delta(x)_1
    \cdots \delta(x)_{m_x},
  \end{split}
\label{eq:17}
\end{equation}
such that $\delta(x)_0\cdots \delta(x)_{i-1} = u_{x,i}$, for $i = 1,
\ldots, m_x$,
induced by the splitting points of~$\delta(x)$. If $x
\in V$, then we write the reduced factorization in \eqref{eq:17} as
$\delta(x) = w_{x,1}\cdot w_{x,2} \cdots w_{x, n_x}$ and, if $y \in
E$, then we write that factorization as
$\delta(y) = w_{y,0} w_{y,1} \cdots w_{y,
  n_{y}}$.
Observe that, for $x \in V$, we have $n_{x} = m_x+1$, while for $y \in E$, we
have $n_{y} = m_{y}$.
Although this notation may not seem coherent, it is
justified by property \ref{item:10} of Lemma \ref{sec:23}.

\begin{lem}
  \label{sec:23}
  Let $xy = z$  be an equation of $\iS(\Gamma)$.
  Using the above notation, the following holds:
  \begin{enumerate}
  \item \label{item:8} $n_{x}+n_{y} = n_{z}$;
  \item \label{item:9}$\DRH$ satisfies $
    \begin{cases}
      w_{x, k} = w_{z,k}, \quad\text{ for }k = 1, \ldots, n_{x}-1;
      \\ w_{x, n_{x}} w_{y, 0} = w_{z,n_{x}};
      \\ w_{y, k} = w_{z, n_{x} + k}, \quad\text{ for }k = 1,
      \ldots, n_{y};
    \end{cases}$
  \item \label{item:10} $c(w_{y,0})\subseteq \cum{w_{x,
        n_{x}}}$;
  \item \label{item:11} each of the following products is reduced:
    \begin{align*}
      &w_{x, k} \cdot w_{x,
        k+1} \: (k = 1, \ldots, n_{x}-1);
      \\ & (w_{x, n_{x}} w_{y, 0})\cdot
           w_{y, 1};
      \\ & w_{z, k} \cdot w_{z, k+1} \: {(k = 1, \ldots,
      n_{z}-1)}.
    \end{align*}
  \end{enumerate}
\end{lem}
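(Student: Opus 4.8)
The plan is to prove all four statements of Lemma~\ref{sec:23} simultaneously by carefully unwinding the definitions of the reduced factorizations \eqref{eq:17} and of the (in)direct splitting points, making essential use of the fact that the splitting points of $\delta(x)$, $\delta(y)$, $\delta(z)$ are linked along the equation $xy=z$. The central observation is that the splitting points of $\delta(z)$ are exactly of two kinds: those that are end-marked prefixes of $\delta(x)$ — these are precisely the splitting points of $\delta(x)$, by the first bullet in the definition of indirect splitting points (propagation from $\alpha(y)=x$ to $\omega(y)=z$ along $y$) — together with those coming from the ``second half'', i.e.\ those of the form $u_z a_z$ with $\delta(x)u_{y,i}a_{y,i}=u_{z}a_z$ where $u_{y,i}a_{y,i}$ is a splitting point of $\delta(y)$ (here the third bullet, with $t=y$, is used, noting that $\delta(x)u_{y}a_{y}$ is end-marked because $a_y\notin\cum{\delta(x)}$, which in turn follows from $c(\delta(y))\nsubseteq\cum{\delta(x)}$ when $y$ actually has a direct splitting). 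The $\le_{\Req}$-ordering of Proposition~\ref{p:2}, applied to $\delta(z)$, then forces all prefixes of the first kind to precede all prefixes of the second kind, and this is what yields part~\ref{item:8}: $n_z = (m_x+1) + m_y = n_x + n_y$.

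Next I would establish part~\ref{item:10} and the middle line of part~\ref{item:9} together. By definition of direct $\DRH$-splitting associated with the edge $y$ (when $c(\delta(y))\nsubseteq\cum{\delta(x)}$), we have $\delta(y)=u_y a v_y$ with $c(u_y)\subseteq\cum{\delta(x)}$ and $\DRH$ satisfying $\delta(x)u_y = u_z$, $v_y = v_z$; translating into the block notation, $w_{y,0}=u_y$ is the initial factor of $\delta(y)$ lying before its first splitting point, so $w_{x,n_x}w_{y,0}$ is the $n_x$-th block of $\delta(z)$ (since $\delta(x)_0\cdots\delta(x)_{m_x}=\delta(x)$, the product $\delta(x)w_{y,0}$ coincides modulo $\DRH$ with $u_{z,n_x}\cdot(\text{the } n_x\text{-th block})$). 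The inclusion $c(w_{y,0})\subseteq\cum{\delta(x)}$ combined with $\cum{\delta(x)}=\cum{w_{x,n_x}}$ — the latter because $\delta(x)$ factors as $\delta(x)_0\cdots\delta(x)_{m_x}$ through its splitting prefixes, so the cumulative content is carried entirely by the last block — gives \ref{item:10}. When $c(\delta(y))\subseteq\cum{\delta(x)}$ there is no direct splitting of $y$, $w_{y,0}=\delta(y)$ restricted appropriately, and the same inclusion holds trivially. The remaining equalities of part~\ref{item:9} (the $w_{x,k}=w_{z,k}$ and $w_{y,k}=w_{z,n_x+k}$ lines) then follow from the matching of splitting prefixes of $\delta(z)$ with those of $\delta(x)$ and $\delta(y)$ described above, together with the uniqueness clause of Lemma~\ref{c:5} applied over $\DRH$: two reduced factorizations of $\Req$-equivalent pseudowords into blocks determined by the same chain of end-marked prefixes must agree block-by-block modulo $\DRH$.

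Finally, part~\ref{item:11} (reducedness of the displayed products) is essentially built into \eqref{eq:17}: the factorization induced by the splitting points is by definition a \emph{reduced} factorization in the sense defined before Proposition~\ref{p:2}, i.e.\ the first letter of each block does not belong to the cumulative content of the preceding product. For the blocks of $\delta(x)$ and of $\delta(z)$ this is immediate. For $(w_{x,n_x}w_{y,0})\cdot w_{y,1}$ one must check that the first letter of $w_{y,1}$ avoids $\cum{w_{x,n_x}w_{y,0}}$; but $\cum{w_{x,n_x}w_{y,0}}=\cum{w_{x,n_x}}$ by \ref{item:10} together with the behaviour of cumulative content under reduced products, and since $(w_{x,n_x}w_{y,0})\cdot w_{y,1}$ is, modulo $\DRH$, the $n_x$-th and $(n_x{+}1)$-st blocks of the reduced factorization of $\delta(z)$, reducedness is inherited from that of \eqref{eq:17} for $\delta(z)$.

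I expect the main obstacle to be the bookkeeping in the second paragraph: precisely matching the block decompositions across the equation $xy=z$, making sure that the end-marked prefix $\delta(x)u_y a$ really is end-marked (so that the third bullet of the indirect-splitting definition applies and no splitting prefixes are ``lost'' or ``created'' spuriously), and verifying that the chain of $>_{\Req}$-inequalities for $\delta(z)$ is exactly the concatenation of those for $\delta(x)$ and the translated ones for $\delta(y)$ with no collisions at the junction. Once that combinatorial alignment is pinned down, everything else reduces to uniqueness of left basic / first-occurrences factorizations (Lemma~\ref{c:5}) and the elementary behaviour of $\cum{\cdot}$ under reduced products.
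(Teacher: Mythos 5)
Your proposal follows essentially the same route as the paper's own proof: both split into the cases $c(\delta(y)) \nsubseteq \cum{\delta(\alpha(y))}$ and $c(\delta(y)) \subseteq \cum{\delta(\alpha(y))}$, match the splitting points of $\delta(z)$ bijectively with those of $\delta(x)$ together with those of $\delta(y)$ via the propagation of (in)direct splittings along the edge $y$, and then read off parts (a)--(d) from this alignment combined with the well-founded $\Req$-order of Proposition~\ref{p:2} and the uniqueness in Lemma~\ref{c:5}. One small slip worth fixing: the equality $\cum{w_{x,n_x}w_{y,0}} = \cum{w_{x,n_x}}$ holds precisely because $c(w_{y,0}) \subseteq \cum{w_{x,n_x}}$ makes the concatenation $w_{x,n_x}\cdot w_{y,0}$ \emph{not} reduced, so the justification should invoke the behaviour of cumulative content under such non-reduced appends, not under reduced products.
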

\begin{proof}
  As we already observed, the number of splitting
  points of $\delta(z)$ is $m_{z} = n_{z} - 1$.
  We distinguish between two situations.
  \begin{itemize}
  \item If $c(\delta(y)) \nsubseteq \cum{\delta(x)}$, then
    there are two direct splitting factorizations given by
    $\delta(y) = u_{y}av_{y}$ and $\delta(z) =
    u_{z}av_{z}$.
    So, by definition, the inclusion $c(u_{y}) \subseteq
    \cum{\delta(x)}$ holds.
    We notice that any other splitting point of $\delta(y)$, say
    $(u_{y}', b, v_y')$, is necessarily induced by a splitting point of
    $\delta(z)$, say $(u_z', b, v_z')$.
    Moreover, since the product $(\delta(x)u_y') \cdot b v_y'$ is
    reduced (because so is~$u_z' \cdot( b v_z')$ and~$\DRH$
      satisfies $\delta(x)u_y' = u_z'$), the
    pseudoword $u_y$ is a prefix of $u_y'$.
    On the other hand, the set of all splitting points of
    $\delta(z)$ induces a factorization of the pseudoword
    $\delta(x)\delta(y)$, say
    \begin{equation}
        \delta(x)\delta(y) = w_1'\cdot w_2'\cdots w_{n_{z}}'.\label{eq:37}
    \end{equation}
    Of course, for each $k = 1, \ldots, n_{x}-1$, the prefix
    $w_1'\cdots w_k'$ of $\delta(x)\delta(y)$
    corresponds to  the first 
    component of one of the splitting points of $\delta(x)$ (which is either induced
    by one of the splitting points of $\delta(z)$ or it induces a
    splitting point of $\delta(z)$).
    More specifically, the pseudoidentity
    $w_{z,k} = w_k' = w_{x,k}$
    is valid in $\DRH$.
    From the observation above, we also know that the first components
    of the indirect splitting points of $\delta(y)$ have $u_y$ as a
    prefix. Therefore, we have  $u_y = w_{y,0}$, the factor
    $w_{n_{x}}' = w_{z, n_{x}}$
    coincides with $w_{x,n_{x}}w_{y,0}$ modulo $\DRH$, and
    $c(w_{y,0}) = c(u_y) \subseteq
    \cum{\delta(x)}=\cum{w_{x,n_{x}}}$. It also follows that
    $w_{n_{x}+k}' = w_{z, n_{x}+k} = w_{y, k}$ modulo $\DRH$,
    for $k = 1, \ldots, n_{y}$. We just proved \ref{item:9},
    \ref{item:10} and \ref{item:11}. Finally, part \ref{item:8} results
    from counting the involved factors in both sides of \eqref{eq:37}.
  \item If $c(\delta(y)) \subseteq \cum{\delta(x)}$, then
    $\delta(y)$ has no direct splitting points. As $y$ is an edge,
    an indirect splitting point of $\delta(y)$ must be induced by
    some splitting point of $\delta(z)$. Suppose that $(u_z, a,
    v_z)$ is a splitting point of $\delta(z)$ that induces a
    splitting point in $\delta(y)$, say $(u_y, a, v_y)$. Then, we
    would have a reduced product $(\delta(x)u_y) \cdot (a v_y)$, which
    contradicts the assumption $c(\delta(y)) \subseteq
    \cum{\delta(x)}$. Therefore, the pseudoword $\delta(y)$ has no
    splitting points at all. With the same kind of argument as the one
    above, we may derive the claims \ref{item:8}--\ref{item:11}. \popQED
  \end{itemize}
\end{proof}

Now, write $\iS(\Gamma) =  \{x_iy_i = z_i\}_{i =
  1}^N$. Note that $y_j \notin \{x_i, z_i\}$ for all $i,j$.
We let $\iS_1$ be the system of equations containing, for each $i
= 1, \ldots, N$, the following set of equations:
\begin{equation}
  \begin{split}
    (x_i)_k & = (z_i)_k, \text{ for } k = 1, \ldots, n_{x_i}-1;
    \\ (x_i)_{n_{x_i}} y_{i,0} & = (z_i)_{n_{x_i}};
    \\ y_{i,k}  &= (z_i)_{n_{x_i} + k}, \text{ for } k = 1, \ldots, n_{z_i}.
  \end{split}\label{eq:30}
\end{equation}
In the system $\iS_1$, we are assuming that $(x_i)_k$
and $(x_j)_k$ (respectively, and $(z_j)_k$)  represent the same
  variable whenever so do $x_i$  and $x_j$ (respectively, and $z_j$).
By Lemma~\ref{sec:23}, it is clear that each solution modulo $\DRH$ of
$\iS_1$ yields a solution modulo $\DRH$ of $\iS(\Gamma)$ and
conversely. We next prove that, for a $\sigma$-reducible
pseudovariety of groups $\h$, if $\iS_1$ has a
solution modulo $\DRH$, then it has a solution modulo $\DRH$ given by
$\sigma$-words, thus concluding that the same happens
with~$\iS(\Gamma)$. Before that, we establish the following. 
\begin{prop}\label{sec:10}
  Let $\sigma$ be an implicit signature that contains a non-explicit
  operation.
  Let $\h$ be a $\sigma$-reducible pseudovariety of groups and
  $\Gamma = V \uplus E$ be a finite graph. Suppose that 
  there exists a solution $\delta: \pseudo {\Gamma}S \to (\pseudo
  AS)^I$ modulo $\DRH$ of $\iS(\Gamma)$ such that:
  \begin{enumerate}
  \item $\cum{\delta(x)} \neq \emptyset$, for every vertex $x \in V$
  \item\label{item:1} $c(\delta(y)) \subseteq \cum{\delta(\alpha(y))}$, for
    every edge $y \in E$.
  \end{enumerate}
  Then, $\iS(\Gamma)$ has a solution modulo $\DRH$ in $\sigma$-words, say
  $\varepsilon$, such that $\varphi(\varepsilon(x)) =
  \varphi(\delta(x))$, for all $x \in\Gamma$.
\end{prop}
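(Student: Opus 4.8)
The plan is to reduce the system $\iS(\Gamma)$ to an auxiliary system over $\h$ by ``collapsing'' each value $\delta(x)$ to its regular $\Heq$-component, exactly in the spirit of Lemma~\ref{sec:16}. Under hypotheses (a) and (b), every variable $x \in \Gamma$ satisfies $c(\delta(x)) \subseteq \cum{\delta(\alpha(\,\cdot\,))}$ in the appropriate sense; in particular, for each vertex $x \in V$ we have $\cum{\delta(x)} \neq \emptyset$, so, choosing $m_x$ with $c(\lbf[m_x]{\delta(x)}) = \cum{\delta(x)}$, we may factor $\delta(x) = p_x q_x$ where $p_x = \lbf[1]{\delta(x)} \cdots \lbf[m_x - 1]{\delta(x)}$ and $q_x$ is the unique pseudoword with $c(q_x) = \cum{q_x} = \cum{\delta(x)}$. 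Because the equations $\delta(x)\delta(y) =_\DRH \delta(z)$ hold and $c(\delta(y)) \subseteq \cum{\delta(x)}$, Lemma~\ref{sec:23} (with $n_x = n_z$, i.e.\ no direct splitting points) forces $\delta(x) \Req \delta(z)$ modulo $\DRH$, so all three of $\delta(x), \delta(y), \delta(z)$ share a common prefix part $p$, and their ``tails'' sit in a single regular $\Heq$-class with a common idempotent $e$. First I would make this precise: fix, for each $\Req$-class appearing among the $\delta(x)$, the idempotent $e$ designated by the corresponding sequence~\eqref{eq:10000}, and let $\psi_e : \pseudo{c(e)}H \to H_e$ be the homeomorphism of Proposition~\ref{p:3}.

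Next I would transport the system to $\h$. Applying $\rho_\h$ to each $\delta(x)$ (equivalently, applying $\psi_e^{-1}$ to the $\Heq$-component of $\delta(x)$) yields a map $\Gamma \to \pseudo{c(e)}H$ that, by Proposition~\ref{p:3} and the fact that $\rho_\h$ is a homomorphism, is a solution modulo $\h$ of $\iS(\Gamma)$ — read now as a system of \emph{graph equations over the group alphabet}. Here is where the $\sigma$-reducibility of $\h$ with respect to systems of graph equations enters: there is a solution $\theta$ modulo $\h$ of this transported system in $\sigma$-words, and, by adjusting constraints as in Remark~\ref{sec:50}, we may require $\varphi(\theta(x))$ to match $\varphi$ of the original $\h$-value. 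Then I would push $\theta$ back up: define $\varepsilon(x) = p \cdot \psi_e(\theta(x))$ for vertices (using the common prefix $p$ attached to the appropriate $e$), and for each edge $y$ with $\alpha(y) = x$, $\omega(y) = z$, set $\varepsilon(y)$ to be the unique pseudoword ``difference'' making $\varepsilon(x)\varepsilon(y) = \varepsilon(z)$ hold formally — concretely $\varepsilon(y) = \psi_e(\theta(x)^{-1}\theta(z))$ inside the group $H_e$, which is a $\sigma$-word because $\psi_e$ is built from $\omega$-powers and because $\theta$ takes $\sigma$-word values and the group operations (including the inverse, available in $\langle\sigma\rangle$ since $\sigma$ contains a non-explicit operation — one takes $\eta$ so that $\varphi(\eta(s)) = s^{-1}$ in each group) preserve this. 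One checks $\varepsilon(x)\varepsilon(y) =_\DRH \varepsilon(z)$ using Lemma~\ref{sec:16}: the $\Req$-parts agree by construction (same prefix $p$, and the tails all have cumulative content $= \cum{\delta(\cdot)}$), while the $\h$-parts agree because $\theta$ solves the transported system over $\h$.

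The main obstacle I anticipate is bookkeeping the prefixes $p$ and idempotents $e$ coherently across the whole graph: a priori different edges/vertices in the same connected component must be shown to ``see'' the same $p$ and $e$, and one must verify that the prefix part is genuinely common (this is where the hypothesis $c(\delta(y)) \subseteq \cum{\delta(\alpha(y))}$ for \emph{every} edge is essential — it prevents any splitting and guarantees that reading an edge does not advance past the idempotent). A second delicate point is checking that the constraints $\varepsilon(x) \in K_x$ are met: one only controls $\varphi(\varepsilon(x))$, and must verify $\varphi(\varepsilon(x)) = \varphi(\delta(x))$, which follows since $\varphi(p)$ is fixed, $\varphi$ respects the idempotent/$\Heq$-structure (as $S$ has a content function and $\varphi(\psi_e(\cdot))$ is determined by $\varphi(e)$ together with the $\varphi$-image of the argument), and $\theta$ was chosen to preserve $\varphi$-values. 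The verification that $\varepsilon(y)$ is a $\sigma$-word, while conceptually routine, requires noting that $\langle\sigma\rangle$ is closed under the relevant group operations; I would isolate this as a small lemma or an explicit remark rather than belabour it in the main argument.
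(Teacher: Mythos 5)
Your high-level plan --- peel off a prefix so that each $\delta(x)$ lands in a regular $\Heq$-class, transport the equations to $\h$, solve there by $\sigma$-reducibility, then lift back --- is in the same spirit as the paper's, but the lift-back you describe contains genuine gaps that the paper avoids entirely by a different mechanism.

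\textbf{The prefix is never made a $\sigma$-word.} You set $\varepsilon(x) = p\cdot\psi_e(\theta(x))$, but $p$ (the common prefix $u_x$) is an arbitrary pseudoword; nothing in your sketch converts it into a $\sigma$-word. The paper handles exactly this by introducing fresh variables $\widehat V$ and a \emph{pointlike system over $\DRH$}, $\iS_0=\{\widehat x=\widehat z: x,z\in V\}$, assigning $\widehat x\mapsto u_x$, and then invoking the already-established Theorem~\ref{pointlike} together with Remark~\ref{sec:17} to replace the $u_x$ by $\sigma$-words that still agree modulo~$\DRH$, keep the $\varphi$-values, and keep the cumulative content. You do not invoke Theorem~\ref{pointlike} at any point, and without it there is no visible route to a $\sigma$-word prefix.

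\textbf{Routing through $\psi_e$ creates two further unjustified steps.} You propose $\varepsilon(y)=\psi_e(\theta(x)^{-1}\theta(z))$ inside $H_e$. For this to produce a $\sigma$-word one needs (i) the idempotent $e$ itself to be a $\sigma$-word and (ii) $\psi_e$ to carry $\sigma$-words of $\pseudo{c(e)}{H}$ to $\sigma$-words of $\pseudo AS$. Neither is established, and (i) in particular is not automatic --- the $e$ you pick is an accumulation-point idempotent, not a priori of the form $w^\omega$ with $w$ a $\sigma$-word. Also, your claim that a non-explicit $\eta\in\langle\sigma\rangle$ can be chosen with $\varphi(\eta(s))=s^{-1}$ on groups is not true in general (take $\sigma=\{\_\cdot\_,\,x^{\omega+1}\}$: $x^{\omega+1}$ is non-explicit but acts as the identity on groups). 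None of this is needed: since $\h$ is $\sigma$-reducible for graph systems, and $\delta_0$ (your $\delta$ composed with the prefix/tail factorization of the vertices) is a solution modulo $\h$ of $\iS(\Gamma)$ with values in $(\pseudo AS)^I$, one gets directly a $\sigma$-word solution $\varepsilon'$ modulo $\h$ of $\iS(\Gamma)$ that already lives in $(\pseudo AS)^I$ and preserves $\varphi$-values; the paper then simply puts $\varepsilon(x)=\varepsilon_0(\widehat x)\varepsilon'(x)$ for vertices and $\varepsilon(y)=\varepsilon'(y)$ for edges, and checks the $\DRH$-equations via Lemma~\ref{sec:16} (using the content function of $S$ and the preserved cumulative contents to verify the $\Req$-part). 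There is no need for inverses, for $\psi_e$, or for working inside $\pseudo{c(e)}{H}$.

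In short: the decomposition idea is right, but the key technical moves --- converting the prefix to $\sigma$-words via Theorem~\ref{pointlike}/Remark~\ref{sec:17}, and sidestepping $\psi_e$ by keeping the $\h$-solution inside $(\pseudo AS)^I$ and gluing with Lemma~\ref{sec:16} --- are missing, and the substitutes you propose ($\psi_e$, group inverses in $\langle\sigma\rangle$) do not work without additional unproven hypotheses.
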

\begin{proof}
  Without loss of generality, we may assume that $\Gamma$ has only one
  connected component (when disregarding the directions of the
  arrows). Otherwise, we may treat each component separately.
  Because of the hypothesis \ref{item:1}, the pseudowords
  $\delta(\alpha(y))$ and 
  $\delta(\omega(y))$ are $\Req$-equivalent modulo $\DRH$ for every
  edge $y \in E$.
  Since we are assuming that all vertices of $\Gamma$ are in the
  same connected component, it follows that for all $x,z \in V$,
  the pseudowords $\delta(x)$ and $\delta(z)$ are $\Req$-equivalent
  modulo $\DRH$.
  Fix a variable $x_0 \in V$ and let
  $u_0$ be an accumulation
  point of $(\lbf[1]{\delta(x_0)} \cdots \lbf[m]{\delta(x_0)})_{m \ge 1}$ in~$\pseudo AS$. Since,
  in $\DRH$, the pseudowords $u_0$ and $\delta(x_0)$ are
  $\Req$-equivalent, for each $x \in V$ there is a factorization
  $\delta(x) = u_x v_x$ (with $v_x$ possibly empty) such that $c(v_x)\subseteq \cum{u_x}$ and $u_x
  =_\DRH u_0$.
  
  Consider the set $\widehat V = \{\widehat x \colon x \in V\}$ with $\card V$
  distinct variables, disjoint from  $\Gamma$, the system of
  equations $\iS_0 = \{\widehat
  x = \widehat z \colon x, z \in V\}$ with variables in $\widehat V$, and let
  \begin{align*}
    \delta_0: \pseudo {\widehat V \uplus \Gamma} S &\to (\pseudo AS)^I
    \\ \widehat x & \mapsto u_x,  \quad\text{ if } \widehat x \in \widehat V;
    \\ x & \mapsto v_x, \quad\text{ if } x \in V;
    \\ y & \mapsto \delta(y), \quad\text{ otherwise}.
  \end{align*}
  By construction, the homomorphism $\delta_0$ is a solution modulo
  $\DRH$ of $\iS_0$ which is also a solution modulo $\h$ of
  $\iS(\Gamma)$.
  Hence, on the one hand, Theorem~\ref{pointlike}
  together with Remark~\ref{sec:17} yield a solution
  $\varepsilon_0:\pseudo{\widehat V} S \to \pseudo AS$ modulo $\DRH$
  in $\sigma$-words of $\iS_0$ such that
  \begin{align*}
    \varphi(\varepsilon_0(\widehat x)) & = \varphi(\delta_0(\widehat x))
                                         = \varphi(u_x),
    \\ \cum{\varepsilon_0(\widehat x)} & = \cum{\delta_0(\widehat x)},
  \end{align*}
  for every $\widehat x \in \widehat V$. On the other hand, the fact
  that $\h$ is $\sigma$-reducible implies that there is a solution
  $\varepsilon': \pseudo\Gamma S \to (\pseudo AS)^I$ modulo $\h$ of
  $\iS(\Gamma)$ given by $\sigma$-words satisfying
  $$\varphi(\varepsilon'(x)) =
  \varphi(\delta_0(x)) = \varphi(v_x),$$ for every $x \in \Gamma$.
  Thus, we take $\varepsilon: \pseudo {\Gamma}S \to (\pseudo AS)^I$
  to be
  the continuous homomorphism defined by $\varepsilon(x) =
  \varepsilon_0 (\widehat x)\varepsilon'(x)$ if $x \in V$, and
  $\varepsilon(y) = \varepsilon'(y)$ otherwise. Taking into account that $S$ has a
  content function, we may use Lemma~\ref{sec:16} to deduce that
  $\varepsilon$ is a solution modulo $\DRH$ of $\iS(\Gamma)$ in
  $\sigma$-words. It is easy to check that the constraints for the
  variables of $\Gamma$ are also satisfied. Therefore, $\varepsilon$
  is the required homomorphism.
\end{proof}
\begin{lem}
  Let $\iS_1$ be the system of equations~\eqref{eq:30} in the set of
  variables $X_1$ and let $\delta_1: \pseudo
  {X_1}S \to (\pseudo AS)^I$ be its solution modulo $\DRH$.
  Suppose that the implicit signature $\sigma$ contains a non-explicit operation.
  If the pseudovariety~$\h$ is
  $\sigma$-reducible, then $\iS_1$ has
  a solution modulo $\DRH$ in $\sigma$-words.
\end{lem}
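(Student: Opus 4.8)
The plan is to reduce the problem about $\iS_1$ to Proposition~\ref{sec:10} by separating, inside the solution $\delta_1$, the part of each variable's value whose content already stabilizes (the ``cumulative'' part) from the genuinely $\Req$-decreasing part. The key observation is that $\iS_1$ is, by construction, a system of graph equations in disguise: grouping the variables $(x_i)_k$, $y_{i,k}$, $(z_i)_k$ appropriately, the equations~\eqref{eq:30} are exactly the edge equations of some finite graph $\Gamma_1$, and moreover the factorization~\eqref{eq:17} that produced $\iS_1$ was built so that each factor $w_{x,k}$ (equivalently each $\delta_1$-value of a variable of $\iS_1$) has no splitting points, i.e. satisfies $c(\delta_1(\cdot)) = \cum{\delta_1(\cdot)}$ except possibly at the ``initial'' factors which decrease $\Req$ by design. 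So the real content of the lemma is that once all the splitting points have been extracted, what remains is a system to which Proposition~\ref{sec:10} applies.

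**First I would** set up $\Gamma_1 = V_1 \uplus E_1$ so that $\iS(\Gamma_1) = \iS_1$: the edge variables are the $y_{i,0}$'s together with fresh edge variables realizing the products $(x_i)_{n_{x_i}} y_{i,0} = (z_i)_{n_{x_i}}$ and $y_{i,k} = (z_i)_{n_{x_i}+k}$, the vertex variables being the remaining $(x_i)_k$, $(z_i)_k$. Then I would verify that $\delta_1$, as a solution of $\iS(\Gamma_1)$, satisfies the two hypotheses of Proposition~\ref{sec:10}: hypothesis~\ref{item:1}, that $c(\delta_1(y)) \subseteq \cum{\delta_1(\alpha(y))}$ for every edge $y$, is precisely property~\ref{item:10} of Lemma~\ref{sec:23} (which says $c(w_{y,0}) \subseteq \cum{w_{x,n_x}}$), transported through the identification; and the condition $\cum{\delta_1(x)} \neq \emptyset$ for every vertex $x$ follows because each vertex factor $w_{x,k}$ sits in a reduced factorization~\eqref{eq:17} with a proper $\Req$-extension to its right, forcing $c(w_{x,k}) = \cum{w_{x,k}}$ — and this is nonempty unless the corresponding factor is empty, a degenerate case one handles by deleting those variables (their value is $I$, already a $\sigma$-word).

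**The main obstacle will be** making the bookkeeping of the reduction to $\Gamma_1$ precise: one must check that the connected-component hypothesis of Proposition~\ref{sec:10} can be arranged (treating components of $\Gamma_1$ separately, exactly as in that proof), that no variable of $\iS_1$ plays incompatible roles (vertex in one equation, edge in another) — this is guaranteed by the remark just before Lemma~\ref{sec:23} that $y_j \notin \{x_i, z_i\}$, together with the convention on shared variables stated after~\eqref{eq:30} — and that the constraints pull back correctly. Once $\Gamma_1$ is in place, Proposition~\ref{sec:10} delivers a solution $\varepsilon_1$ of $\iS(\Gamma_1) = \iS_1$ modulo $\DRH$ in $\sigma$-words with $\varphi(\varepsilon_1(x)) = \varphi(\delta_1(x))$ for all $x \in X_1$, which is exactly what is claimed. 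The one subtlety worth a sentence in the write-up is the edge variables of the form $(x_i)_{n_{x_i}}$ and the formal product $(x_i)_{n_{x_i}} y_{i,0}$: here $(x_i)_{n_{x_i}}$ is a vertex of $\Gamma_1$ and $y_{i,0}$ its outgoing edge, so the equation $(x_i)_{n_{x_i}} y_{i,0} = (z_i)_{n_{x_i}}$ is genuinely of graph type, and property~\ref{item:10} is exactly the hypothesis needed for the incidence $\alpha(y_{i,0}) = (x_i)_{n_{x_i}}$.
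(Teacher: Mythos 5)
There is a genuine gap in your argument, precisely at the step where you verify the hypotheses of Proposition~\ref{sec:10}. You claim that for every vertex variable $x$ of the graph $\Gamma_1$ the value $\delta_1(x)$ has nonempty cumulative content, justifying this by saying that each factor $w_{x,k}$ ``sits in a reduced factorization~\eqref{eq:17} with a proper $\Req$-extension to its right, forcing $c(w_{x,k}) = \cum{w_{x,k}}$.'' That implication is false: a reduced product $w \cdot w'$ only requires that the first letter of $w'$ lie outside $\cum{w}$, which is vacuous when $\cum{w} = \emptyset$. For instance, with $w_{x,k} = ab$ and the next factor starting with $c$, the product is reduced, yet $\cum{ab} = \emptyset \neq \{a,b\} = c(ab)$. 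So the intermediate factors $w_{x,1}, \ldots, w_{x,n_x - 1}$ (the values of the first-row variables $(x_i)_k$, $(z_i)_k$ with $k < n_{x_i}$) can be nonempty pseudowords with empty cumulative content, and you cannot delete them as degenerate variables. Proposition~\ref{sec:10} simply does not apply to the whole of $\iS_1$ viewed as a single graph system.

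The paper avoids this by splitting $\iS_1$ into two variable-disjoint subsystems treated by different tools: a pointlike system $\iS_2$, consisting of the first-row equations together with those equations of the other two rows whose variables under $\delta_1$ have empty cumulative content, and a graph system $\iS_3$ consisting of the remaining second-row equations $(x_i)_{n_{x_i}} y_{i,0} = (z_i)_{n_{x_i}}$. For those remaining equations the vertex values do satisfy $\cum{} \neq \emptyset$ (if $\cum{w_{x,n_x}} = \emptyset$ then $c(w_{y,0}) \subseteq \cum{w_{x,n_x}} = \emptyset$, making the equation pointlike and hence placed in $\iS_2$), and hypothesis~\ref{item:1} of Proposition~\ref{sec:10} holds by Lemma~\ref{sec:23}\ref{item:10}. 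Then Theorem~\ref{pointlike} handles $\iS_2$ — crucially, it carries no cumulative-content hypothesis — and Proposition~\ref{sec:10} handles $\iS_3$. Your reduction works only for the part that the paper calls $\iS_3$; the first-row equations and the degenerate second- and third-row equations require the pointlike theorem, not the graph proposition.
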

\begin{proof}
  Analyzing the equations in \eqref{eq:30},
  we easily conclude that there are no variables
  occurring simultaneously in the first row and in one of the other two
  rows. Therefore, the system $\iS_1$ can be thought as a system of
  pointlike equations $\iS_2$ together with a system of graph
  equations $\iS_3$ such that the condition \ref{item:1} of
  Proposition~\ref{sec:10} holds and none of the variables occurring
  in $\iS_2$ occurs
  in $\iS_3$. Note that we are also including in $\iS_2$ the equations
  in the last two rows of \eqref{eq:30} such that the cumulative
  content of the value of the involved variables under $\delta_1$ is
  empty.
  
  By Theorem~\ref{pointlike} the system $\iS_2$ has a solution modulo $\DRH$ in
  $\sigma$-words, while by Proposition~\ref{sec:10} the system $\iS_3$ has a
  solution modulo $\DRH$ in $\sigma$-words. Therefore, the intended
  solution for $\iS_1$ also exists.
\end{proof}
We just proved the announced result.
\begin{thm}\label{DRH_graph_eq}
  When $\sigma$ is an implicit signature containing a non-explicit
  signature, the pseudovariety $\DRH$ is $\sigma$-reducible if so is
  $\h$.
  \qed
\end{thm}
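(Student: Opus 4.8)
The plan is to prove Theorem~\ref{DRH_graph_eq} by reducing the problem about $\iS(\Gamma)$ to the combined system $\iS_1$ of~\eqref{eq:30}, and then applying the preceding lemma. So I would start from a solution $\delta\colon \pseudo{\Gamma}S \to (\pseudo AS)^I$ modulo $\DRH$ of $\iS(\Gamma)$ satisfying prescribed clopen constraints, which without loss of generality are encoded by a pair $(\varphi,\nu)$ with $\varphi$ landing in a finite semigroup $S$ equipped with a content function (Remark~\ref{sec:50}). The first step is to invoke the finiteness of the set of splitting points (the Corollary following Lemma~\ref{sec:6}) so that, for each variable $x \in \Gamma$, the reduced factorization~\eqref{eq:17} induced by the splitting points of $\delta(x)$ is well defined; this yields the factors $w_{x,k}$ (for vertices) and $w_{y,k}$ (for edges). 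Lemma~\ref{sec:23} then tells us precisely how these factors interact along each equation $xy=z$, and in particular guarantees that the map sending each new variable of $X_1$ to the corresponding factor of $\delta$ is a solution $\delta_1$ modulo $\DRH$ of $\iS_1$, with constraints naturally inherited from $(\varphi,\nu)$ by applying $\varphi$ to each factor.

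Next I would apply the immediately preceding lemma: since $\sigma$ contains a non-explicit operation and $\h$ is $\sigma$-reducible, $\iS_1$ admits a solution $\varepsilon_1$ modulo $\DRH$ in $\sigma$-words preserving the images under $\varphi$. The key observation that makes this legitimate is the one already recorded before that lemma: the equations of $\iS_1$ split into a block of pure pointlike equations (the first row of~\eqref{eq:30}, together with those equations from the last two rows whose variables have empty cumulative content under $\delta_1$) and a block that forms a system of graph equations in disjoint variables satisfying hypothesis~\ref{item:1} of Proposition~\ref{sec:10}; Theorem~\ref{pointlike} handles the former and Proposition~\ref{sec:10} the latter. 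Finally, from $\varepsilon_1$ I would reconstruct a continuous homomorphism $\varepsilon\colon \pseudo{\Gamma}S \to (\pseudo AS)^I$ by setting $\varepsilon(x) = \varepsilon_1((x)_1)\cdots \varepsilon_1((x)_{n_x})$ for $x \in V$ and $\varepsilon(y) = \varepsilon_1(y_{0})\cdots\varepsilon_1(y_{n_y})$ for $y \in E$, which lands in $\sigma$-words because $\sigma$-words are closed under multiplication. One then checks, using Lemma~\ref{sec:23}\ref{item:9} together with Lemma~\ref{sec:16} (the fact that over $\DRH$ an equality of reduced products follows from the equalities of corresponding factors, since $S$ has a content function), that $\varepsilon$ is a solution modulo $\DRH$ of $\iS(\Gamma)$; the constraints are preserved because $\varphi(\varepsilon(x))$ is a product of the $\varphi$-images of the factors, which equals $\varphi(\delta(x))$ by construction of $\delta_1$ and the fact that $\varepsilon_1$ preserves $\varphi$-values.

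I expect the main obstacle — already essentially dispatched by the machinery in the excerpt — to be the verification that reassembling the $\sigma$-word factors $\varepsilon_1((x)_k)$ genuinely yields a solution modulo $\DRH$, rather than merely modulo $\R$. The subtlety is that equality modulo $\DRH$ is not determined by the left basic factorizations alone: by Lemma~\ref{sec:16} one also needs the group-theoretic coordinate (the image under $\psi_e^{-1}=\rho_\h$) to match. This is exactly why $\iS_1$ must be solved modulo $\DRH$ and not just factor-wise modulo $\R$, and why the internal lemma routes the work through Theorem~\ref{pointlike} and Proposition~\ref{sec:10} — both of which produce $\DRH$-solutions (the latter even preserving cumulative contents, via Remark~\ref{sec:17}), so that Lemma~\ref{sec:16} applies cleanly at the reassembly stage. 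Thus the proof is essentially a bookkeeping argument: set up the factorizations, cite Lemma~\ref{sec:23} and the finiteness of splitting points, apply the internal lemma, and reassemble using Lemma~\ref{sec:16}. Combined with the converse implication (valid for every $\sigma$, argued via Remarks~\ref{sec:5} and~\ref{sec:4} by transferring the $\R$-theory, or else directly), this completes the characterization.
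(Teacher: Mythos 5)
Your proposal follows the paper's route exactly: introduce the finitely many splitting points, form the reduced factorizations and the associated system $\iS_1$ of~\eqref{eq:30}, decompose $\iS_1$ into a pointlike block (handled by Theorem~\ref{pointlike}) and a graph block satisfying the hypotheses of Proposition~\ref{sec:10}, and reassemble the $\sigma$-word solution of $\iS(\Gamma)$ by concatenating the factors. One small simplification: the reassembly step is immediate from the fact that $\rho_\DRH$ is a homomorphism — if $\varepsilon_1$ satisfies the equations of $\iS_1$ modulo $\DRH$, then multiplying them out gives $\varepsilon(x)\varepsilon(y) =_\DRH \varepsilon(z)$ directly — so neither Lemma~\ref{sec:23}\ref{item:9} nor Lemma~\ref{sec:16} is actually needed at that point (although your surrounding observation that $\iS_1$ must be solved modulo $\DRH$, not merely factor-wise modulo $\R$, is exactly the right reason that the route through Theorem~\ref{pointlike} and Proposition~\ref{sec:10} is required).
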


We recall that, by Theorem \ref{t:6}, for every nontrivial
extension closed pseudovariety of groups~$\h$, there is an implicit
signature $\sigma(\h) \supseteq \kappa$ that turns ${\h}$
into a $\sigma(\h)$-reducible pseudovariety.
For instance,~$\G_p$ and~$\G_{sol}$
are both extension closed. Thus, ${\sf DRG}_p$ and ${\sf DRG}_{sol}$
are both $\sigma$-reducible for suitable signatures~$\sigma$.

Yet again, using Theorem~\ref{DRH_graph_eq}, some decidability
properties may be deduced from the
knowledge of $\kappa$-tameness of a pseudovariety of groups $\h$.

\begin{cor}
  Let $\h$ be a $\kappa$-tame pseudovariety of groups. Then,
  \begin{itemize}
  \item $\DRH$ is $\kappa$-tame (Theorem~\ref{t:7});
  \item $\V * \DRH$ is decidable for every decidable pseudovariety
    $\V$ containing the Brandt semigroup~$B_2$ (Theorems~\ref{t:5} and~\ref{14hyperdecidable});
  \item $\V \vee \DRH$ is hyperdecidable for every order computable
    pseudovariety $\V$ (Theorem~\ref{15hyperdecidable}).\qed
  \end{itemize}
\end{cor}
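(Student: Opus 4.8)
The plan is to obtain all three assertions as formal consequences of results already established in the paper and of the imported theorems from Section~\ref{sec:1}; no new structural work on free pro-$\DRH$ semigroups is needed, and the whole argument reduces to checking hypotheses. First I would record that $\h$ being $\kappa$-tame means, by definition, that $\h$ is $\kappa$-reducible (for systems of graph equations) and has decidable $\kappa$-word problem. Since $\kappa$ contains the non-explicit operation $x^{\omega-1}$, Theorem~\ref{DRH_graph_eq} gives that $\DRH$ is $\kappa$-reducible, while Theorem~\ref{t:7} gives that $\DRH$ has decidable $\kappa$-word problem because $\h$ does; hence $\DRH$ is $\kappa$-tame, which is the first item.

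For the remaining two items I would first upgrade this to hyperdecidability. The class of finite systems of graph equations is recursively enumerable and parameter-free, so Theorem~\ref{t:5} yields that $\DRH$ is decidable for that class, i.e.\ $\DRH$ is hyperdecidable, and in particular $n$-hyperdecidable for every $n\ge 1$. The second item then follows from Theorem~\ref{14hyperdecidable}: given a decidable pseudovariety $\V$ of rank $n$ containing $B_2$, apply that theorem with $\W=\DRH$, which is $(n+1)$-hyperdecidable, to conclude that $\V*\DRH$ is decidable. For the third item, given an order-computable pseudovariety $\V$, apply Theorem~\ref{15hyperdecidable} with the roles of ``$\V$'' and ``$\W$'' in its statement played by $\DRH$ (which is hyperdecidable) and by $\V$, respectively, to obtain that $\DRH\vee\V$ is hyperdecidable; commutativity of the join then gives the claim for $\V\vee\DRH$.

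I do not expect a genuine obstacle here, since everything is bookkeeping with the hypotheses of imported results. The only points worth a word are that $\DRH$ is recursively enumerable, so that Theorem~\ref{t:5} may legitimately be invoked --- this is inherited from the standing assumption on $\h$ together with the defining description of $\DRH$ --- and that the rank count in the application of Theorem~\ref{14hyperdecidable} matches up, which it does precisely because hyperdecidability of $\DRH$ already provides $n$-hyperdecidability for all $n$.
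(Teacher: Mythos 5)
Your proposal is correct and follows exactly the route the paper intends: combine Theorem~\ref{DRH_graph_eq} (using that $\kappa$ contains the non-explicit operation $x^{\omega-1}$) with Theorem~\ref{t:7} to get $\kappa$-tameness of $\DRH$, then feed the resulting hyperdecidability (via Theorem~\ref{t:5}) into Theorems~\ref{14hyperdecidable} and~\ref{15hyperdecidable}. The paper presents this corollary with the theorem citations inline and no separate proof, so your write-up is simply the explicit verification of those references; nothing more needs to be said.
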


We further prove that the converse of Theorem~\ref{DRH_graph_eq} also holds.

\begin{prop}\label{l:4}
  Let $\h$ be a pseudovariety of groups such that the pseudovariety
  $\DRH$ is $\sigma$-reducible. Then,
  the pseudovariety $\h$ is also $\sigma$-reducible.
\end{prop}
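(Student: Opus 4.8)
The plan is to reduce a system of graph equations over $\h$ to a system of graph equations over $\DRH$, exploiting that the free pro-$\h$ group embeds in $\pseudo A{DRH}$ as a regular $\Heq$-class (Proposition~\ref{p:3}). Concretely, suppose $\iS(\Gamma)$ is a system of graph equations with constraints given by a pair $(\varphi\colon \pseudo AH \to H,\nu)$ into a finite group~$H \in \h$, and let $\delta\colon \pseudo\Gamma S \to \pseudo AH$ be a solution modulo~$\h$ satisfying those constraints. Fix a letter $b \in A$ (enlarging $A$ if necessary) and set $B = A \cup \{b\}$. The idea is to ``prefix'' every value by a pseudoword whose cumulative content is all of~$B$; the natural candidate is $w = (b\, a_1 \cdots a_n)^\omega$ where $A = \{a_1,\dots,a_n\}$, so that $c(w) = \cum w = B$ and the regular $\Req$-class of $w$ modulo~$\DRH$ carries, via the homeomorphism $\psi_e$ of Proposition~\ref{p:3} (with $e = w^\omega$ the idempotent designated by that $\Req$-class), a copy of $\pseudo BH \supseteq \pseudo AH$. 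Define $\delta'(x) = \psi_e(\delta(x))$ if $x \in V$ and $\delta'(y) = \psi_e(\delta(y))$ if $y \in E$; since $\psi_e$ is a homomorphism on the $\Heq$-class of~$e$ and each $\delta(x)$ lies in $\pseudo AH \subseteq H_e$, this is a solution of $\iS(\Gamma)$ modulo $\DRH$ (all values are $\Heq$-equivalent to~$e$, so the products land back in $H_e$ and $\psi_e$ respects the equations $\alpha(y)y = \omega(y)$). One also transports the constraints: the composite $\rho_{\DRH,\h}\circ$(inclusion) recovers~$\delta$, so a suitable finite constraint pair over $\DRH$ records exactly the old ones.

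Next I would invoke $\sigma$-reducibility of $\DRH$: there is a solution $\varepsilon'\colon \pseudo\Gamma S \to \pseudo B{\DRH}$ (or into $(\pseudo BS)^I$) of $\iS(\Gamma)$ modulo $\DRH$ in $\sigma$-words, with the same constraints. The final step is to pull this back to~$\h$. Each $\varepsilon'(x)$ satisfies the same $\Req$-constraint as $\delta'(x)$, hence by Lemma~\ref{sec:13} lies in the $\Heq$-class of the idempotent designated by the common value of $\lbf[k]{\varepsilon'(x)}$; that idempotent is again $e$ (as in the proof of Lemma~\ref{sec:16}). Therefore $\varepsilon'(x) \in H_e$, and applying the inverse homeomorphism $\rho_{\DRH,\h}|_{H_e} = \psi_e^{-1}$ yields $\varepsilon(x) := \rho_\h(\varepsilon'(x)) \in \pseudo BH$. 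Since $\rho_\h$ is a continuous homomorphism and each equation $\alpha(y)\,y = \omega(y)$ holds modulo $\DRH$ for $\varepsilon'$, it also holds modulo~$\h$ for~$\varepsilon$; and $\varepsilon$ takes values in $\sigma$-words because $\varepsilon'$ does and $\rho_\h$ of a $\sigma$-word over $\DRH$ is a $\sigma$-word over~$\h$. Finally the constraints survive because $\varphi = \varphi \circ \rho_\h \circ$(inclusion into~$H_e$) on the relevant values. Hence $\h$ is $\sigma$-reducible.

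The main obstacle I expect is the constraint bookkeeping, not the algebra: one must arrange that the finite constraint data over $\DRH$ attached to $\delta'$ forces every $\DRH$-solution $\varepsilon'$ to have the correct $\Req$-behaviour (so that $\varepsilon'(x) \in H_e$ for the \emph{same}~$e$), and simultaneously encodes the original $\h$-constraints. Using Remark~\ref{sec:50} this is routine: take the finite semigroup for the $\DRH$-side to be built from $H_e$ together with enough of the $\Req$-structure of~$w$ to pin down the idempotent~$e$ (equivalently, constrain the images of $\lbf[k]{\varepsilon'(x)}$ for $k$ up to the stabilization bound of~$\cum w$), and let $\nu$ record $\varphi$-values through $\psi_e$. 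A secondary point to check carefully is that one is allowed to enlarge the alphabet from $A$ to $B$: the new letter~$b$ does not occur in any value of the original variables after applying $\psi_e^{-1}$ precisely because $\psi_e^{-1} = \rho_\h$ collapses the prefix and $\delta(x) \in \pseudo AH$, so the recovered solution $\varepsilon$ has content in~$A$ and the original constraints (which live over~$A$) make sense. Everything else is a direct application of Lemmas~\ref{sec:13} and~\ref{sec:16} and Proposition~\ref{p:3}.
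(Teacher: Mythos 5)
Your plan goes through a specific regular $\Heq$-class $H_e$ of $\pseudo A{DRH}$, and the crucial return step is to show that the reduced $\sigma$-word solution $\varepsilon'$ has all its values in $H_e$, so that $\rho_\h|_{H_e} = \psi_e^{-1}$ recovers a solution modulo $\h$. That step has a genuine gap: membership in a fixed regular $\Heq$-class of $\pseudo A{DRH}$ is not a clopen condition on $\pseudo AS$, so it cannot be imposed via the constraints, and $\sigma$-reducibility of $\DRH$ gives no control over it. By Lemma~\ref{sec:13}, the $\Req$-class of a pseudoword modulo $\DRH$ is governed by the full infinite sequence of iterated left basic factorizations, which no single finite quotient of $\pseudo AS$ can capture; even regularity alone is not clopen, since $(ab)^n \to (ab)^\omega$ in $\pseudo AS$ while $\cum{(ab)^n} = \emptyset$ and $\cum{(ab)^\omega} = \{a,b\}$. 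So whatever finite constraint data you attach to $\delta'$, the reduced $\varepsilon'$ may land in a different $\Req$-class (or in no regular class at all), and you then cannot identify $\rho_\h(\varepsilon'(x))$ with $\psi_e^{-1}(\varepsilon'(x))$, nor produce a solution living in $(\pseudo AS)^I$ over the original alphabet $A$ with the original constraints. The alphabet enlargement to $B = A \cup \{b\}$ compounds this, since there is also no way to force the reduced $\varepsilon'(x)$ not to involve $b$.

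The paper sidesteps this obstacle entirely: rather than enlarging the alphabet, it enlarges the \emph{graph}. A new vertex $v_0$ is added together with an edge $v_0 \xrightarrow{v} \widehat v$ for each old vertex $v$, and each old edge $v_1 \xrightarrow{e} v_2$ becomes $\widehat v_1 \xrightarrow{e} \widehat v_2$. A solution modulo $\h$ lifts to a solution modulo $\DRH$ of the new graph system by prefixing a fixed regular pseudoword $u$ over $A$ (Lemma~\ref{sec:16}). After reduction, for each old edge one gets $\varepsilon(v_0)\varepsilon(v_1)\varepsilon(e) = \varepsilon(v_0)\varepsilon(v_2)$ modulo $\DRH$, hence modulo $\h$; since $\pseudo AH$ is a group, $\varepsilon(v_0)$ cancels on the left \emph{regardless} of which $\Heq$-class, if any, the reduced $\varepsilon(v_0)$ lies in modulo $\DRH$. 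This avoids pinning down any idempotent, keeps everything over $A$, and lets the original constraints ride along unchanged on the variables of $\Gamma$.
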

\begin{proof}
  Let $\Gamma = V \uplus E$ be a graph such that $\iS(\Gamma)$
  admits $\delta: \pseudo \Gamma S
  \to (\pseudo AS)^I$ as a solution modulo $\h$. We consider a new graph
  $\widehat \Gamma = \widehat V \uplus \widehat E$, where $\widehat V
  = \{\widehat v \colon v \in V\} \uplus \{v_0\} $ and $\widehat E = V
  \uplus E $. The functions $\alpha$ and $\omega$ of $\widehat \Gamma$
  are given by $\alpha(v) = v_0$ and $\omega(v) = \widehat v$, for all
  $v \in V$ and by $\alpha(e) = \widehat v_1$ and $\omega(e) =
  \widehat v_2$ whenever $e \in E$ and $(\alpha(e), \omega(e)) =
  (v_1, v_2)$.
  The relationship between the graphs $\Gamma$ and $\widehat \Gamma$
  is depicted in Figure~\ref{fig:13}.
  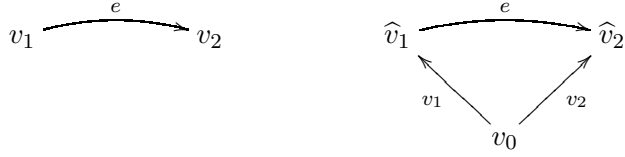
\begin{figure}[htpb]
    \centering
    $\xymatrix{v_1 \ar@/^/[rr]^e && v_2 && \widehat v_1 \ar@/^/[rr]^e
      &&\widehat  v_2
      \\ &&&&&v_0 \ar [ul]^{v_1}\ar [ur]_{v_2}&}$
    \caption{On the left, an edge of $\Gamma$; on the right, the
      corresponding edges of $\widehat\Gamma$.}
    \label{fig:13}
  \end{figure}
  Let $u \in \pseudo AS$ be a regular pseudoword modulo $\DRH$ such
  that $c(\delta(x)) \subseteq \cum u$ for all~$x \in \Gamma$.
  We take $\delta': \pseudo {\widehat \Gamma} S \to (\pseudo AS)^I$ to
  be the continuous homomorphism defined by $\delta'(e) = \delta(e)$,
  if~$e \in E$; $\delta'(v) = u\delta(v)$ and $\delta'(\widehat v) = u
  \delta(v)$, if $v \in V$; and $\delta'(v_0) = u$.
  Then, Lemma \ref{sec:16} combined with the fact that $\delta$ is a
  solution modulo $\h$ of $\iS(\Gamma)$ imply that $\delta'$ is a
  solution modulo $\DRH$ of $\iS(\widehat \Gamma)$. Thus, since $\DRH$
  is $\sigma$-reducible, there exists a
  solution in $\sigma$-words $\varepsilon: \pseudo{\widehat \Gamma}S
  \to (\pseudo AS)^I$ modulo $\DRH$ of $\iS(\widehat\Gamma)$.
  In particular, for each edge $e \in E$ such that $\alpha(e) = v_1$
  and $\omega(e) = v_2$, we have that
  $v_0v_1= \widehat v_1 $, $\widehat v_1e =
  \widehat v_2$, and $v_0v_2= \widehat v_2 $ are equations of~$\iS(\widehat \Gamma)$.
  Therefore, the equalities $\varepsilon(v_0v_1e) =
  \varepsilon(\widehat v_1e) = \varepsilon(\widehat v_2) =\varepsilon(v_0v_2)$
  hold in $\DRH$.
  Hence, $\h$ satisfies $\varepsilon(v_1e) =
  \varepsilon(v_2)$ and so, we may conclude that the restriction of
  $\varepsilon$ to $\pseudo \Gamma 
  S$ is a solution in $\sigma$-words modulo $\h$ of $\iS(\Gamma)$.
\end{proof}

Combined with Proposition~\ref{l:4}, the results in the literature supply a
family of pseudovarieties
$\DRH$ that are not $\kappa$-reducible. Namely, ${\sf DRG}_p$ and
$\DRH$ for every proper non locally finite 
subpseudovariety~$\h$ of~${\sf Ab}$ (recall Theorem \ref{t:6}).
\section{Idempotent pointlike equations}\label{sec:1005}

Theorem \ref{t:1} provides a sufficient criterion for decidability
of pseudovarieties of the form $\V \malcev \DRH$, whenever $\V$ is a
decidable pseudovariety. With that fact in mind, we take for~$\iC$
the class of all systems of idempotent pointlike equations.
In the preceding two situations, the answers to Question~\eqref{q1}
were of the form ``it is enough to assume that~$\h$ is $\sigma$-reducible with
respect to $\iC$''.
When considering systems of idempotent pointlike equations,
we have been unable to give such an answer.
However, we prove that assuming $\sigma$-reducibility of $\h$ with
respect to a still ``satisfactory'' class of systems serves our
purpose.
More precisely, we prove that, for an implicit signature $\sigma$
satisfying certain conditions, if $\h$ is a $\sigma$-reducible
pseudovariety of groups, then $\DRH$ is $\sigma$-reducible with
respect to systems of idempotent pointlike equations.

In order to make the expression ``reducible for systems of graph
equations'' more embracing, we first introduce a definition.
\begin{deff}
  Let $\V$ be a pseudovariety and $\iS$  a finite system of equations
  in the set of variables $X$ with certain
  constraints. We say that
  $\iS$ is
  \emph{$\V$-equivalent to a system of graph equations} if
  there exists a  graph  $\Gamma$  such that $X \subseteq \Gamma$
  and such that every solution modulo~$\V$ of~$\iS$ can be extended to
  a solution modulo $\V$ of $\iS(\Gamma)$
  (the constraints for the variables of $X \subseteq \Gamma$ are those
  given by the system $\iS$).
  Moreover, whenever $\delta$ is a solution modulo~$\V$
  of~$\iS(\Gamma)$, the restriction $\delta|_{\pseudo {X} S}$ is a
  solution
  modulo~$\V$ of~$\iS$.
  Each graph $\Gamma$ with that property is said to be an \emph{$\iS$-graph}
  and we say that $\iS$ is \emph{$\V$-equivalent to $\iS(\Gamma)$}
  for every $\iS$-graph $\Gamma$.
\end{deff}
It is immediate from the definition that any $\sigma$-reducible
pseudovariety $\V$
is $\sigma$-reducible for systems of equations that are $\V$-equivalent
to a system of graph equations.
In the next few results we exhibit some systems of equations that are
$\h$-equivalent to a system of graph equations (for a pseudovariety of
groups~$\h$).
Instead of giving complete proofs, we identify on each situation
what graph should be considered and leave the details to the reader.

\begin{lem}
  \label{sec:2}
  Let $\iS = \{ x_1 w_1 \cdots
  x_n w_n x_{n+1} = 1\}$ be a system consisting of a single equation,
  where $x_i$ is a variable with $x_i \neq x_j$ whenever $i \neq j$,
  $\{w_i\}_{i = 1}^n \subseteq A^*$, and
  the constraint of the
  variable $x_i$ is given by the clopen subset $K_i \subseteq (\pseudo
  AS)^I$. Then, for every pseudovariety of groups~$\h$, the
  system~$\iS$ is $\h$-equivalent to a system of graph equations.
\end{lem}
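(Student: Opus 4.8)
The plan is to realize the single equation $x_1 w_1 \cdots x_n w_n x_{n+1} = 1$ as (the restriction of) a system of graph equations by building a graph whose edges carry the variables $x_i$ and whose remaining structure encodes the constant words $w_i$ and the trivial right-hand side. First I would set up the vertex set: since the equation ``$\cdots = 1$'' should be read over a pseudovariety of groups, the natural device is a single vertex, say $v_0$, together with enough auxiliary vertices to spell out the product along a path returning to $v_0$. Concretely, introduce vertices $v_0, v_1, \ldots, v_{2n}$ (or however many intermediate points are needed to separate the $x_i$'s from the letters of the $w_i$'s), make each variable $x_i$ into an edge of $\Gamma$ from one vertex to the next, and for each constant word $w_i = a_{i,1}\cdots a_{i,k_i}$ insert a path of edges labelled by the explicit operations (letters) $a_{i,j}$ between the appropriate vertices. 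The edge labelled $x_{n+1}$ should close the path up back to $v_0$, so that traversing the whole cycle corresponds to reading the word $x_1 w_1 \cdots x_n w_n x_{n+1}$ and the graph equation ``product around the loop equals the empty path at $v_0$'' is exactly $\iS$. The constraints are transported in the obvious way: the edge $x_i$ keeps the constraint $K_i$, the edges spelling the $w_i$ get the constraint forcing them to equal the corresponding letter (or $w_i$), and $v_0$ gets the constraint $\{I\}$ (or whatever the ``$=1$'' side dictates), while the auxiliary vertices are constrained to be products of the preceding edge-values.

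The verification then splits into the two directions required by the definition of $\h$-equivalence. For one direction, given a solution $\delta$ of $\iS$ modulo $\h$, one extends $\delta$ to $\iS(\Gamma)$ by assigning to each auxiliary vertex the value of the appropriate prefix of the product (read along the path from $v_0$), to $v_0$ the identity, and to the $w_i$-edges their constant values; one checks that every graph equation $\alpha(e)e = \omega(e)$ then holds modulo $\h$ by construction, using that $\h$ is a variety of groups so that the single relation ``full loop $= 1$'' is equivalent to the conjunction of the step-by-step relations. For the converse, given any solution $\delta$ of $\iS(\Gamma)$ modulo $\h$, composing the graph equations around the loop gives $\delta(x_1)w_1\cdots \delta(x_n)w_n\delta(x_{n+1}) =_\h \delta(v_0) \cdot (\text{stuff}) = \delta(v_0)$ and the constraint on $v_0$ forces this to equal $1$; hence the restriction of $\delta$ to $\pseudo X S$ is a solution of $\iS$. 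The constraints match up because we simply copied $K_i$ onto the edge $x_i$.

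The main obstacle is bookkeeping rather than conceptual: one must be careful that the $w_i$ are arbitrary elements of $A^*$ (possibly empty), so the number of auxiliary vertices and edges depends on $i$, and one must ensure the graph is well defined when some $w_i = \varepsilon$ (in which case the two endpoints of that segment are simply identified, or connected by no edge — handled by collapsing adjacent vertices). One also needs the variables $x_i$ to remain distinct edges of $\Gamma$, which is given since they are distinct by hypothesis and the $w_i$-edges carry constant labels disjoint from the $x_i$. Finally, since $\h$ consists of groups, the identity-constraint ``$=1$'' at $v_0$ is meaningful and forces the loop product to be trivial; it is precisely this group-theoretic reading that makes the closed-loop formulation equivalent to the original one-sided equation, and it is the only place where the hypothesis ``$\h$ is a pseudovariety of groups'' is used. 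Following the paper's stated convention, I would not write out the routine check that each $\alpha(e)e = \omega(e)$ holds, but only display the graph $\Gamma$ (as in Figure~\ref{fig:13}) and indicate the assignment of values and constraints, leaving the straightforward verification to the reader.
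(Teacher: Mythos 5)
Your construction is close to the paper's, but there is a genuine gap in how you encode the right-hand side ``$=1$''. You close the path into a cycle at a base vertex $v_0$ and constrain $\delta(v_0)$ by $K_{v_0}=\{I\}$, where $I$ is the adjoined identity of $(\pseudo AS)^I$. Chaining the graph equations around the cycle then forces
\[
\rho_\h(\delta(x_1))\,w_1\,\rho_\h(\delta(x_2))\,w_2\cdots w_n\,\rho_\h(\delta(x_{n+1}))=\rho_\h(\delta(v_0))=I
\]
in $(\pseudo AH)^I$. But $I$ is a \emph{different} idempotent from the group identity $1\in\pseudo AH$: in the monoid $(\pseudo AH)^I$ a product equals $I$ only when every factor equals $I$. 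Hence as soon as some $w_i$ is a nonempty word, the system $\iS(\Gamma)$ you build has no solutions at all, so the forward half of $\h$-equivalence (every solution of $\iS$ modulo $\h$ extends to a solution of $\iS(\Gamma)$) fails. Your ``\,loop product $=1$\,'' intuition silently conflates the empty word $I$ with the group identity $1$.

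The paper avoids this by leaving the path open. Its graph is a linear path $y_1\to z_1\to y_2\to\cdots\to y_{n+1}\to z_{n+1}$ carrying the edges $x_1,w_1,x_2,w_2,\ldots,x_{n+1}$ (each $w_i$ a single edge constrained to $\{w_i\}$, which also dispenses with the bookkeeping for $w_i=\varepsilon$ that worried you), together with one extra shortcut edge $x_0$ from $y_1$ to $z_{n+1}$ constrained to $\{I\}$. The edge equation $y_1x_0=z_{n+1}$ then forces only $\rho_\h(\delta(y_1))=\rho_\h(\delta(z_{n+1}))$, which together with the chain yields
\[
\rho_\h(\delta(y_1))=\rho_\h(\delta(y_1))\cdot\rho_\h(\delta(x_1))\,w_1\cdots\rho_\h(\delta(x_{n+1}));
\]
here $\rho_\h(\delta(y_1))$ is necessarily a nonempty element of the group $\pseudo AH$, so cancelling it gives exactly ``product $=1$'', and in the forward direction one is free to choose $\delta(y_1)$ nonempty. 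Your cycle could be salvaged by dropping the $\{I\}$-constraint on $v_0$ (take $K_{v_0}=(\pseudo AS)^I$) and invoking the same group cancellation around the loop; as written, though, it is over-constrained and the equivalence breaks.
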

\begin{proof}
  Let $\Gamma = V \uplus E$ be the finite graph with the sets of
  vertices and edges given by $V = \{y_i, z_i \colon i = 1,
  \ldots, n+1 \}$ and $E  =  \{x_0\} \uplus \{ x_i \colon i = 1,
    \ldots, 
    n+1\} \uplus \{ w_i \colon i = 1, \ldots, n\}$, respectively.
  To define the mappings~$\alpha$ and~$\omega$, we take
  \begin{align*}
    (\alpha(x_0), \omega(x_0)) &= (y_1, z_{n+1});
    \\ (\alpha(x_i), \omega(x_i)) & = (y_i, z_i), \quad \text{ for $i = 1, 
                                    \ldots, n+1$};
    \\ (\alpha(w_i), \omega(w_i)) & = (z_i, y_{i+1}), \quad \text{ for
                                    $i = 1,
                                    \ldots, n$ };
  \end{align*}
  as shown in Figure~\ref{fig:1}.
   \begin{figure}[htpb]\centering
    $\xymatrix{
      y_1 \ar @/^/  [r]^{x_1}  \ar@/_20pt/ [rrrrrrr]_{x_0}&
      z_1 \ar @/^/  [r]^{w_1} &
      y_2 \ar @/^/  [r]^{x_2} &
      z_2 \ar @/^/  [r]^{w_2} & \ar@{}[r] |{\cdots}& z_n
      \ar @/^/  [r]^{w_n} &
      y_{n+1} \ar @/^/ [r]^{x_{n+1}} &z_{n+1}}$
    \caption{The graph $\Gamma$.}
    \label{fig:1}
  \end{figure}
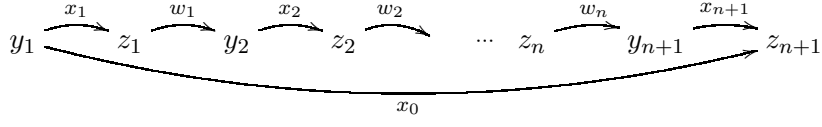
  Let us the denote by $K_x$ the clopen set that defines the
  constraint of~$x \in \Gamma$. We set
  $K_{x_i} = K_i$, $K_{w_i} = \{w_i\}$, $K_{x_0} = \{I\}$, and
  $K_{y_{i}} = (\pseudo AS)^I = K_{z_i}$ (for every $i$ such that each
  of the variables is defined).
  Then, $\Gamma$ is an $\iS$-graph.
\end{proof}

\begin{lem}
  \label{sec:3}
  Let $\h$ be a pseudovariety of groups.
  If $\iS$ is $\h$-equivalent to a
  system of graph equations, $x$ is
  a variable occurring in $\iS$, and $\iS_0 = \{x = x_1 = \cdots =
  x_n\}$, where $x_1, \ldots, x_n$ are new variables, then $\iS
  \cup\iS_0$ is also $\h$-equivalent to a system of graph equations.
\end{lem}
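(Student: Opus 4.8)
The plan is to turn an $\iS$-graph into an $(\iS\cup\iS_0)$-graph by attaching to it, for each new variable $x_i$, an edge whose graph equation forces $\delta(x_i)=_\h\delta(x)$, exploiting that $\pseudo AH$ is a group. Fix an $\iS$-graph $\Gamma=V\uplus E$, which exists by hypothesis; since $X\subseteq\Gamma$, the symbol $x$ is either a vertex or an edge of $\Gamma$, and this dichotomy governs the construction.

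I would let $\Gamma'$ be obtained from $\Gamma$ by adjoining one fresh vertex $v_0$ with constraint $K_{v_0}=\{I\}$ together with $n$ fresh edges $x_1,\dots,x_n$ carrying the constraints prescribed by $\iS_0$, attached as follows: if $x$ is a vertex of $\Gamma$, set $(\alpha(x_i),\omega(x_i))=(v_0,x)$; if instead $x$ is an edge of $\Gamma$ with $(\alpha(x),\omega(x))=(v_1,v_2)$, set $(\alpha(x_i),\omega(x_i))=(v_1,v_2)$ (in this second case the vertex $v_0$ is an isolated vertex and plays no role). In the first case the equations $v_0x_i=x$ of $\iS(\Gamma')$, together with $\delta(v_0)=I$, give $\delta(x_i)=_\h\delta(x)$; in the second, the equations $v_1x_i=v_2$ and $v_1x=v_2$ together with left cancellation in $\pseudo AH$ give the same conclusion. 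Since every equation of $\iS(\Gamma)$ survives in $\iS(\Gamma')$, the restriction to $\pseudo\Gamma S$ of any solution of $\iS(\Gamma')$ modulo $\h$ solves $\iS(\Gamma)$, hence $\iS$; and by the previous remark the full homomorphism also forces $x=x_1=\cdots=x_n$, so its restriction to the variables of $\iS\cup\iS_0$ solves that system.

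For the converse direction I would start from a solution $\delta$ of $\iS\cup\iS_0$ modulo $\h$, restrict it to the variables of $\iS$, extend this to a solution $\bar\delta$ of $\iS(\Gamma)$ (possible since $\Gamma$ is an $\iS$-graph, so $\bar\delta$ agrees with $\delta$ on $X$), and then set $\bar\delta(v_0)=I$ and $\bar\delta(x_i)=\delta(x_i)$. The equations added in passing from $\iS(\Gamma)$ to $\iS(\Gamma')$ are then satisfied because $\delta$ solves $\iS_0$ and $\bar\delta$ agrees with $\delta$ on $x$, while the constraints on $v_0$ and the $x_i$ hold by construction; thus $\bar\delta$ is a solution of $\iS(\Gamma')$ extending $\delta$. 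Hence $\Gamma'$ is an $(\iS\cup\iS_0)$-graph, which is what was wanted.

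There is no real obstacle here: the only point that needs attention is the vertex/edge dichotomy for $x$, which determines where the new edges are attached; the rest is routine bookkeeping of constraints, of exactly the kind the surrounding text explicitly leaves to the reader.
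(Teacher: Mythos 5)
Your construction of $\Gamma'$ is essentially the one the paper uses: when $x$ is a vertex, $n$ parallel edges labelled $x_1,\dots,x_n$ from a new vertex into $x$; when $x$ is an edge, $n$ parallel edges from $\alpha(x)$ to $\omega(x)$; and your verification of both directions of $\h$-equivalence (via left cancellation in $\pseudo AH$) is correct. Your explicit choice $K_{v_0}=\{I\}$ is in fact a necessary refinement of what the paper says: the paper asserts the constraints on new variables "may be taken to be $\pseudo AS$," but with the auxiliary vertex unrestricted a solution of $\iS(\Gamma')$ with $\delta(x_0)\neq_\h 1$ would restrict to a homomorphism violating $x=x_1$, so the constraint on the new vertex does have to force value $1$ in $\h$.
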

\begin{proof}
  Let $\Gamma = V \uplus E$ be an $\iS$-graph. We construct a new
  graph $\Gamma'$ as follows.
  If $x \in V$, then we consider new variables $x_0, x_1, \ldots, x_n$
  and $\Gamma'$ is obtained by adding to $\Gamma$ the edges
  represented in Figure~\ref{fig:15} on the left.
  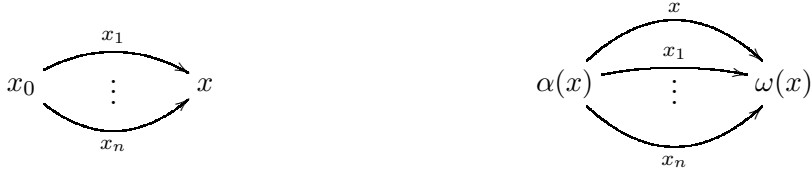
\begin{figure}[htpb]
    \centering
    $\xymatrix{ x_0 \ar@/^1pc/[rr]^{x_1}\ar@/_1.5pc/[rr]_{x_n}
      \ar@{}[rr] |{\vdots}&& x & &&&\alpha(x)\ar@/^2pc/[rr]^{x} \ar@/^/[rr]^{x_1}\ar@/_2pc/[rr]_{x_n}
      \ar@{}[rr] |{\vdots} && \omega(x)}$
    \caption {The piece of the graph $\Gamma'$ where it differs from
      $\Gamma$, when $x \in V$ (left) and when~$x \in E$ (right).}
    \label{fig:15}
  \end{figure}
  Otherwise, if $x \in E$, then $n$ new edges are added to~$\Gamma$  as
  depicted in Figure~\ref{fig:15} on the right, resulting the graph $\Gamma'$.
  We do not explicit the constraints on the new variables, since they
  may be taken to be given by the clopen set~$\pseudo AS$.
  In any case, it is a routine matter to verify that the system $\iS\cup\iS_0$ is
  $\h$-equivalent to~$\iS(\Gamma')$.
\end{proof}

\begin{lem}\label{l:5}
  Let $\h$ be a pseudovariety of groups,
  $\iS$ be a system of equations with variables in $X$ that is
  $\h$-equivalent to a system of  graph equations and $\iS_0 = \{x =
  x_1 w_1 \cdots x_n w_n 
  x_{n+1}\}$, where $x \in X$, $x_1,
  \ldots, x_{n}$ are new variables, $x_{n+1}$ is either the empty word
  or a new variable, and $\{w_i\}_{i = 1}^n \subseteq A^*$. Then, $\iS \cup
  \iS_0$ is also $\h$-equivalent to a system of graph equations.
\end{lem}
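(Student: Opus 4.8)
The plan is to mimic the constructions used in Lemmas~\ref{sec:2} and~\ref{sec:3}. Fix an $\iS$-graph $\Gamma = V \uplus E$, which exists since $\iS$ is $\h$-equivalent to a system of graph equations. Since $x \in X \subseteq \Gamma$, the variable $x$ is either a vertex or an edge of $\Gamma$, and in each case I would enlarge $\Gamma$ to a graph $\Gamma'$ by gluing on a directed path that ``spells out'' the right-hand side $x_1 w_1 \cdots x_n w_n x_{n+1}$, then verify that $\iS \cup \iS_0$ is $\h$-equivalent to $\iS(\Gamma')$. Throughout, the $w_i$ will label edges carrying the singleton constraint $K_{w_i} = \{w_i\}$ (as already done in Lemma~\ref{sec:2}), the new variables $x_i$ will carry the trivial constraint $\pseudo AS$, and the variables of $X$ keep their original constraints.

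If $x \in V$, I would add new vertices $v_0, t_1, t_1', \dots, t_n'$ together with edges forming the path
\[
v_0 \xrightarrow{x_1} t_1 \xrightarrow{w_1} t_1' \xrightarrow{x_2} t_2 \xrightarrow{w_2} \cdots \xrightarrow{w_n} t_n' \xrightarrow{x_{n+1}} x ,
\]
with the convention that when $x_{n+1}$ is the empty word the last edge is suppressed and $t_n'$ is identified with $x$; moreover I impose $K_{v_0} = \{I\}$. Reading the graph equations along this path gives, for the value at its final vertex, the identity $x = v_0 \cdot x_1 w_1 \cdots x_n w_n x_{n+1}$, and the constraint $K_{v_0} = \{I\}$ turns this into exactly the equation of $\iS_0$. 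If instead $x \in E$ with $\alpha(x) = a$ and $\omega(x) = b$, I would add new vertices $t_1, t_1', \dots, t_n'$ and edges forming a path parallel to $x$,
\[
a \xrightarrow{x_1} t_1 \xrightarrow{w_1} t_1' \xrightarrow{x_2} \cdots \xrightarrow{w_n} t_n' \xrightarrow{x_{n+1}} b ,
\]
again suppressing the last edge and setting $t_n' = b$ when $x_{n+1}$ is empty. The graph equation $ax = b$ of $\iS(\Gamma)$ together with the equations of the parallel path give $a\,x = b = a\,x_1 w_1 \cdots x_n w_n x_{n+1}$, and cancelling $a$ yields the equation of $\iS_0$.

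It then remains to check the two halves of $\h$-equivalence. For the forward direction, a solution $\delta$ of $\iS \cup \iS_0$ modulo $\h$ restricts to a solution of $\iS$, which by the $\iS$-graph property of $\Gamma$ extends to a solution of $\iS(\Gamma)$; I extend this further to $\Gamma'$ by sending each $x_i$ to $\delta(x_i)$, each $w_i$-edge to the word $w_i$, the vertex $v_0$ to $I$ in the first case, and each new vertex to the appropriate partial product. The new path equations then hold precisely because $\delta$ satisfies $\iS_0$ (and, in the edge case, already satisfies $ax = b$), and all new constraints hold by construction. For the backward direction, any solution $\varepsilon$ of $\iS(\Gamma')$ restricts to a solution of $\iS(\Gamma) \subseteq \iS(\Gamma')$, hence of $\iS$; and, by the computations above, the path equations together with the constraint $\varepsilon(v_0) = I$ (vertex case) or the cancellation of $\varepsilon(a)$ (edge case) and the constraints $\varepsilon(w_i) = w_i$ force $\varepsilon(x) = \varepsilon(x_1) w_1 \cdots w_n \varepsilon(x_{n+1})$ modulo $\h$, so $\varepsilon$ restricted to the variables of $\iS \cup \iS_0$ solves $\iS \cup \iS_0$.

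The only genuinely non-formal step is the cancellation of the value of $\alpha(x)$ in the edge case: this is exactly where the hypothesis that $\h$ is a pseudovariety of \emph{groups} is used, since it makes the relevant relatively free pro-$\h$ semigroup a group. Everything else is routine bookkeeping about paths in graphs and restrictions of solutions, which is why it is reasonable — following the pattern of Lemmas~\ref{sec:2} and~\ref{sec:3} — to record the construction of $\Gamma'$ and leave the verification to the reader.
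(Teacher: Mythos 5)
Your construction is essentially the paper's: in both, one adjoins to an $\iS$-graph $\Gamma$ a path spelling out $x_1 w_1 \cdots x_n w_n x_{n+1}$, starting from a new vertex constrained to $\{I\}$ when $x$ is a vertex, or running parallel to $x$ between $\alpha(x)$ and $\omega(x)$ when $x$ is an edge. The only difference is cosmetic — the paper treats an empty $x_{n+1}$ uniformly as a variable with constraint $\{I\}$ rather than suppressing the last edge — and your observation that the edge case uses left-cancellation in the pro-$\h$ group is a useful gloss on a point the paper leaves implicit.
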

\begin{proof}
  We start by observing that it really does not matter whether
  $x_{n+1}$ is the empty word or a new variable. Indeed, if it is the
  empty word, then we just need to set a constraint $K_{x_{n+1}} = \{I\}$ for
  $x_{n+1}$ and we may treat it as a variable.

  Let $\Gamma = V \uplus E$ be an $\iS$-graph. We construct a new
  graph $\Gamma'$ depending on whether~$x$ is a
  vertex or an edge.
  If $x$ is a vertex, then we add to $\Gamma$ a new path going from
  a new vertex $y_1$ to $x$, whose edges are labeled by $x_1, w_1,
  \ldots, x_n, w_n,x_{n+1}$ in this order, as depicted in Figure
  \ref{fig:14}.
  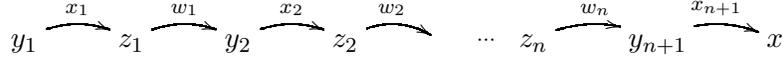
\begin{figure}[htpb]
    \centering
    $\xymatrix{
      y_1 \ar @/^/  [r]^{x_1} &
      z_1 \ar @/^/  [r]^{w_1} &
      y_2 \ar @/^/  [r]^{x_2} &
      z_2 \ar @/^/  [r]^{w_2} & \ar@{}[r] |{\cdots}& z_n
      \ar @/^/  [r]^{w_n} &
      y_{n+1} \ar @/^/ [r]^{x_{n+1}} &x}$
    \caption{The new path in $\Gamma$ if $x$ is a vertex.}
    \label{fig:14}
  \end{figure}
  We further take $K_{y_1} = \{I\}$, $K_{y_{i+1}} = (\pseudo AS)^I =
  K_{z_i}$, and $K_{w_i} = \{w_i\}$ as the clopen
  sets defining the constraints for the new variables $y_{i+1}$, $z_i$,
  and $w_i$, respectively ($i = 1, \ldots, n$).
  On the other hand, when $x$ is an edge, we
  simply obtain $\Gamma'$ by adding a path in $\Gamma$ from $\alpha(x)$ to
  $\omega(x)$ with edges labeled by $x_1, w_1, \ldots, x_n, w_n,
  x_{n+1}$ (see Figure~\ref{fig:2}).
  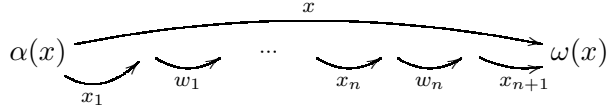
\begin{figure}[htpb]
    \centering
    $\xymatrix{
      \alpha(x) \ar @/_1pc/  [r]_{x_1}  \ar@/^1pc/ [rrrrrr]^{x}&
       \ar @/_0.5pc/  [r]_{w_1} &\ar@{}[r] |{\cdots}&
       \ar @/_0.5pc/  [r]_{x_n} &
       \ar @/_0.5pc/  [r]_{w_n} &
      \ar @/_0.5pc/  [r]_{x_{n+1}} &\omega(x)}$
    \caption{The added path to $\Gamma$ if $x$ is an
      edge.}
    \label{fig:2}
  \end{figure}
  We leave it to the reader to verify that, in both
  situations, $\iS\cup \iS_0$ is $\h$-equivalent to $\iS(\Gamma')$.
\end{proof}
\begin{cor}
  \label{c:1}
  Let $\h$ be a pseudovariety of groups and
  let $\iS$ be a system of equations $\h$-equivalent to a system of
  graph equations and suppose that $x_1, \ldots, x_N$ are variables
  occurring in $\iS$. Also, suppose that the
  variables $y_{i,1}, \ldots, y_{i,k_i}$ and $z_{i,1}, \ldots, z_{i,
    n_i}$ ($i = 1, \ldots, N$) are all distinct and do not occur in $\iS$, and let
  $\{w_{i,p}\colon i = 1, \ldots, N; \: p = 1, \ldots, k_i\} \subseteq
  A^*$. We make each $t_i$ be either the empty word or another
  different variable.
  Then, the system of equations
  \begin{align*}
  \iS' = \iS  & \cup \{x_i = y_{i,1} w_{i,1} \cdots y_{i, k_i}
  w_{i,k_i} t_i\}_{i = 1}^N \\ &\cup \{ t_i = z_{i,1} = \cdots = z_{i,
    n_i} \colon i = 1, \ldots, N \text{ and } t_i \text{ is not the
    empty word}\}
  \end{align*}
  is also $\h$-equivalent to a system of graph equations.
\end{cor}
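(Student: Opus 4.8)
The plan is to build $\iS'$ out of $\iS$ by a finite sequence of the elementary enlargements supplied by Lemmas~\ref{l:5} and~\ref{sec:3}, at every step remaining inside the class of systems that are $\h$-equivalent to a system of graph equations; iterating that class's closure under these two operations yields the result.

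First I would adjoin, one at a time, the equations $x_i = y_{i,1}w_{i,1}\cdots y_{i,k_i}w_{i,k_i}t_i$ for $i = 1, \ldots, N$. Set $\iS^{(0)} = \iS$, which is $\h$-equivalent to a system of graph equations by hypothesis. Supposing $\iS^{(i-1)}$ has been constructed and is still $\h$-equivalent to a system of graph equations, observe that $x_i$ occurs in $\iS \subseteq \iS^{(i-1)}$, and that the variables $y_{i,1}, \ldots, y_{i,k_i}$ and $t_i$ (when $t_i$ is not the empty word) are new, since by the distinctness assumptions they occur neither in $\iS$ nor among the variables introduced for indices $j < i$. Hence Lemma~\ref{l:5} applies with $x = x_i$, and $\iS^{(i)} := \iS^{(i-1)} \cup \{x_i = y_{i,1}w_{i,1}\cdots y_{i,k_i}w_{i,k_i}t_i\}$ is again $\h$-equivalent to a system of graph equations. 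After $N$ steps we obtain $\iS^{(N)} = \iS \cup \{x_i = y_{i,1}w_{i,1}\cdots y_{i,k_i}w_{i,k_i}t_i\}_{i = 1}^N$.

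Next, for each index $i$ for which $t_i$ is not the empty word, $t_i$ is by now a variable occurring in $\iS^{(N)}$, and the variables $z_{i,1}, \ldots, z_{i,n_i}$ are new (again by distinctness). Thus Lemma~\ref{sec:3}, applied with $x = t_i$ and the new variables $z_{i,1}, \ldots, z_{i,n_i}$, lets me adjoin the chain $t_i = z_{i,1} = \cdots = z_{i,n_i}$ while preserving $\h$-equivalence to a system of graph equations. Performing this for all such $i$ produces precisely $\iS'$, which is therefore $\h$-equivalent to a system of graph equations, as claimed. The constraints of all newly introduced variables are taken to be given by $\pseudo AS$, with the constraint $\{I\}$ on $t_i$ whenever $t_i$ stands for the empty word, exactly as in the proof of Lemma~\ref{l:5}.

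I do not expect a genuine obstacle; the only point requiring care is bookkeeping. One must check at each application that the ``old'' variable ($x_i$, respectively $t_i$) really does occur in the current system and that the ``new'' variables have not been used at an earlier stage — both guaranteed by the distinctness hypotheses in the statement — and one must respect the order of the two batches, since the equations $t_i = z_{i,1} = \cdots = z_{i,n_i}$ can be added only after the first batch has made $t_i$ occur in the system.
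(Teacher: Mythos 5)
Your proof is correct and follows the same route as the paper, which simply says the result follows by successively applying Lemmas~\ref{sec:3} and~\ref{l:5}; you add the welcome clarification that Lemma~\ref{l:5} must be applied before Lemma~\ref{sec:3} so that each $t_i$ already occurs when the corresponding pointlike chain is adjoined.
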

\begin{proof}
  The result follows immediately by successively applying Lemmas \ref{sec:3}
  and \ref{l:5}.
\end{proof}

The next statement consists of a general scenario that is instrumental
for establishing the claimed answer to Question \eqref{q1} mentioned in
the beginning of this section.

\begin{prop}\label{eq_especiais}
  Let $\h$ be a $\sigma$-reducible pseudovariety
  of groups, where $\sigma$ is an implicit signature such that
  $\langle\sigma\rangle$ contains a non-explicit operation $\eta$ such
  that  $\eta = 1$ in $\h$.
  Let $\iS_1$ and $\iS_2$ be finite systems of equations, such that
  $\iS_1$ contains only pointlike equations, and both $\iS_1 \cup \iS_2$ and
  $\iS_2$ are $\h$-equivalent to systems of
  graph equations.
  Further assume that, if $X$ is the set of variables occurring in
  $\iS_1 \cup\iS_2$, then the constraint on each variable $x \in X$ is given by
  a clopen subset $K_x \subseteq (\pseudo AS)^I$.
  Then, the existence of a continuous homomorphism that is
  simultaneously a solution modulo~$\DRH$ 
  of~$\iS_1$ and a solution modulo~$\h$ of~$\iS_2$ entails the
  existence of
  a continuous homomorphism in $\sigma$-words with the same property.
\end{prop}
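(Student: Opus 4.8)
The plan is to combine the $\sigma$-reducibility of $\h$ applied to $\iS_2$ with Theorem~\ref{pointlike} applied to $\iS_1$, gluing the two partial solutions together using the non-explicit operation $\eta$ that collapses to $1$ in $\h$. Write $\delta$ for the given continuous homomorphism that is a solution modulo $\DRH$ of $\iS_1$ and a solution modulo $\h$ of $\iS_2$. Since $\iS_1$ consists of pointlike equations, Theorem~\ref{pointlike} (together with Remark~\ref{sec:17}, which lets us preserve cumulative contents as well as the values under $\varphi$) produces a solution $\varepsilon_1$ modulo $\DRH$ of $\iS_1$ in $\sigma$-words with $\varphi(\varepsilon_1(x)) = \varphi(\delta(x))$ and $\cum{\varepsilon_1(x)} = \cum{\delta(x)}$ for every variable $x$ of $\iS_1$. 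On the other hand, since $\iS_2$ is $\h$-equivalent to a system of graph equations and $\h$ is $\sigma$-reducible, there is a solution $\varepsilon_2$ modulo $\h$ of $\iS_2$ in $\sigma$-words, again preserving the values under $\varphi$.

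The difficulty, as the surrounding text admits, is that $\varepsilon_1$ and $\varepsilon_2$ are defined on overlapping sets of variables, and $\varepsilon_1$ need not satisfy $\iS_2$ (it is only a solution of $\iS_1$), while $\varepsilon_2$ need not satisfy $\iS_1$ modulo $\DRH$. The way to reconcile them is the same periodicity trick used in the proof of Theorem~\ref{pointlike}: for each variable $x$ shared by $\iS_1$ and $\iS_2$, one iterates the left basic factorization of $\delta(x)$ and, using finiteness of $S$, locates indices $k < \ell$ at which the $\varphi$-image of the accumulated prefix repeats; one then defines $\varepsilon(x)$ to be a product of the form (prefix built from $\varepsilon_1$-data)$\cdot\eta(\text{middle block})\cdot$(tail built from $\varepsilon_2$-data). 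Concretely, I would set up auxiliary systems of pointlike equations $\iS_1'$ (on the prefix- and middle-block variables, to be solved via Theorem~\ref{pointlike}) and keep $\iS_2$ essentially intact for the tail, and then assemble $\varepsilon$ so that: modulo $\h$, the $\eta$-block disappears and the tail from $\varepsilon_2$ guarantees $\varepsilon$ solves $\iS_2$ (here the hypothesis $\eta =_\h 1$ is exactly what is needed); modulo $\DRH$, since $\eta$ is non-explicit and $S$ has a content function, the $\eta$-block forces the values to be $\Req$-equivalent, and the prefix data from $\varepsilon_1$ handles the $\Req$-structure, so $\varepsilon$ solves $\iS_1$. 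Invoking Lemma~\ref{sec:16} — $\DRH$ satisfies an equation iff both $\Req$ modulo $\DRH$ and $=_\h$ hold — then upgrades the $\Req$-equivalence plus the $\h$-equality into genuine $\DRH$-equalities for the pointlike equations of $\iS_1$, while the constraints are preserved by the choice of $k$ and $\ell$ exactly as in~\eqref{eq:21}.

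The main obstacle I anticipate is bookkeeping: making the splitting of each shared variable $x$ compatible across all the pointlike classes it belongs to in $\iS_1$ (so that the cut indices $k,\ell$ can be chosen uniformly, which is possible by replacing them with a common refinement, again using finiteness of $S$), and verifying that the resulting auxiliary pointlike system $\iS_1'$ really does have a solution modulo $\DRH$ extending the data of $\delta$ — this is where Remark~\ref{sec:17} on preserving cumulative contents is essential, because the tail variables fed into $\iS_2$ must retain the property $c(\cdot) \subseteq \cum{\cdot}$ needed to apply Lemma~\ref{sec:16} when gluing. Once the pieces are aligned, the verification that $\varepsilon$ is simultaneously a $\sigma$-word solution modulo $\DRH$ of $\iS_1$ and modulo $\h$ of $\iS_2$, with the prescribed constraints, is a routine computation along the lines of the last display in the proof of Theorem~\ref{pointlike}.
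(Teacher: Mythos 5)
Your opening plan --- apply Theorem~\ref{pointlike} to $\iS_1$ to obtain $\varepsilon_1$, apply $\sigma$-reducibility of $\h$ to $\iS_2$ to obtain $\varepsilon_2$, and then glue --- does not work, and the ``periodicity trick'' you gesture at does not repair it. The obstruction you name (overlapping variables) is real, but your fix never addresses it: when a variable $x$ is shared, the two homomorphisms $\varepsilon_1$ and $\varepsilon_2$ will assign genuinely unrelated $\sigma$-words to $x$, and your proposed definition $\varepsilon(x) = (\text{prefix from }\varepsilon_1)\cdot\eta(\cdots)\cdot(\text{tail from }\varepsilon_2)$ will not, modulo $\h$, equal $\varepsilon_2(x)$. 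Then for an equation of $\iS_2$ relating the shared variable $x$ to an unshared variable $y$ (for which $\varepsilon(y)=\varepsilon_2(y)$), satisfaction modulo $\h$ fails. Nothing in the proposal forces the $\varepsilon_1$-prefix and the $\varepsilon_2$-prefix to agree modulo $\h$, and indeed there is no reason they should.

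The paper's proof avoids this by never producing two independent solutions to glue. Instead it proceeds by induction on $M=\max_i |c(\delta(x_{i,p}))|$ and, in the inductive step, constructs a \emph{single} pair of systems: a reduced pointlike system $\iS_1'$ and an enlarged system $\iS_2'$ obtained from $\iS_1\cup\iS_2$ (not from $\iS_2$ alone) by adjoining, via Corollary~\ref{c:1}, the decomposition equations $x_{i,p_i}=x_{i,p_i;1}a_{i,1}\cdots x_{i,p_i;k_i}a_{i,k_i}z_{i,p_i}$ together with the pointlike equations on the $x'_{i,p}$. This is where the hypothesis that $\iS_1\cup\iS_2$ (and not merely $\iS_2$) is $\h$-equivalent to a graph system enters; you never use that hypothesis at all. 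The homomorphism $\delta'$ is simultaneously a $\DRH$-solution of $\iS_1'$ and an $\h$-solution of $\iS_2'$, and since the induction parameter drops, the induction hypothesis returns one homomorphism $\varepsilon'$ in $\sigma$-words with both properties at once --- which automatically keeps all the decomposed pieces consistent modulo $\h$. Your proposal keeps ``$\iS_2$ essentially intact'' and invokes Theorem~\ref{pointlike} directly on $\iS_1'$, thereby obtaining two separate solutions again and reproducing the original inconsistency. The induction, the enlargement of $\iS_2$ to $\iS_2'\supseteq\iS_1\cup\iS_2$, and the use of Corollary~\ref{c:1} to keep $\iS_2'$ and $\iS_1'\cup\iS_2'$ $\h$-equivalent to graph systems are the missing ideas.
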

\begin{proof}
  Without loss of generality, we assume that $\eta$ is a unary implicit operation.
  Let $\iS_1 = \{x_{i,1} = \cdots = x_{i,n_i}\}_{i = 1}^N$, with
  $x_{i,p} \neq x_{j,q}$, for all $i \neq j$.
  We consider a continuous
  homomorphism $\varphi: (\pseudo AS)^I \to S^I$ such that each clopen
  set $K_x$ is the preimage of a
  finite subset of $S^I$ under~$\varphi$ (recall Remark~\ref{sec:50}).
  We argue by induction on the parameter $$M = \max\{\card{c(\delta(x_{i,p}))}\colon i =
  1, \ldots, N, \: p = 1, \ldots, n_i\}.$$

  If $M = 0$, then $\delta(x_{i,p}) = I$ for all $i = 1, \ldots, N$
  and $p = 1, \ldots, n_i$ and therefore,
  every solution $\varepsilon$ modulo $\h$ of $\iS_2$ such that
  $\varepsilon (x_{i,p}) = I$ (for $i = 1, \ldots, N$, and $p = 1,
  \ldots, n_i$) is trivially a solution modulo $\DRH$ of $\iS_1$.
  Since we are assuming that $\iS_2$ is $\h$-equivalent to a system of graph
  equations and we are taking for $\h$ a $\sigma$-reducible
  pseudovariety, there exists such an $\varepsilon$ given by $\sigma$-words.
  
  Suppose that $M > 0$ and that the result holds for any smaller
  parameter.
  If $\delta(x_{i,p})$ has empty cumulative content, then
  we let $k_i$ be the maximum integer such that
  $\lbf[k_i]{\delta(x_{i,p})}$ is nonempty and we write $\lbf[m]{\delta(x_{i,p})} = \delta(x_{i,p})_ma_{i,m}$, for $m =
  1, \ldots, k_i$.
  Otherwise, for each $m \ge
  1$, we consider the $m$-th iteration of the left basic
  factorization to the right of $\delta(x_{i,p})$, say $\delta(x_{i,p})
    = \delta(x_{i,p})_1a_{i,1} \cdots \delta(x_{i,p})_{m} a_{i,m}
    \delta(x_{i,p})_m'$.
  Since $S$ and $A$ are both finite, there are integers $1 \le k < \ell$ such
  that, 
  for all $i$, $p$ satisfying $\cum{\delta(x_{i,p})} \neq \emptyset$, we
  have
  \begin{align*}
    \vec{c}(\delta(x_{i,p}))  &= c(\delta(x_{i,p})_{k+1}a_{i,k+1});
    \\  \varphi(\delta(x_{i,p})) &= \varphi( \delta(x_{i,p})_1a_{i,1}
                                   \cdots \delta(x_{i,p})_{k} a_{i,k})
    \\ & \quad\cdot \eta( \delta(x_{i,p})_{k+1}a_{i,k+1}
         \cdots\delta(x_{i,p})_{\ell} a_{i,\ell})
         \delta(x_{i,p})_k').
  \end{align*}

  Now, consider a new set of variables $X'$ given by the union
  \begin{align*}
    X \uplus \{x_{i,p;m}, a_{i,m} \colon i = 1, \ldots, N; \: p =
             1, \ldots, n_i; \:
                        \vec{c}(\delta(x_{i,p})) = \emptyset; \,m = 1,
                        \ldots, k_i\}
    \\ \uplus \{x_{i,p;m}, a_{i,m}, x_{i,p}' \colon i = 1, \ldots, N; \: p =
             1, \ldots, n_i; \: \vec{c}(\delta(x_{i,p})) \neq
         \emptyset;\,m = 1, \ldots, \ell\},
  \end{align*}
  where all the introduced variables are distinct.
  In order to simplify the notation, we set $\ell_i = 0$ if
  $\cum{\delta(x_{i,p})} = \emptyset$, and $k_i = k$ and $\ell_i = \ell$,
  otherwise.
  We further take the constraints on~$X'$ to be given by $K_x$ if $x \in X$,
  and by the clopen sets $K_{x_{i,p;m}} =
  \varphi^{-1}(\varphi(\delta(x_{i,p})_m)$,
  $K_{a_{i,m}} = \{a_{i,m}\}$, and $K_{x_{i,p}'} =
  \varphi^{-1}(\varphi(\delta(x_{i,p})'_k)$ for the remaining cases.
  
  Consider the system
  $$\iS_1' = \{x_{i,1;m} = \cdots = x_{i,n_i;m} \colon i = 1,  \ldots,
  N; \: m = 1, \ldots, \max\{k_i,\ell_i\}\}.$$
  A new system $\iS_2'$ is
  obtained from the system $\iS_1 \cup \iS_2$ (which is
  $\h$-equivalent to a system of graph equations, by hypothesis) by
  adding two sets of equations:
  \begin{itemize}
  \item for each $i = 1, \ldots, N$, if there exists an index $p$ such
    that $x_{i,p}$ is a variable occurring in $\iS_2$, then we choose
    such an index, say $p_i$. Then, we add the equation $$x_{i,p_i} =
    x_{i,p_i;1}a_{i,1} \cdots x_{i,p_i;k_i}a_{i,k_i} z_{i,p_i},$$
    where $z_{i,p_i}$ stands for the empty word if $ \ell_i = 0$, and for $x_{i,p_i} '$ otherwise;
  \item and we add the set of equations $$\{x_{i,1}' =
    \cdots = x_{i,n_i}' \colon i = 1, \ldots, N, \:\ell_i \neq 0\}.$$
  \end{itemize}
  By Corollary~\ref{c:1}, the new system $\iS_2'$ is still $\h$-equivalent
  to a system of graph equations.
  Moreover, if we denote by $X_j'$ the set of variables occurring in $\iS_j'$ ($j
  = 1,2$), then the following equality holds:
  $$X_1' \cap X_2' = \{x_{i,p_i;m} \colon i = 1, \ldots, N;
  \text{ $p_i$ is defined; and } m = 1, \ldots, \max\{k_i,\ell_i\}\}.$$
  Thus, again Corollary \ref{c:1} yields that the system $\iS_1'\cup
  \iS_2'$ is  $\h$-equivalent to a system of graph
  equations as well.
  Let $\delta': \pseudo {X'}S \to
  (\pseudo AS)^I$ be the continuous homomorphism defined by
  \begin{align*}
    \delta'(x_{i,p;m})
    & = \delta(x_{i,p})_m,\quad\text{if $i = 1,
      \ldots, N$; $p = 1, \ldots,n_i $; $m = 1,
      \ldots, \max\{k_i, \ell_i\}$};
    \\ \delta'(x_{i,p}')
    & = \delta(x_{i,p})_k' ,\quad \text{ if $i = 1,
      \ldots, N$; $p = 1, \ldots,n_i $;}
    \\ \delta'(x) & = \delta(x), \quad\text{ otherwise.}
  \end{align*}
  It follows from its definition that
  $\delta'$ is a solution modulo $\DRH$ of $\iS_1'$ which is also a
  solution modulo~$\h$ of~$\iS_2'$.
  Since the induction parameter corresponding to the triple $(\iS_1',
  \iS_2', \delta')$ is smaller than the one corresponding to the triple
  $(\iS_1, \iS_2, \delta)$, we may use the induction hypothesis to deduce
  the existence of a continuous homomorphism $\varepsilon':\pseudo {X'}
  S \to (\pseudo AS)^I$ in $\sigma$-words that is both a solution modulo $\DRH$ of
  $\iS_1'$ and a solution modulo $\h$ of $\iS_2'$.

  Finally, we define $\varepsilon$ as follows:
  \begin{align*}
    \varepsilon : \pseudo XS & \to(\pseudo AS)^I
    \\ x_{i,p} & \mapsto \varepsilon'(x_{i,p;1})a_{i,1} \cdots
    \varepsilon'(x_{i,p;k_i}) a_{i,k_i}, \quad \text{ if }
    \ell_i=0;
    \\ x_{i,p} & \mapsto \varepsilon'(x_{i,p;1})a_{i,1} \cdots
    \varepsilon'(x_{i,p;k}) a_{i,k}
    \\ & \quad \cdot \eta(\varepsilon'(x_{i,p;k+1})a_{i,k+1} \cdots
      \varepsilon'(x_{i,p;\ell})a_{i,\ell})\cdot
    \varepsilon'(x_{i,p}'), \quad\text{ if } \ell_i \neq 0;
    \\ x & \mapsto  \varepsilon'(x), \quad\text{ otherwise.}
  \end{align*}
  Then, a straightforward computation shows that $\varepsilon$ plays the
  desired role.
\end{proof}
    
We now state and prove the result claimed at the beginning of the
section.
\begin{thm}\label{sec:11}
  Let $\sigma$ be an implicit operation such that there exists
    $\eta \in \langle \sigma\rangle$ non-explicit, with $\eta = 1$ in $\h$.
  If $\h$ is a $\sigma$-reducible pseudovariety of groups, then $\DRH$ is
  $\sigma$-reducible for idempotent pointlike systems of equations.
\end{thm}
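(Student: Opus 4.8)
The plan is to derive the statement from Proposition~\ref{eq_especiais}. Given a finite system $\iS$ of idempotent pointlike equations together with a solution modulo~$\DRH$, I would split it into a purely pointlike system~$\iS_1$, to be satisfied modulo~$\DRH$, and an auxiliary system~$\iS_2$ expressing that the relevant variables take idempotent values, to be satisfied modulo~$\h$. Proposition~\ref{eq_especiais} then supplies a homomorphism in $\sigma$-words enjoying the corresponding two properties, and the remaining task is to verify that such a homomorphism is automatically a solution modulo~$\DRH$ of~$\iS$.

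In detail: let $\delta\colon\pseudo XS\to(\pseudo AS)^I$ be a solution modulo~$\DRH$ of~$\iS$ respecting the given constraints, which by Remark~\ref{sec:50} we may take to be described by a continuous homomorphism $\varphi\colon(\pseudo AS)^I\to S^I$, with $S$ endowed with a content function, together with a map into~$S^I$. Arguing as at the start of the proof of Theorem~\ref{pointlike}, after merging equations that share variables and treating the classes separately, we may assume $\iS=\{x_{i,1}=\cdots=x_{i,n_i}=x_{i,n_i}^2\}_{i=1}^N$ with pairwise disjoint variable sets. Since $\delta$ solves~$\iS$ modulo~$\DRH$, each $\rho_\DRH(\delta(x_{i,n_i}))$ is an idempotent, hence regular, so $c(\delta(x_{i,n_i}))=\cum{\delta(x_{i,n_i})}$; and as the elements $\delta(x_{i,p})$ with $1\le p\le n_i$ are $\Req$-equivalent modulo~$\DRH$ they share this content (content factors through~$\DRH$, as $\Sl\subseteq\DRH$). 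If all these contents are empty then $\delta(x_{i,p})=I$ for all $i,p$ and $\delta$ itself is a $\sigma$-word solution, so assume otherwise. Now put $\iS_1:=\{x_{i,1}=\cdots=x_{i,n_i}\}_{i=1}^N$ and $\iS_2:=\{x_{i,n_i}=1\}_{i=1}^N$, keeping for each variable its constraint from~$\iS$. Then $\iS_1$ consists only of pointlike equations; by Lemma~\ref{sec:2} (with no words~$w_j$) each equation $x_{i,n_i}=1$ is $\h$-equivalent to a system of graph equations, and adjoining the pointlike equalities via Lemma~\ref{sec:3} and taking disjoint unions of the resulting graphs over~$i$ shows that both $\iS_2$ and $\iS_1\cup\iS_2$ are $\h$-equivalent to systems of graph equations. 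Moreover $\delta$ solves $\iS_1$ modulo~$\DRH$ and, since each $\rho_\DRH(\delta(x_{i,n_i}))$ is idempotent and $\h\subseteq\DRH$, solves $\iS_2$ modulo~$\h$. Proposition~\ref{eq_especiais} then yields a continuous homomorphism $\varepsilon\colon\pseudo XS\to(\pseudo AS)^I$ given by $\sigma$-words, respecting the constraints, that solves $\iS_1$ modulo~$\DRH$ and $\iS_2$ modulo~$\h$; inspecting its construction — which peels off iterated left basic factorizations and encloses the stabilized tail in~$\eta$, exactly as in Theorem~\ref{pointlike} and Remark~\ref{sec:17} — and using that~$\varphi$ has a content function, one may in addition assume $c(\varepsilon(x))=c(\delta(x))$ and $\cum{\varepsilon(x)}=\cum{\delta(x)}$ for every $x\in X$.

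It then remains to check that $\varepsilon$ is a solution modulo~$\DRH$ of~$\iS$, that is, that each $\varepsilon(x_{i,n_i})$ is idempotent modulo~$\DRH$. On the one hand, $\varepsilon$ solving $\iS_2$ modulo~$\h$ forces $\rho_\h(\varepsilon(x_{i,n_i}))$ to be an idempotent of a pro-$\h$ group, hence equal to~$1$. On the other hand $c(\varepsilon(x_{i,n_i}))=c(\delta(x_{i,n_i}))=\cum{\delta(x_{i,n_i})}=\cum{\varepsilon(x_{i,n_i})}$, so $\rho_\DRH(\varepsilon(x_{i,n_i}))$ lies in a regular $\Req$-class; writing $e$ for the idempotent of its $\Heq$-class and applying Proposition~\ref{p:3}, whose map $\psi_e$ has inverse the restriction of~$\rho_\h$, the equality $\rho_\h(\varepsilon(x_{i,n_i}))=1$ gives $\rho_\DRH(\varepsilon(x_{i,n_i}))=\psi_e(1)=e$. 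Hence $\varepsilon(x_{i,n_i})=_\DRH\varepsilon(x_{i,n_i})^2$, and together with $\varepsilon(x_{i,1})=_\DRH\cdots=_\DRH\varepsilon(x_{i,n_i})$ (from $\iS_1$) and the fact that $\varepsilon$ respects the constraints, this shows that $\varepsilon$ is the desired solution of~$\iS$ modulo~$\DRH$ in $\sigma$-words.

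I expect the delicate point to be this last step — upgrading ``$\varepsilon$ solves the pointlike skeleton $\iS_1$ modulo~$\DRH$ and solves $\iS_2$ modulo~$\h$'' to ``$\varepsilon$ solves~$\iS$ modulo~$\DRH$''. It rests on two facts: that regularity in $\pseudo A{DRH}$ is detected by the equality $c=\cum$, so that one must carefully track that the reduction performed in Proposition~\ref{eq_especiais} preserves both contents and cumulative contents; and that the hypothesis $\eta=1$ in~$\h$ is precisely what makes the $\eta$-enclosed stabilized factors produced by that construction collapse modulo~$\h$, so that the graph-equation-equivalent system~$\iS_2$ can enforce $\rho_\h(\varepsilon(x_{i,n_i}))=1$. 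A secondary point to get right is the bookkeeping that $\iS_2$ and $\iS_1\cup\iS_2$ really are $\h$-equivalent to systems of graph equations, for which Lemmas~\ref{sec:2} and~\ref{sec:3} together with disjoint unions of graphs suffice.
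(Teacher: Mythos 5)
Your proof is correct and follows the same basic route as the paper's: split the idempotent pointlike system into a pointlike part $\iS_1$ (to be solved modulo~$\DRH$) and an ``evaluates to $1$'' part $\iS_2$ (to be solved modulo~$\h$), invoke Proposition~\ref{eq_especiais}, and then upgrade the two-sided solution to an actual solution modulo~$\DRH$ of the idempotent pointlike system by checking regularity via $c=\vec c$ and triviality via $\rho_\h=1$. The one genuine difference, worth noting, is organizational. The paper does not apply Proposition~\ref{eq_especiais} to the original variables $x_i$: it first manually peels off $\ell$ iterated left basic factors from $\delta(x_i)$, introduces the factor variables $x_{i,p}$ and $x_i'$, sets $\iS_1$ and $\iS_2$ in \emph{those} variables, applies Proposition~\ref{eq_especiais}, and only then reassembles $\varepsilon(x_i)=\varepsilon'(x_{i,1})a_1\cdots\varepsilon'(x_{i,k})a_k\,\eta(\cdots)\,\varepsilon'(x_i')$. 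This gives the proof explicit, hands-on control over the shape of $\varepsilon(x_i)$, so that $\vec{c}(\varepsilon(x_i))=c(\varepsilon(x_i))$ can be read off directly from the displayed formula. Your version instead applies Proposition~\ref{eq_especiais} directly to $\{x_{i,1}=\cdots=x_{i,n_i}\}$ and $\{x_{i,n_i}=1\}$ and then needs the assertion that the homomorphism it produces satisfies $\vec{c}(\varepsilon(x_{i,n_i}))\supseteq\vec{c}(\delta(x_{i,n_i}))$; this is \emph{not} part of the statement of Proposition~\ref{eq_especiais}, only of its inductive proof, and should be isolated as an explicit addendum (analogous to Remark~\ref{sec:17} for Theorem~\ref{pointlike}) rather than invoked in passing. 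The content of the addendum is exactly the content of the paper's final paragraph: at the top level of the induction $\varepsilon(x_{i,n_i})$ is assembled as a prefix times an $\eta$-enclosed block times $\varepsilon'(x_{i,n_i}')$, where the $\eta$-block has content equal to $\vec c(\delta(x_{i,n_i}))$ by the choice of $k,\ell$ and the content function, and being non-explicit $\eta$ makes that block regular modulo $\DRH$; the inclusion on cumulative content then follows and the rest is as you wrote. In short: correct, same idea, but the paper's extra factorization step is what makes its regularity verification self-contained, whereas your streamlining pushes that verification into an unstated strengthening of Proposition~\ref{eq_especiais}.
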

\begin{proof}
  Let $\iS = \{x_1 = \cdots = x_n = x_n^2\}$ be an idempotent
  pointlike system of equations with constraints on the variables
  given by the pair $(\varphi, \nu)$, and let $\delta: \pseudo
  {\{x_1, \ldots, x_n\}} S \to \pseudo AS$ be a solution modulo $\DRH$
  of $\iS$. Suppose that $\delta(x_i) = u_i$.
  Since idempotents over $\DRH$ are precisely the pseudowords with
  cumulative content coinciding with the content and with value $1$ over~$\h$
  (cf.\ \cite[Corollary~6.1.5]{drh}), $\DRH$ satisfies
  $u_1 = \cdots = u_n = u_n^2 $
  if and only if the following conditions hold:
  \begin{align}\label{eq:51}
    c(u_n)   &= \vec{c}(u_n);\nonumber
    \\   u_n & =_\h 1;
    \\  u_1  &=_\DRH \cdots =_\DRH u_n.\nonumber
  \end{align}
  For each $i \in \{1, \ldots, n\}$ and $m \ge 1$, let $u_i =
  \lbf[1]{u_i}\cdots \lbf[m]{u_i} u_{i,m}'$ and $\lbf[m]{u_i} = u_{i,m}a_m$. Since $S$ is finite,
  there are positive integers $k < \ell$ such that for all $i = 1,
  \ldots, n$ the equality $$\varphi(\lbf[1]{u_i} \cdots \lbf[k]{u_i}) =
  \varphi(\lbf[1]{u_i} \cdots \lbf[\ell]{u_i})$$ holds.
  Take the set of variables $$X = \{x_{i,p} \colon i = 1, \ldots,
  n; \: p = 1, \ldots, \ell\} \uplus \{x_i' \colon i = 1, \ldots, n
  \},$$
  with constraints given by $(\varphi,\nu')$, where $\nu'(x)
  =\varphi(u_{i,p}) $ if $x  = x_{i,p}$, and $\nu'(x) = \varphi(u_{i,k}')$
  if $x = x_i'$.
  We consider the systems of equations  $\iS_1 = \{x_{1,p} = \cdots = x_{n,p}\}_{p = 1}^\ell$ and $\iS_2 =
  \{x_{n,1}a_1 \cdots x_{n,k}a_k x_n' = 1, \: x_1' = \cdots = x_n'\}$.
  Then, the homomorphism
  \begin{align*}
    \delta': \pseudo XS &\to (\pseudo AS)^I
    \\ x_{i,p} &\mapsto u_{i,p}, \quad\text{ for } i = 1, \ldots, n;\: p = 1,
    \ldots, \ell;
    \\ x_i' &\mapsto u_{i,k}', \quad\text{ for } i = 1, \ldots, n;
  \end{align*}
  is a solution modulo $\DRH$ of $\iS_1$ that is also a solution
  modulo $\h$ of $\iS_2$. Besides that, since by Lemma~\ref{sec:2} the
  system $\{x_{n,1}a_1 \cdots x_{n,k}a_k x_n' = 1\}$ is
  $\h$-equivalent to a system of graph equations, Lemma~\ref{sec:3}
  yields that so is $\iS_2$.
  In turn, again Lemma~\ref{sec:3} implies that $\iS_1 \cup \iS_2$ is
  $\h$-equivalent to a system of graph equations.
  Thus, we may invoke Proposition
  \ref{eq_especiais} to derive the existence of a continuous
  homomorphism $\varepsilon': \pseudo XS \to (\pseudo AS)^I$ in
  $\sigma$-words that is a solution modulo $\DRH$ of $\iS_1$, and a
  solution modulo $\h$ of $\iS_2$.

  Now, assuming that $\eta$ is unary, we let
  $\varepsilon: \pseudo {\{x_1, \ldots, x_n\}} S \to \pseudo
  AS$ be given by
  $$\varepsilon(x_i) = \varepsilon'(x_{i,1}) a_1 \cdots
             \varepsilon'(x_{i,k})a_k \cdot \eta(\varepsilon'(x_{i,k+1})
             a_{k+1} \cdots \varepsilon'(x_{i,\ell})a_\ell)
             \varepsilon'(x_i').$$
  It is easily checked that $\DRH$ satisfies $ \varepsilon(x_1) =
  \cdots = \varepsilon(x_n)$, and $\h$ satisfies $\varepsilon(x_n) = 1$.
  Furthermore, by the choice of $k$ and $\ell$, we also know that
  $\varphi(\varepsilon(x_i)) = \varphi(\delta(x_i))$ and, as we are
  assuming that $\eta$ is non-explicit and $S$ has a content function, the equality $\vec{c}(\varepsilon(x_i))
  = c(\varepsilon(x_i))$ holds. So, by \eqref{eq:51}, we may conclude
  that $\varepsilon$ is a
  solution modulo $\DRH$ of $\iS$ in $\sigma$-words that keeps the
  values under~$\varphi$. 
\end{proof}
We observe that, whenever the $\omega$-power belongs to
$\langle\sigma\rangle$, the hypothesis of Theorem~\ref{sec:11} concerning
the implicit signature $\sigma$ is satisfied. That is the case of the
canonical implicit signature~$\kappa$.
Hence, we have the following.
\begin{cor}
  Let $\h$ be a $\kappa$-tame pseudovariety of groups. Then,
  \begin{itemize}
  \item $\DRH$ is $\kappa$-tame with respect to finite systems of
    idempotent pointlike equations (Theorem~\ref{t:7});
  \item $\V \malcev \DRH$ is decidable whenever $\V$ is a decidable
    pseudovariety (Theorem \ref{t:1}).\qed
  \end{itemize}
\end{cor}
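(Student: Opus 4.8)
The plan is to obtain both assertions by assembling the results of Subsection~\ref{sec:1} with Theorem~\ref{sec:11} and Theorem~\ref{t:7}; no new construction is needed. First I would unwind the hypothesis: $\h$ being $\kappa$-tame means precisely that $\h$ is $\kappa$-reducible (with respect to systems of graph equations) and has decidable $\kappa$-word problem. To apply Theorem~\ref{sec:11} with $\sigma = \kappa$ I must exhibit a non-explicit $\eta \in \langle\kappa\rangle$ with $\eta = 1$ in $\h$; the $\omega$-power $x^\omega$ does the job, since it lies in $\langle\kappa\rangle$, is non-explicit, and evaluates to the identity in every finite group and hence in $\h$ (this is exactly the observation recorded just before the corollary).

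Theorem~\ref{sec:11} then yields that $\DRH$ is $\kappa$-reducible for finite systems of idempotent pointlike equations. Combining this with decidability of the $\kappa$-word problem for $\DRH$ --- supplied by Theorem~\ref{t:7} from the corresponding property of $\h$ --- gives, by definition, that $\DRH$ is $\kappa$-tame with respect to finite systems of idempotent pointlike equations. That settles the first item.

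For the second item, let $\iC$ be the class of all finite systems of equations of the form $x_1 = \cdots = x_n = x_n^2$. This class is recursively enumerable and has no parameters, and $\kappa$ is highly computable, so Theorem~\ref{t:5} applies to the $\kappa$-tame pseudovariety $\DRH$ (which is recursively enumerable, $\h$ being so) and delivers $\iC$-decidability of $\DRH$. Feeding this, together with the decidability of an arbitrary pseudovariety $\V$, into Theorem~\ref{t:1} with $\W = \DRH$ shows that $\V \malcev \DRH$ is decidable, as required.

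Since every step is a direct invocation of a cited result, I do not anticipate a genuine obstacle; the only point calling for attention is the verification that $\kappa$ meets the signature hypothesis of Theorem~\ref{sec:11}, which is precisely the $x^\omega = 1$ remark above.
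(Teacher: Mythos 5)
Your proposal is correct and follows exactly the route the paper intends: the remark immediately preceding the corollary supplies the verification that $x^\omega\in\langle\kappa\rangle$ is non-explicit and equals $1$ in $\h$, so Theorem~\ref{sec:11} applies with $\sigma=\kappa$, and the rest is the stated combination with Theorems~\ref{t:7}, \ref{t:5}, and~\ref{t:1}. No discrepancy with the paper's argument.
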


In particular, the pseudovarieties ${\sf DRG}$ and ${\sf DRAb}$ are
both $\kappa$-tame with respect to finite systems of idempotent
pointlike equations and ${\sf DRG}_p$ and ${\sf DRG}_{sol}$ are
$\sigma$-reducible with respect to the same class (recall Theorem
\ref{t:6}).
\newline
\paragraph{\bf Acknowledgments.}  This work is part of the author's
Ph.D. thesis, written under supervision of Professor Jorge Almeida, to
whom the author is deeply grateful.
The work was partially supported by CMUP
(UID/MAT/ 00144/2013), which is funded by FCT (Portugal) with national
(MEC) and European structural funds through the programs FEDER, under
the partnership agreement PT2020 and by  FCT doctoral scholarship
SFRH/BD/ 75977/2011, with national (MEC) and European
structural funds through the program POCH.

\def\cprime{$'$}
\providecommand{\bysame}{\leavevmode\hbox to3em{\hrulefill}\thinspace}
\providecommand{\MR}{\relax\ifhmode\unskip\space\fi MR }
\providecommand{\MRhref}[2]{%
  \href{http://www.ams.org/mathscinet-getitem?mr=#1}{#2}
}
\providecommand{\href}[2]{#2}

\end{document}